\newcommand{\R}{\mathbb R}
\newcommand{\N}{\mathbb{N}}
\newcommand{\Tcal}{\mathcal T}
\newcommand{\E}[1]{\mathbb{E}\left[#1\right]}
\newcommand{\Realpart}[1]{\textnormal{Re}\left(#1\right)}
\renewcommand{\Re}{\textnormal{Re}}
\newcommand{\Prob}[1]{\mathbb{P}\left(#1\right)}
\newcommand{\Var}[1]{\mathrm{Var}\left(#1\right)}
\newcommand{\Expo}[1]{\textnormal{Exp}\left(#1\right)}
\newcommand{\Ind}{\mathbbm{1}}
\newcommand{\IndEvent}[1]{\mathbbm{1}\left(#1\right)}
\newcommand{\fl}[1]{\lfloor #1 \rfloor}
\newcommand{\given}[1]{\ensuremath{\ #1|\ }}
\newcommand{\convdist}{\ensuremath{\overset{d}{\to}}}
\newcommand{\convprob}{\ensuremath{\overset{p}{\to}}}
\newcommand{\eqdist}{\ensuremath{\overset{d}{=}}}
\newtheorem{theorem}{Theorem}
\newtheorem{proposition_restate}[theorem]{Proposition}
\theoremstyle{definition}
\newtheorem{remark}[theorem]{Remark}
\numberwithin{theorem}{section}
\numberwithin{corollary}{section}
\numberwithin{proposition}{section}
\numberwithin{proposition_restate}{section}
\newtheorem{corollary_restate}[theorem]{Corollary}
\numberwithin{corollary_restate}{section}
\newtheorem{lemma_restate}[theorem]{Lemma}
\numberwithin{lemma_restate}{section}
\numberwithin{lemma}{section}
\numberwithin{remark}{section}
\title{The stable trees revisited}
\author{Christina Goldschmidt\footnote{Department of Statistics and Lady Margaret Hall, University of Oxford;\\ \texttt{christina.goldschmidt@stats.ox.ac.uk}} \ and Liam Hill\footnote{Mathematical Institute, University of Oxford; \texttt{liam.hill@magd.ox.ac.uk}}}
\begin{document}
\maketitle
\begin{abstract}
We introduce a new, relatively simple, line-breaking construction of the $\alpha$-stable tree which realises its random finite-dimensional distributions. This is a direct analogue of Aldous' line-breaking construction of the Brownian continuum random tree, which is based on an inhomogeneous Poisson process. Here, we replace the deterministic rate function from the Brownian setting by a random rate process, given by a certain measure-changed $(\alpha-1)$-stable subordinator. Rather than attaching uniformly, the line-segments now connect to locations chosen with probability proportional to the sizes of the jumps of the rate process. 

We also give a new proof of an invariance principle originally due to Duquesne, which states that the family tree of a Bienaym\'e branching process with critical offspring distribution in the domain of attraction of an $\alpha$-stable law (for $\alpha \in (1,2))$, conditioned to have $n$ vertices, converges on rescaling distances appropriately to the $\alpha$-stable tree. Our proof makes use of a discrete line-breaking construction of the branching process tree, which we show converges to our continuous line-breaking construction.
\end{abstract}

\section{Introduction}\label{sec: intro}

\subsection{The $\alpha$-stable trees}

The $\alpha$-stable trees are a family of random $\R$-trees which constitute the scaling limits of the size-conditioned family trees of Bienaym\'e branching processes whose offspring distribution is critical and lies in the domain of attraction of an $\alpha$-stable law, for $\alpha \in (1,2]$. The $2$-stable tree is a (constant multiple of) the Brownian continuum random tree (see Aldous~\cite{aldous1,aldous2,aldous3}); the rest of the family was introduced by Duquesne and Le Gall~\cite{DuquesneLeGall,DuquesneLeGallProbFract}, building on earlier work of Le Gall and Le Jan~\cite{LeGallLeJan}. The focus of this paper is the case $\alpha \in (1,2)$.

Fix $\alpha \in (1,2)$ and let $L = (L_t)_{t \ge 0}$ be a spectrally positive $\alpha$-stable L\'evy process, whose distribution is specified by its Laplace transform
    \[
    \E{\exp(-\lambda L_t)} = \exp(t \lambda^{\alpha}), \quad \lambda \ge 0, t \ge 0.
    \]
The \emph{$\alpha$-stable tree} $(\mathcal{T}_{\alpha}, d_{\alpha}, \mu_{\alpha})$ is a random variable taking values in the space of isometry classes of compact metric measure spaces endowed with the Gromov--Hausdorff--Prokhorov topology. It is usually defined in a somewhat involved fashion using a normalised excursion $\mathbbm{e} = (\mathbbm{e}_t)_{0 \le t \le 1}$ of $L$. Indeed one may derive a continuous process called the \emph{height process} via
    \[
    h_t =  \lim_{\epsilon \to 0} \frac{1}{\epsilon} \int_0^t \IndEvent{\mathbbm{e}_s < \inf_{r \in [s,t]} \mathbbm{e}_r + \epsilon} ds, \quad 0 \le t \le 1,
    \]
where the limit exists in probability. The height process then acts as a contour function for an $\R$-tree. More precisely, we define a pseudo-metric $d^{\circ}_{\alpha}$ on $[0,1]$ via
    \[
    d^{\circ}_{\alpha}(x,y) = h_x + h_y - 2 \inf_{x \wedge y  \le t \le x \vee y} h_t
    \]
and an equivalence relation by declaring $x \sim y$ if $d^{\circ}_{\alpha}(x,y) = 0$. Finally, take $\Tcal_{\alpha}$ to be the quotient space $[0,1]/{\sim}$, which is endowed with the metric $d_{\alpha}$ induced on $\Tcal_{\alpha}$ by $d_{\alpha}^{\circ}$. Take $\mu_{\alpha}$ to be the push-forward of the Lebesgue measure on $[0,1]$ onto $\Tcal_{\alpha}$. (See \cite{LeGall,DuquesneLeGall,duquesnealphalimit} for more details.)

Write $\xi$ for a random variable taking values in $\N_0$. Suppose that $\E{\xi} = 1$ and that
    \[
    \Prob{\xi > k} \sim k^{-\alpha} l(k), \quad \text{as $k \to \infty$,}
    \]
where $\ell:\R^+ \to \R^+$ is a slowly varying function. Then $\xi$ is in the domain of attraction of the $\alpha$-stable law represented by the random variable $L_1$. Let $\xi_1, \xi_2, \ldots$ be i.i.d.\ copies of $\xi$. Then there exists a non-negative increasing sequence $(a_n)_{n \ge 1}$ which is regularly varying of index $1/\alpha$ such that
    \[
    \left(\frac{1}{a_n} \sum_{i=1}^{\fl{nt}}(\xi_i-1), t \ge 0\right) \convdist c (L_t, t \ge 0)
    \]
in the Skorokhod sense for some $c > 0$. By replacing $a_n$ with $ca_n$ for all $n$, we may assume without loss of generality that $c = 1$, and we will work with this specific case throughout. Note that the L\'evy measure of the L\'evy process $L$ is given by $C_{\alpha}x^{-\alpha-1}\,dx, x > 0$, where $C_\alpha = \frac{1}{\Gamma(-\alpha)} = \frac{\alpha(\alpha - 1)}{\Gamma(2-\alpha)}$. 

Let $T^n$ be the rooted ordered tree which represents the genealogy of a branching process with offspring distributed as $\xi$, conditioned to have size $n$. We think of this as a metric space by endowing its vertices $V(T^n)$ with the graph distance, $d^n$, and additionally we let $\mu^n$ be the uniform probability measure on its vertices.  Theorem 3.1 of Duquesne~\cite{duquesnealphalimit} proves the convergence of the height and contour processes of $T^n$ to their continuous analogues (see also Kortchemski~\cite{igorduquesneproof} for a streamlined proof). The following result is a corollary.

\begin{theorem} \label{thm:scalinglimit}
We have
    \[
    \left(V(T^n), \left(\frac{n}{a_n}\right)^{-1} d^n, \mu^n\right) \convdist (\mathcal{T}_{\alpha}, d_{\alpha}, \mu_{\alpha})
    \]
as $n \to \infty$, in the Gromov--Hausdorff--Prokhorov sense.
\end{theorem}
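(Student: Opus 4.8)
The plan is to combine Duquesne's Theorem~3.1~\cite{duquesnealphalimit}, which provides convergence of the contour functions, with the by-now-standard fact that uniform convergence of excursion-type functions entails Gromov--Hausdorff--Prokhorov convergence of the $\R$-trees they code, and then to account for the small discrepancy between the discrete tree $T^n$ and the continuum tree coded by its contour function. Throughout, write $d_{GHP}$ (resp.\ $d_{GH}$) for the Gromov--Hausdorff--Prokhorov (resp.\ Gromov--Hausdorff) distance.

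First I would encode $T^n$ by its contour function $C^n = (C^n_t)_{0 \le t \le 2(n-1)}$, the continuous, piecewise-linear, slope-$\pm 1$ path obtained from a unit-speed depth-first traversal of the ordered tree $T^n$, recording at each time the graph distance to the root. Set $g_n(t) := (a_n/n)\, C^n_{2(n-1)t}$ for $t \in [0,1]$. By (the contour-process part of) Duquesne's Theorem~3.1, $g_n \convdist h$ in the Skorokhod space $\mathbb{D}([0,1])$, and since $h$ is continuous this is the same as convergence in distribution in $(C([0,1]),\|\cdot\|_\infty)$, where $h$ is the continuous height process coding $\mathcal{T}_\alpha$. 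By the Skorokhod representation theorem I may work on a probability space on which $\|g_n - h\|_\infty \to 0$ almost surely. Now recall that for continuous $g \colon [0,1] \to \R_{\ge 0}$ with $g(0) = g(1) = 0$, the measured $\R$-tree $(\mathcal{T}_g, d_g, \mu_g)$ is obtained from the pseudometric $d_g(s,t) = g(s) + g(t) - 2\inf_{s \wedge t \le r \le s \vee t} g(r)$ by quotienting $[0,1]$ by $\{d_g = 0\}$ and pushing forward Lebesgue measure; it is standard that $g \mapsto (\mathcal{T}_g, d_g, \mu_g)$ is continuous for the uniform norm and $d_{GHP}$ — indeed $d_{GHP}(\mathcal{T}_g, \mathcal{T}_{g'})$ is bounded by a constant multiple of $\|g - g'\|_\infty$, since the coding maps furnish a correspondence of small distortion that is compatible with the two identical Lebesgue measures; see, e.g., \cite{LeGall, DuquesneLeGall}. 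Hence, almost surely, $\mathcal{T}_{g_n} \to \mathcal{T}_h = \mathcal{T}_\alpha$ in the GHP sense.

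It remains to compare $\mathcal{T}_{g_n}$ with $(V(T^n), (a_n/n)\, d^n, \mu^n)$. Up to the time-change $t \mapsto 2(n-1)t$, the space $\mathcal{T}_{g_n}$ is the tree coded by $C^n$ with distances multiplied by $a_n/n$, and $\mathcal{T}_{C^n}$ is precisely $T^n$ with each edge realised as a unit-length segment: $V(T^n)$ embeds isometrically and every point of $\mathcal{T}_{C^n}$ lies within graph distance $1$ of a vertex, so after rescaling $d_{GH}((V(T^n), (a_n/n) d^n), \mathcal{T}_{g_n}) \le a_n/n \to 0$. For the measures, $\mu_{C^n}$ is the normalised length measure, assigning mass $1/(n-1)$ uniformly to each of the $n-1$ edges; transporting each point to the child-endpoint of the edge it lies on carries $\mu_{C^n}$ to $\sum_{v \ne \mathrm{root}} \frac{1}{n-1}\delta_v$, which differs from $\mu^n$ in total variation by $2/n$ and moves mass a graph distance at most $1$. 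Consequently the Prokhorov distance, computed in a common isometric embedding into the rescaled tree, between the images of $\mu_{g_n}$ and $\mu^n$ is at most $2/n + a_n/n \to 0$. Combining the two bounds gives $d_{GHP}((V(T^n), (a_n/n) d^n, \mu^n), \mathcal{T}_{g_n}) \to 0$ almost surely; together with $\mathcal{T}_{g_n} \to \mathcal{T}_\alpha$ and the triangle inequality for $d_{GHP}$, and using $(n/a_n)^{-1} = a_n/n$, this yields $(V(T^n), (n/a_n)^{-1} d^n, \mu^n) \to \mathcal{T}_\alpha$ almost surely on the Skorokhod space, and hence in distribution on the original space.

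The only step requiring genuine care is this last comparison: one must check that the push-forward of Lebesgue measure under the contour coding is Prokhorov-close to the uniform measure on the vertex set, and that the $O(1)$ graph-distance discrepancies between the discrete tree and its continuum coding are indeed annihilated by the vanishing rescaling factor $a_n/n$. Everything else is a black-box application of Duquesne's Theorem~3.1 and of the standard continuity of the coding map.
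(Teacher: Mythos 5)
Your argument is correct, and it is essentially the standard deduction that the paper compresses into one sentence when it states \autoref{thm:scalinglimit} as a corollary of Duquesne's Theorem~3.1: contour-process convergence, continuity of the excursion-to-tree coding map for the Gromov--Hausdorff--Prokhorov topology, and the routine comparison between the tree coded by the contour function (carrying the normalised length measure) and the rescaled discrete tree carrying the uniform measure on vertices; your bookkeeping of the $O(a_n/n)$ metric distortion and the $O(1/n)$ total-variation/Prokhorov discrepancy is the right way to close that gap. However, this is not the route the paper itself takes: its own detailed proof, in \autoref{sec: cvgdisc}, is designed precisely to avoid the height and contour processes. There, the theorem is proved by (i) encoding the conditioned Bienaym\'e tree via the discrete line-breaking construction coming from the reverse Pr\"ufer code, (ii) proving convergence of the discrete measure change to the martingale density (\autoref{thm: measurechangeconv}), which yields convergence of the rescaled cumulative size-biased degrees to $\widetilde\sigma$, (iii) deducing convergence of the rescaled cut and attachment times, i.e.\ of the random finite-dimensional distributions, to those of the continuum line-breaking tree $(\mathcal T,d,\mu)$ (\autoref{thm: fdmarginals}), and (iv) a Gromov--Hausdorff tightness estimate adapted from Haas--Miermont (\autoref{lem: HMtailbound} and \autoref{prop: tightnessprop} on the size-biased subtree mass $F_*^n(k)$). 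Your approach buys brevity at the cost of taking Duquesne's functional limit theorem as a black box, and it necessarily passes through the height process; the paper's approach buys a proof independent of the height process which simultaneously identifies the limit with the new line-breaking object and realises the random finite-dimensional distributions of the stable tree. One caveat worth noting: in the paper's route the limit is a priori $(\mathcal T,d,\mu)$, and its identification with the classically defined $(\mathcal T_\alpha,d_\alpha,\mu_\alpha)$ is obtained by comparison with the previously known form of \autoref{thm:scalinglimit}, so the classical argument you spell out is still invoked at that final step --- the two proofs are complementary rather than interchangeable.
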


Haas and Miermont~\cite{haasmiermbt} provide an alternative approach to the proof of \autoref{thm:scalinglimit} in the case where $\Prob{\xi = k} \sim c k^{-\alpha-1}$ for some constant $c > 0$ (see their Theorem 8).

Our aims in this paper are twofold. Firstly, we introduce a novel representation of the $\alpha$-stable tree using a line-breaking construction. Secondly, we use the representation in order to provide a new proof of \autoref{thm:scalinglimit}, which does not make use of the height process.

\subsection{Line-breaking construction}
We begin by introducing what we mean by a line-breaking construction. Suppose that $(y_k)_{k \ge 1}$ and $(z_k)_{k \ge 1}$ in $\R^+$ are sequences such that
    \begin{enumerate}
        \item $(y_k)$ is strictly increasing with $y_k \uparrow \infty$;
        \item $z_k < y_k$ for all $k$.
    \end{enumerate}
We call $(y_k)$ the \emph{cut points} and $(z_k)$ the \emph{attachment points}. Informally, we construct a tree by cutting the half-line $\R^+$ into segments $(y_k, y_{k+1}]$ and inductively gluing the open end of $(y_k, y_{k+1}]$ at position $z_k$. Formally, we define a consistent sequence of metrics on (subsets of) $\R^+$: the metric $d_k^{\circ}$ is defined inductively on $[0, y_k]$ (with $d_0^{\circ}$ the trivial metric on $\{0\}$) by
    \[ d_k^{\circ}(u, v) 
    = \begin{cases}
    d^{\circ}_{k-1}(u, v) & u, v \leq y_{k-1}\\
    d^{\circ}_{k-1}(u, z_{k-1}) + |v - y_{k-1}| & u \leq y_{k-1} < v\\
    |v - u| & u, v > y_{k-1}
    \end{cases}
    \]
for $u < v \leq y_k$. Write $(\mathcal{T}(k), d_k)$ for the space $([0,y_k],d_k^{\circ})$. There is a unique metric $d^{\circ}$ on $\R^+$ that agrees with $d^{\circ}_k$ for all $k \ge 1$; let $(\Tcal,d)$ be the completion of the space $(\R^+, d^{\circ})$.\\

By considering the subspaces $[0, t] \subset \mathcal T$ as $t$ varies over $\mathbb R^+$, we can think of this metric space as growing in time: we continuously grow one branch at a time, and at certain discrete time points $(y_k)$ we switch over to a new branch started from an existing point $(z_k)$.

Now consider a random increasing c\`adl\`ag process $\tau = (\tau_t)_{t \ge 0}$ such that $\tau_t < \infty$ almost surely for each $t \ge 0$ and $\tau_t \to \infty$ as $t \to \infty$. 
We will construct cut points and attachment points using $\tau$. To begin, conditionally given $\tau$, let $(Y_k)_{k \ge 1}$ be the jump times of an inhomogeneous Poisson process on $\R^+$ with intensity $\tau_t \,dt$, arranged in increasing order; these will be the cut points for our tree. To construct the attachment points, we first sample i.i.d.\ uniform random variables $U_1, U_2, \dots$ on $[0, 1]$, independent of everything else so far. Now we may set
    \[ 
    Z_k = \inf\{ t > 0 : \tau_t > U_k \tau_{Y_k-}\}. 
    \]
(This picks out the time of a jump of $\tau$ in $(0,Y_k)$ chosen with probability proportional to its size. Note that, almost surely, $Y_k$ is not a jump-time of the process $\tau$, and so $\tau_{Y_k} = \tau_{Y_k-}$ a.s.) If the cut and attachment points are generated in this way, we say that $(\mathcal{T},d)$ is \emph{generated by a line-breaking construction with intensity process $\tau$}. 
The case $\tau_t = t$, where $Z_k$ is uniformly distributed on $[0, Y_k]$, is precisely Aldous' line-breaking construction of the Brownian CRT \cite{aldous1}.

We now describe the intensity process $\tau$ that we we will use. First, let $(\sigma_t)_{t \ge 0}$ be an increasing L\'evy process with $\E{\exp(-\lambda \sigma_t)} = \exp(-t\alpha \lambda^{\alpha-1})$. Note that this is (a multiple of) an $(\alpha - 1)$-stable subordinator. Let $p: \R \to \R^+$ be the probability density function of the random variable $L_1$. Then \autoref{cor: extendrecipemg} below states that the process $(M_t)_{t \ge 0}$ defined by
    \[
    M_t = \exp\left(\int_0^t \sigma_s ds\right) \frac{p(-\sigma_t)}{p(0)}, \quad t \ge 0,
    \]
is a mean-1 martingale in the natural filtration of the subordinator. We use it as a Radon--Nikodym derivative in order to define a new process $(\widetilde{\sigma}_t)_{0 \le s \le t}$ via change of measure: for any bounded measurable test-function $F: \mathbb{D}([0,t], \R^+) \to \R$, let
    \begin{equation} \label{eqn:tildesigma}
    \E{F((\widetilde{\sigma}_s)_{s \le t})} = \E{M_t F((\sigma_s)_{s \le t})}.
    \end{equation}
By a standard application of Kolmogorov's extension theorem, we deduce the existence of a process $\widetilde \sigma = (\widetilde{\sigma}_t)_{t \ge 0}$. Moreover, $\widetilde{\sigma}$ is an increasing time-homogeneous Markov process (see \autoref{prop:Markovprop} below).

Now consider the $\R$-tree $(\mathcal{T},d)$ arising from the line-breaking construction with intensity process $\widetilde{\sigma}$. In \autoref{sec: lbalphastab}, we give an independent proof of the compactness of $(\Tcal,d)$. We will also want to endow $(\Tcal,d)$ with its ``uniform'' measure. For each $k \ge 1$, define a measure $\mu_k := \frac{1}{k} \sum_{i=1}^k \delta_{Y_i}$ on $(\Tcal,d)$. In \autoref{thm: weakcvglinebreakmeasure}, we show that $\mu_k$ converges as $k \to \infty$ to a limiting measure $\mu$ on $(\Tcal,d)$. 

In \autoref{sec: discrete}, we give a description of a discrete line-breaking-type construction of a conditioned Bienaym\'e tree $T^n$. In \autoref{sec: cvgdisc}, we then prove a version of \autoref{thm:scalinglimit} with $(\Tcal,d,\mu)$ as the limiting space. We may thus identify this limit as the $\alpha$-stable tree:


\begin{theorem}
We have
    \[
    (\mathcal{T},d,\mu) \eqdist (\mathcal{T}_{\alpha}, d_{\alpha},\mu_{\alpha}).
    \]
\end{theorem}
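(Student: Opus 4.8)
The plan is to prove the identity in distribution by establishing that both spaces arise as limits of the same sequence of discrete rescaled trees, namely the size-conditioned Bienaym\'e trees $T^n$ from \autoref{thm:scalinglimit}. We already know from \autoref{thm:scalinglimit} (Duquesne's invariance principle) that $(V(T^n), (n/a_n)^{-1} d^n, \mu^n) \convdist (\Tcal_\alpha, d_\alpha, \mu_\alpha)$ in the Gromov--Hausdorff--Prokhorov topology. The paper also promises, in \autoref{sec: cvgdisc}, a proof that the \emph{same} rescaled trees $T^n$ converge to $(\Tcal, d, \mu)$, via the discrete line-breaking construction of $T^n$ described in \autoref{sec: discrete} and its convergence to the continuous line-breaking construction with intensity process $\widetilde\sigma$. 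Granting both convergences, the result is immediate: limits in distribution in a Polish space (and the space of isometry classes of compact metric measure spaces under GHP is Polish) are unique, so $(\Tcal, d, \mu) \eqdist (\Tcal_\alpha, d_\alpha, \mu_\alpha)$.

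Concretely, I would structure the argument as follows. First, I would recall that the GHP space is Polish, so that weak convergence has unique limits; this is the only ``soft'' ingredient. Second, I would invoke \autoref{thm:scalinglimit} directly for one of the two convergences. Third — and this is where essentially all the content lies — I would carry out the programme announced in Sections~\ref{sec: discrete} and~\ref{sec: cvgdisc}: exhibit a discrete line-breaking construction that produces $T^n$ (cut points given by a discrete analogue of the Poisson process, attachment points chosen proportionally to subtree sizes, with the rate governed by a discretisation of a random walk bridge that converges to the measure-changed subordinator $\widetilde\sigma$), show that its rescaling converges in GHP to the continuous line-breaking tree $(\Tcal, d, \mu)$, and conclude that $(V(T^n), (n/a_n)^{-1}d^n, \mu^n) \convdist (\Tcal, d, \mu)$. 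Matching the two limits finishes the proof.

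The main obstacle is the third step: proving that the discrete line-breaking construction converges to the continuous one. This requires (i) identifying the correct discrete intensity process and showing it converges, after rescaling, to $\widetilde\sigma$ — here the measure change defined in~\eqref{eqn:tildesigma} must emerge naturally from the size-biasing inherent in conditioning the branching process to have $n$ vertices, which is the delicate probabilistic heart of the matter, essentially a local limit theorem / Vervaat-type bridge argument dressed up through the martingale $M_t$; (ii) showing the discrete cut and attachment points converge jointly to $(Y_k)$ and $(Z_k)$; and (iii) upgrading finite-dimensional (reduced-tree) convergence to GHP convergence, which needs the compactness of $(\Tcal, d)$ and the tightness/convergence of the measures $\mu_k \to \mu$ — exactly the results promised in \autoref{sec: lbalphastab}, \autoref{thm: weakcvglinebreakmeasure}, and the compactness statement there. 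A secondary subtlety is checking that the continuous line-breaking construction genuinely realises the finite-dimensional marginals of $\Tcal_\alpha$ (spinal decompositions of the stable tree), so that the identification is not merely up to the discrete approximation but is structurally transparent; however, for the bare identity in distribution claimed here, routing everything through $T^n$ and uniqueness of weak limits suffices.
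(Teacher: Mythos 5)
Your proposal is correct and is essentially the paper's own argument: the paper proves in \autoref{sec: cvgdisc} that the rescaled conditioned Bienaym\'e trees converge in the Gromov--Hausdorff--Prokhorov sense to $(\mathcal{T},d,\mu)$, and the identification with $(\mathcal{T}_\alpha,d_\alpha,\mu_\alpha)$ then follows by comparing with Duquesne's \autoref{thm:scalinglimit} and uniqueness of weak limits. Your outline of where the real work lies (convergence of the size-biased degree process to $\widetilde\sigma$ via the measure change, joint convergence of cut and attachment points, and upgrading to GHP via compactness, the mass measure, and tightness) matches the structure of Sections~\ref{sec: lbalphastab}--\ref{sec: cvgdisc}.
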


Along the way, we show that $(\mathcal{T}(k))_{k \ge 1}$ realises the \emph{random finite-dimensional distributions} of the $\alpha$-stable tree. That is, for each $k \ge 1$, $\mathcal{T}(k)$ has the same distribution as the subtree of $\Tcal_{\alpha}$ spanned by the root and $k$ vertices chosen independently according to the measure $\mu_{\alpha}$. (As shown by Aldous~\cite{aldous3}, the random finite-dimensional distributions characterise the law of any continuum random tree.)

As we show in \autoref{sec: lbalphastab}, an analogue of the change of measure (\ref{eqn:tildesigma}) works for a much broader class of L\'evy process/subordinator pairs. In view of this, we conjecture that our construction should generalise to other L\'evy trees, at least under certain regularity conditions. 

\subsection{Discussion}

Line-breaking constructions of random $\R$-trees date right back to Aldous' first paper on the Brownian continuum random tree~\cite{aldous1}. That construction was generalised by Aldous and Pitman~\cite{aldouspitmanicrt} to give the family of \emph{inhomogeneous continuum random trees} (ICRTs), which were further studied in \cite{camarripitmanicrt,aldouspitmanicrt2,aldousicrtexplore,abrcompactness,abrfennec}. This family is parametrised by a sequence $\theta = (\theta_i)_{i \geq 0}$ such that $\theta_1 \geq \theta_2 \geq \dots \ge 0$, $\sum_{i \geq 0}\theta_i^2 = 1$ and either $\theta_0 \neq 0$ or $\sum_{i = 1}\theta_i = \infty$.  For $i \ge 1$, let $E_i \sim \mathrm{Exp}(\theta_i)$ and then set
    \[
    \tau_t = \theta_0^2 t + \sum_{i=1}^{\infty} \theta_i \IndEvent{E_i \le t}, \quad t \ge 0.
    \]
Then use $(\tau_t)_{t \ge 0}$ as the intensity process in a line-breaking construction. (This succinct formulation, which is due to Blanc-Renaudie~\cite{abrcompactness}, is equivalent to the  original formulation of \cite{aldouspitmanicrt}.) The ICRTs were shown by Camarri and Pitman~\cite{camarripitmanicrt} to be the scaling limits of a family of discrete models called $p$-trees. We will find it helpful in the sequel to relax the condition $\sum_{i \ge 0} \theta_i^2 = 1$ to the condition $\|\theta\|_2 < \infty$, the effect being simply a scaling of all edge-lengths in the tree by $\|\theta\|_2$. Let us write $(\Tcal^{\theta},d^{\theta},\mu^{\theta})$ for the ICRT with parameter $\theta$. Blanc-Renaudie~\cite{abrcompactness} proves that $(\Tcal^{\theta},d^{\theta})$ is a.s.\ compact if and only if
    \[ 
    \int_{R}^\infty \frac{1}{t\E{\tau_t}}\,dt < \infty \text{ for some } R > 0.
    \]

Other line-breaking constructions with general lengths and uniform attachment mechanism have been studied in \cite{AminiDevroyeGriffithsOlver,CurienHaas,Haas}.

A first, more complicated, line-breaking construction for the $\alpha$-stable trees was given by the first author with B\'en\'edicte Haas in \cite{lbcstable} in 2015. This construction is not based on an inhomogeneous Poisson point process, but rather on an increasing Markov chain, $(M_k)_{k \ge 1}$. In order to describe it, we need first to introduce the two-parameter family of \emph{generalised Mittag-Leffler distributions}, $\mathrm{ML}(\theta,\beta)$, for $\beta \in (0,1)$ and $\theta > -\beta$. If $X \sim \mathrm{ML}(\theta,\beta)$ then $X$ takes values in $\R^+$ and has distribution determined by its moments, 
    \[
    \E{X^n} = \frac{\Gamma(\theta)\Gamma(\theta/\beta+n)}{\Gamma(\theta/\beta) \Gamma(\theta + n\beta)}, \quad n \ge 1.
    \]
The Markov chain $(M_k)_{k \ge 1}$ has marginal distributions $M_k \sim \mathrm{ML}(1-1/\alpha,k-1/\alpha)$ and is most easily described via its \emph{backward} transitions, which are such that, for $k \ge 1$, $M_k = M_{k+1} \cdot \beta_k$, where $\beta_k$ is independent of $M_{k+1}$ and 
    \[
    \beta_k \sim \mathrm{Beta}\left(\frac{(k+1)\alpha -2}{\alpha-1}, \frac{1}{\alpha-1} \right).
    \]
These two properties turn out to completely characterise its distribution.

Now let $B_2, B_3, \ldots$ be i.i.d.\ $\mathrm{Beta}(1,(2-\alpha)/(\alpha-1))$ random variables, independent of $(M_k)_{k \ge 1}$. The line-breaking construction generates an increasing sequence of trees $(\widetilde{\Tcal}(k))_{k \ge 1}$ as we now describe.

Initialise by setting $L_1 = M_1$, and letting $\widetilde{\Tcal}(1)$ be the tree consisting of a line-segment of length $L_1$ (rooted at one end). 

Then, for $k \ge 1$, given $\widetilde{\Tcal}(k)$, at step $k+1$, we attach a line-segment of length $(M_{k+1} - M_k) \cdot B_{k+1}$ onto $\widetilde{\Tcal}(k)$ at a point to be determined as follows:
\begin{itemize}
    \item We choose to attach at a pre-existing branch-point of $\widetilde{T}(k)$ with probability $(M_k-L_k)/M_k$. Conditionally on being in this case, we pick such a branch-point of degree $d \ge 3$ with probability proportional to $d-1-\alpha$.
    \item Otherwise, with probability $L_k/M_k$, we attach at a point chosen  according to the normalised Lebesgue measure on $\widetilde{\Tcal}(k)$.
\end{itemize}
(A second version of the construction was also given in \cite{lbcstable} but since it does not apparently reveal any extra information about the stable tree, we do not describe it here. See also Rembart and Winkel~\cite{RembartWinkel} for an alternative perspective.)

In \cite{lbcstable}, it is proved that $(\widetilde{\Tcal}(k))_{k \ge 1}$ realises the random finite-dimensional distributions of the stable tree. Therefore, the construction makes manifest the distributions of various distances in the tree (see Proposition~1.6 of \cite{lbcstable} for more details). We are able to recover these facts using our new construction, although it requires some work, for example:

\begin{restatable}{proposition_restate}{firstcuttimelaw}\label{prop: firstcuttimelaw}
    The first cut time $Y_1$ of the line-breaking construction with intensity $\widetilde \sigma$ is such that $\alpha Y_1 \sim \mathrm{ML}\left(1-1/\alpha, 1 - 1/\alpha\right)$.
\end{restatable}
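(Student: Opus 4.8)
The plan is to compute the Laplace transform (or equivalently, the moment sequence) of $\alpha Y_1$ directly from the line-breaking recipe and recognise it as that of $\mathrm{ML}(1-1/\alpha, 1-1/\alpha)$. Conditionally given $\widetilde\sigma$, the first cut time $Y_1$ is the first point of an inhomogeneous Poisson process of intensity $\widetilde\sigma_t\,dt$, so $\Prob{Y_1 > t \mid \widetilde\sigma} = \exp(-\int_0^t \widetilde\sigma_s\,ds)$, and hence
\[
\Prob{Y_1 > t} = \E{\exp\left(-\int_0^t \widetilde\sigma_s\,ds\right)}.
\]
Now I would unwind the change of measure (\ref{eqn:tildesigma}): applying it with the test function $F((\widetilde\sigma_s)_{s\le t}) = \exp(-\int_0^t \widetilde\sigma_s\,ds)$ gives
\[
\Prob{Y_1 > t} = \E{M_t \exp\left(-\int_0^t \sigma_s\,ds\right)} = \E{\frac{p(-\sigma_t)}{p(0)}},
\]
since the martingale $M_t$ contains exactly the factor $\exp(\int_0^t \sigma_s\,ds)$ which cancels. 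So the whole problem reduces to understanding $\E{p(-\sigma_t)}$, where $\sigma$ is the $(\alpha-1)$-stable subordinator with $\E{e^{-\lambda\sigma_t}} = e^{-t\alpha\lambda^{\alpha-1}}$ and $p$ is the density of the spectrally positive $\alpha$-stable variable $L_1$.

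The key step is to evaluate $\E{p(-\sigma_t)}$. I would do this by writing $p(-x) = \frac{1}{2\pi}\int_{\R} e^{-i\lambda x} \hat p_{-}(\lambda)\,d\lambda$ using the characteristic function of $L_1$ — or, more robustly, exploit the scaling $\sigma_t \overset{d}{=} (\alpha t)^{1/(\alpha-1)}\sigma_1'$ to reduce to a one-dimensional integral in $t$ after a change of variables, so that $\Prob{Y_1 > t}$ becomes an explicit function of $t$. The cleanest route is probably to recall that $-\sigma_t$ and $L_1$ are related through the known identity expressing the density of a spectrally positive stable variable as a mixture/integral against a stable subordinator density (this is the Zolotarev-type relation between the two stable densities with indices $\alpha$ and $\alpha-1$); this should collapse $\E{p(-\sigma_t)}$ into a clean closed form. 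From $\Prob{Y_1 > t}$ one then reads off $\Prob{\alpha Y_1 > u}$, differentiates to get the density, and computes $\E{(\alpha Y_1)^n} = \int_0^\infty n u^{n-1}\Prob{\alpha Y_1 > u}\,du$; the claim is that this equals $\frac{\Gamma(1-1/\alpha)\Gamma(1+n)}{\Gamma(1)\Gamma(1-1/\alpha + n(1-1/\alpha))} = \frac{n!\,\Gamma(1-1/\alpha)}{\Gamma((n+1)(1-1/\alpha))}$, which determines the $\mathrm{ML}(1-1/\alpha,1-1/\alpha)$ law uniquely by the moment characterisation given in the excerpt.

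I expect the main obstacle to be the identification of $\E{p(-\sigma_t)}$ in closed form. Two things could go wrong: first, one must be careful that the integral defining $M_t$ is finite and the change of measure legitimately applies to the (unbounded) test function $\exp(-\int_0^t\widetilde\sigma_s\,ds)$ — this needs a truncation/monotone convergence argument, though it is routine since the test function is bounded by $1$. Second, and more substantively, pinning down the precise constant in the stable-density identity relating indices $\alpha$ and $\alpha-1$ requires care with normalisations (the factor $\alpha$ in the subordinator Laplace exponent, the constant $C_\alpha = 1/\Gamma(-\alpha)$, and the choice $c=1$ are all doing work here). An alternative that sidesteps the special-function identity is to verify the moment recursion directly: show $\E{(\alpha Y_1)^n}$ satisfies the same recursion in $n$ as the Mittag-Leffler moments, using a Fubini argument on $\int_0^\infty nu^{n-1}\E{p(-\sigma_{u/\alpha})}\,du$ together with the scaling property of $\sigma$ and the fact that $\int_\R p(-x)\,x^{\text{(power)}}\,dx$ relates to negative moments of $L_1$, which are themselves computable from $\E{e^{-\lambda L_1}} = e^{-\lambda^\alpha}$ via $\E{L_1^{-s}} = \frac{1}{\Gamma(s)}\int_0^\infty \lambda^{s-1}e^{-\lambda^\alpha}\,d\lambda$. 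I would carry out the direct computation first and fall back on the moment-recursion argument if the constants prove unwieldy.
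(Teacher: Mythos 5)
Your opening steps coincide exactly with the paper's: conditionally on $\widetilde\sigma$ one has $\Prob{Y_1 > t \given \widetilde\sigma} = \exp(-\int_0^t \widetilde\sigma_s\,ds)$, the change of measure (applied to a test function bounded by $1$, so no truncation is needed) gives $\Prob{Y_1>t} = \E{p(-\sigma_t)}/p(0)$, and the paper likewise concludes by matching moments against the $\mathrm{ML}(1-1/\alpha,1-1/\alpha)$ moment formula via Carleman's condition. The gap is that everything after this reduction --- which is where essentially all of the work in the paper's proof lies --- is left as a hope. There is no off-the-shelf ``Zolotarev-type'' identity linking the stable densities of indices $\alpha$ and $\alpha-1$ that collapses $\E{p(-\sigma_t)}$ into a clean closed form in $t$; the classical duality (Sato (14.41)) relates $\alpha$ to $1/\alpha$ and is not what is needed here, and since $\alpha Y_1$ is generalised Mittag--Leffler its tail is not elementary, so no explicit formula in $t$ should be expected. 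What actually makes the computation work in the paper is the structural relation $\Psi_\sigma(\lambda) = -i\Psi_L'(\lambda)$, i.e.\ $\Psi_{\alpha-1}(\lambda) = \alpha(-\Psi_\alpha(\lambda))^{1-1/\alpha}$ and $-i\Psi_{\alpha-1}(\lambda) = \tfrac{d}{d\lambda}(-\Psi_\alpha(\lambda))$: one integrates over $t$ first (with an $e^{-\varepsilon t}$ regularisation and Fubini), arrives at $\E{Y_1^k} = \frac{\Gamma(k+1)}{2\pi p(0)}\int_\R \Psi_{\alpha-1}(\lambda)^{-k}\bigl(e^{-\Psi_\alpha(\lambda)} - \text{Taylor polynomial}\bigr)\,d\lambda$ after subtracting enough terms of the exponential to make the integrand integrable, then substitutes $z=-\Psi_\alpha(\lambda)$ to obtain a Hankel-type contour integral evaluated by the Gamma reflection formula, and finally eliminates the unknown constant $p(0)$ by setting $k=0$. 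None of this is supplied in your proposal, and it is the heart of the matter.

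Moreover, the fallback you offer is incorrect as stated: the identity $\E{L_1^{-s}} = \frac{1}{\Gamma(s)}\int_0^\infty \lambda^{s-1}e^{-\lambda^\alpha}\,d\lambda$ fails because $L_1$ is spectrally positive but not positive --- it takes negative values with positive probability, so the Mellin/Fubini exchange breaks down (the inner integral $\int_0^\infty \lambda^{s-1}e^{-\lambda L_1}\,d\lambda$ is infinite on $\{L_1 \le 0\}$). After your scaling reduction $\sigma_t \eqdist (\alpha t)^{1/(\alpha-1)}S$ the quantity you genuinely need is the Mellin transform of $p$ on the negative half-line, $\int_0^\infty x^{s-1}p(-x)\,dx$ (negative moments of the positive $(\alpha-1)$-stable $S$ are fine and classical); computing that transform with the correct constants is precisely the nontrivial step, and in the paper it is exactly what the contour integral delivers. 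So your strategy is the right one and matches the paper's, but the proof is missing its central computation, and the one explicit formula you propose to supply it with is false.
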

The proof is long but fairly straightforward, so we defer it to the appendix. 

The use of line-breaking constructions has recently acquired new impetus, beginning with work of Addario-Berry, Blanc-Renaudie, Donderwinkel, Maazoun and Martin~\cite{surveypaper}, who gave a survey of analogous \emph{discrete} constructions for uniform random trees with a given degree sequence. We describe a particular instance of this in detail in \autoref{sec: discrete} below, and with randomised degrees it will play a key role in our proof of \autoref{thm:scalinglimit}. (It has also recently been used to study various aspects of random trees with given degrees in \cite{abrfixeddegrees,ABDrandomtrees,snakes,ABDKcriticaltrees}.)

It is possible to consider mixtures of ICRTs by randomising the parameter $\theta$: that is, we first sample $\theta$ according to some distribution and then, conditionally given $\theta$, we sample an ICRT with parameter $\theta$. (In this context, we do not wish to constrain $\|\theta\|_2$ to take the value 1.) Aldous, Pitman and Miermont~\cite{aldousicrtexplore} conjectured that the L\'evy trees of Duquesne and Le Gall~\cite{DuquesneLeGall} (which include the stable trees) are mixtures of ICRTs. This has recently been shown by Blanc-Renaudie in \cite{abrfixeddegrees} for L\'evy trees which are Gromov--Prokhorov limits of Bienaym\'e trees.  The corresponding statement in the case of the stable trees was proved by Wang~\cite{minmin} in the following more refined form, which identifies $\theta$ as the sequence of jumps of an $\alpha$-stable excursion.

\begin{theorem}[Wang~\cite{minmin}]
Fix $\alpha \in (1,2)$ and let $(\Theta_i, i \ge 1)$ be the ordered sequence of jumps of the normalised $\alpha$-stable excursion $\mathbbm{e}$. Then 
    \[
    (\Tcal_{\alpha},d_{\alpha},\mu_{\alpha}) \eqdist (\Tcal^{\Theta},d^{\Theta},\mu^{\Theta}).
    \]
\end{theorem}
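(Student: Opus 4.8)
Here is a plan for proving that $(\Tcal_\alpha,d_\alpha,\mu_\alpha) \eqdist (\Tcal^\Theta,d^\Theta,\mu^\Theta)$.

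The plan is to realise both sides of the identity as Gromov--Hausdorff--Prokhorov scaling limits of the \emph{same} discrete object, the conditioned Bienaym\'e tree $T^n$, with the mixing over $\Theta$ taking place at the level of the degree sequence. By Aldous' characterisation of continuum random trees through their random finite-dimensional distributions, it suffices to show that for each $k \ge 1$ the reduced tree $R_k(\Tcal_\alpha)$ spanned by the root and $k$ i.i.d.\ $\mu_\alpha$-points has the same law as the tree obtained by first sampling $\Theta$ and then, conditionally on $\Theta$, sampling $R_k(\Tcal^\Theta)$. I will combine two inputs: \autoref{thm:scalinglimit}, which gives $(V(T^n), (a_n/n) d^n, \mu^n) \convdist (\Tcal_\alpha, d_\alpha, \mu_\alpha)$; and the scaling-limit theorem for uniformly random plane trees with a prescribed degree sequence (the result of Camarri--Pitman~\cite{camarripitmanicrt} for $p$-trees, and its extension to uniform trees with a prescribed degree sequence), which says that if a degree sequence $\mathbf{d}^n$ has ranked ``large'' out-degrees which, rescaled by $a_n$, converge to a sequence $\theta$ with $\theta_0 = 0$, $\|\theta\|_2 < \infty$ and $\sum_i \theta_i = \infty$, and if moreover $\sum_v (d^n_v/a_n)^2 \to \|\theta\|_2^2$, then the uniform plane tree with degree sequence $\mathbf{d}^n$, with distances rescaled by $a_n/n$, converges in the Gromov--Hausdorff--Prokhorov sense to $(\Tcal^\theta, d^\theta, \mu^\theta)$ (in the relaxed ICRT normalisation of the introduction, for which edge lengths carry the factor $\|\theta\|_2$).

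\emph{Step 1 (degrees).} Condition $T^n$ on the multiset of its out-degrees $\mathbf{d}^n$; it is then a uniformly random plane tree with that degree sequence. The {\L}ukasiewicz path of $T^n$ is a random walk with step distribution $\xi - 1$ conditioned to first hit $-1$ at time $n$, so by the usual invariance principle its rescaling by $a_n$ converges to the normalised $\alpha$-stable excursion $\mathbbm{e}$, and the jumps of this path are precisely the out-degrees minus one. Hence the ranked large out-degrees of $T^n$, rescaled by $a_n$, converge --- jointly with $T^n$ as a metric measure space, since both are measurable functions of the {\L}ukasiewicz path --- to the ranked jumps $\Theta$ of $\mathbbm{e}$. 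Since $\Prob{\xi > k} \sim k^{-\alpha}\ell(k)$ with $\alpha \in (1,2)$, the small out-degrees contribute nothing to the rescaled $\ell^2$-mass, so $\sum_v (d^n_v/a_n)^2 \convprob \|\Theta\|_2^2$ and there is no Brownian ($\theta_0$) component; and because the jumps of an $\alpha$-stable process on $[0,1]$ satisfy $\sum_i \Theta_i^p < \infty$ a.s.\ exactly when $p > \alpha$, and $\alpha \in (1,2)$, the sequence $\Theta$ a.s.\ satisfies $\|\Theta\|_2 < \infty$ and $\sum_i \Theta_i = \infty$, so $\Tcal^\Theta$ is well defined.

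\emph{Step 2 (conditional limit and conclusion).} Apply the degree-sequence scaling limit with the \emph{random} degree sequence $\mathbf{d}^n$ of Step 1. Working in a Skorokhod coupling in which $\theta(\mathbf{d}^n) \to \Theta$ and $\sum_v (d^n_v/a_n)^2 \to \|\Theta\|_2^2$ almost surely --- where $\theta(\mathbf{d}^n)$ denotes the ranked rescaled large out-degrees --- together with continuity of the ICRT in its parameter for the $\ell^2_\downarrow$-topology (see e.g.\ \cite{abrcompactness}), we obtain that the conditional law of $(V(T^n), (a_n/n) d^n, \mu^n)$ given $\mathbf{d}^n$ converges to the law of $\Tcal^\Theta$; equivalently, $\mathrm{Law}(T^n \text{ rescaled}) = \mathbb{E}_{\mathbf{d}^n}[\mathrm{Law}(\text{uniform}(\mathbf{d}^n))]$ converges to the mixture $\mathbb{E}_\Theta[\mathrm{Law}(\Tcal^\Theta)]$. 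By \autoref{thm:scalinglimit} this same sequence of laws converges to $\mathrm{Law}(\Tcal_\alpha, d_\alpha, \mu_\alpha)$, and identifying the two limits gives $(\Tcal_\alpha, d_\alpha, \mu_\alpha) \eqdist (\Tcal^\Theta, d^\Theta, \mu^\Theta)$, as claimed. (If one prefers to avoid GHP technicalities, the same argument run with the $k$-leaf reduced trees $R_k$ --- for which the degree-sequence limit is a convergence of laws on finite edge-weighted marked trees, making the coupling transparent --- establishes the identity at the level of random finite-dimensional distributions, which by Aldous' theorem is enough.)

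I expect the main obstacle to be the passage in Step 2 from the degree-sequence scaling limit for a fixed converging degree sequence to the version with a random degree sequence converging to $\Theta$: this needs the scaling limit to hold with enough uniformity and continuity in the parameter --- in particular for the Gromov--Hausdorff--\emph{Prokhorov} topology, so as to control the mass measure --- and it needs a careful verification that the long-range effect of the many small degrees is asymptotically only the scaling factor $\|\Theta\|_2$, with no diffuse contribution. An alternative more in the spirit of the present paper would be to show directly that the intensity process $\widetilde\sigma$ has the same law as the random intensity obtained by first sampling $\Theta$ and then running the ICRT recipe, namely $\sum_i \Theta_i \IndEvent{E_i \le t}$ with the $E_i$ conditionally independent given $\Theta$ and $E_i \sim \Expo{\Theta_i}$; this would be a clean statement purely about one-dimensional increasing processes, but proving it appears to require essentially the same understanding of the jump structure of the measure-changed $(\alpha-1)$-stable subordinator that is the heart of the matter.
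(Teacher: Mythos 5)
You should first note that the paper does not actually prove this theorem: it is quoted from Wang~\cite{minmin}, and the paper's own contribution in this direction is \autoref{prop: sigmatildeidentified}, which shows that the intensity $\widetilde\sigma$ has the same law as $\sum_{i\ge 1}\Theta_i\IndEvent{E_i\le \Theta_i t}$, so that the line-breaking construction of Sections 2--4 and Wang's mixture of ICRTs are two descriptions of the same object. In other words, the ``alternative'' you sketch in your final paragraph --- identify $\widetilde\sigma$ directly with the randomised ICRT intensity built from $\Theta$ --- is precisely the route the paper takes (via \autoref{cor: cumuldeg}, the $\ell^2$ convergence of ranked rescaled degrees in \autoref{prop: ell2conv} using Aldous--Limi\'c size-biased point processes and the square-summability of the jumps from \autoref{prop: quadraticvarfinite}, and the identification of the limiting jumps with the excursion jumps through the \L{}ukasiewicz path). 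Your main plan is a genuinely different route: condition $T^n$ on its out-degree multiset so that it is uniform with that degree sequence, show the ranked rescaled degrees converge to $\Theta$, invoke the scaling limit of uniform trees with a prescribed degree sequence towards ICRTs, and match the annealed limit with \autoref{thm:scalinglimit}. This is viable and is close in spirit to how the mixture representation is approached in \cite{minmin} and \cite{abrfixeddegrees}; what it buys is that it never mentions $\widetilde\sigma$, at the price of importing heavy external machinery, whereas the paper's route stays entirely inside its own construction.

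The places where your plan still owes real work are the following. First, the input you attribute to Camarri--Pitman~\cite{camarripitmanicrt} is a statement about $p$-trees; what you actually need is the convergence of uniform trees with a given degree sequence to ICRTs (Broutin--Marckert in the finite-variance regime, Blanc-Renaudie~\cite{abrfixeddegrees} in general), and at the Gromov--Hausdorff--Prokhorov level under exactly your hypotheses this is delicate --- indeed the paper remarks that the general L\'evy case of the mixture statement is still awaited in \cite{abrfixeddegrees}. Your fallback through $k$-leaf reduced trees and Aldous' fdd characterisation is the right way to defuse this, since Gromov--Prokhorov/fdd convergence suffices for an identity in law. Second, the claim that $\sum_v (d^n_v/a_n)^2$ converges to $\|\Theta\|_2^2$ must be proved \emph{under the conditioning} $\sum_i\xi_i=n-1$, not just for i.i.d.\ degrees; this needs a local-limit/absolute-continuity argument, and the paper's analogue (\autoref{prop: ell2conv}) takes genuine effort. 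Third, passing from deterministic converging degree sequences to the random $\mathbf d^n$ requires continuity of $\theta\mapsto\Tcal^\theta$ in the relevant topology, which you correctly flag and which is available in \cite{abrcompactness,abrfixeddegrees}. None of these is fatal, but each is a substantive step rather than a citation, so as written the proposal is a correct plan along a different route rather than a complete proof.
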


As we show below in \autoref{prop: sigmatildeidentified}, if $(\Delta_i, i \ge 1)$ is the ordered sequence of jumps of $\widetilde{\sigma}$ then
    \[
    (\Delta_i, i \ge 1) \eqdist (\Theta_i, i \ge 1)
    \]
and, moreover,
    \[
    (\widetilde{\sigma}_t)_{t \ge 0} \eqdist \left(\sum_{i \ge 1} \Theta_i \IndEvent{E_i \le t}\right)_{t \ge 0},
    \]
where, conditionally on $\Theta$, $E_i \sim \mathrm{Exp}(\Theta_i)$ independently for all $i \ge 1$. So our approach and Wang's give two different perspectives on the same construction, although this does not seem obvious a priori.

\section{A line-breaking construction of the $\alpha$-stable tree} \label{sec: lbalphastab}

In this section, we first establish the validity of a more general version of the change of measure (\ref{eqn:tildesigma}). We then prove various properties of $\widetilde{\sigma}$ and use them to show that the line-breaking construction with intensity process $\widetilde \sigma$ yields a compact $\R$-tree. Finally, we study the mass measure on the tree.

\subsection{A measure-changed subordinator}

\begin{restatable}{theorem}{recipemg}\label{thm: recipemg}
    Let $a, b \geq 0$, and let $\nu$ be a measure on $\R^+$ such that $\int_{\R^+}(x \wedge x^2)\,\nu(dx) < \infty$. Let $L = (L_t)_{t \ge 0}$ be a spectrally positive L\'evy process on $\R$ with characteristic exponent
        \[ 
        \Psi_L(\lambda) = ai\lambda  + \frac 12 b^2\lambda^2 - \int_0^\infty (e^{i\lambda x} - 1 - i\lambda x)\,\nu(dx),\]
    so that $\E{e^{i\lambda L_t}} = e^{-t \Psi_L(\lambda)}$. Let $(\sigma_t)_{t \ge 0}$ be a (killed) subordinator, not necessarily defined on the same probability space, with characteristic exponent
        \[ \Psi_\sigma(\lambda) = a - b^2i\lambda - \int_0^\infty (e^{i\lambda x} - 1)\,x\nu(dx) = -i\Psi_L'(\lambda).\]
    Assume that
    \begin{enumerate}[(i)]
        \item $\int_0^\infty (e^{ux} - 1 - ux)\,\nu(dx) < \infty$ holds for all $u \in \R$;
        \item $\Realpart{\Psi_L(\lambda)} \gg |\lambda|$ as $|\lambda| \to \infty$.
    \end{enumerate}
    Then $L_1$ has a density $p$, and the process $M^{\nu, a, b}_t := \exp(\int_0^t \sigma_s\,ds)p(-\sigma_t)\IndEvent{\sigma_t < \infty}$ is a non-negative martingale (in the natural filtration associated to $\sigma$).
\end{restatable}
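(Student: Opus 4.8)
The plan is to reduce the claim to a single analytic identity by conditioning, and then to prove that identity by combining Fourier inversion for the density $p$ with the exponential formula for Poisson point processes.

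\emph{Step 1: reduction via the Markov property.} Non-negativity is immediate since $p \ge 0$; abbreviate $M_t = M_t^{\nu,a,b}$. Fix $0 \le s \le t$ and condition on the natural filtration $(\mathcal F_u)$ of $\sigma$. On the event $\{\sigma_s = \infty\}$ both $M_s$ and $M_t$ vanish, so we may work on $\{\sigma_s = x\}$ with $x < \infty$, where, by the Markov property of the (killed) subordinator, $(\sigma_{s+v} - \sigma_s)_{v \ge 0}$ is an independent copy $\sigma'$ of $\sigma$ and $\{\sigma_t < \infty\} = \{\sigma'_{t-s} < \infty\}$. Combining this with the (Fubini) identity $\int_0^t \sigma_u\,du = \int_0^s \sigma_u\,du + (t-s)\sigma_s + \int_0^{t-s}\sigma'_v\,dv$, one checks that $\E{M_t \mid \mathcal F_s} = M_s$ is equivalent to
\[
    g(x,r) \;:=\; \E{\exp\left(\int_0^r \sigma_v\,dv\right) p(-x-\sigma_r)\,\IndEvent{\sigma_r < \infty}} \;=\; e^{-rx}\,p(-x)
\]
for all $x \in \R$ and $r \ge 0$ (with $r = t - s$). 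The choice $s = 0$ here also gives $\E{M_t} = p(0) = M_0 < \infty$, so integrability follows for free once the identity and the Fubini interchanges below are justified.

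\emph{Step 2: Fourier inversion.} Assumption (i) makes $\Psi_L$ an entire function, and assumption (ii) yields $|e^{-\Psi_L(\lambda)}| = e^{-\Realpart{\Psi_L(\lambda)}} \le e^{-c|\lambda|}$ for $|\lambda|$ large, so $e^{-\Psi_L} \in L^1(\R)$ and $L_1$ has the bounded continuous density $p(y) = \frac{1}{2\pi}\int_\R e^{-i\lambda y}\,e^{-\Psi_L(\lambda)}\,d\lambda$. Substituting this with $y = -x - \sigma_r$ into the definition of $g(x,r)$ and interchanging the $\lambda$-integral with the expectation gives
\[
    g(x,r) = \frac{1}{2\pi}\int_\R e^{i\lambda x}\,e^{-\Psi_L(\lambda)}\;\E{\exp\left(\int_0^r \sigma_v\,dv + i\lambda\sigma_r\right)\IndEvent{\sigma_r < \infty}}\,d\lambda,
\]
the interchange being legitimate because $\int_\R |e^{-\Psi_L(\lambda)}|\,d\lambda < \infty$ while the inner expectation is bounded, uniformly in $\lambda$, by $\E{\exp(\int_0^r \sigma_v\,dv)\,\IndEvent{\sigma_r < \infty}}$, which is finite by Step 3.

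\emph{Step 3: the exponential functional and the contour shift.} Rewriting $\int_0^r \sigma_v\,dv + i\lambda\sigma_r = \int_{(0,r]}(r - u + i\lambda)\,d\sigma_u$ and applying the master (Campbell) formula to the Poisson point process of jumps of $\sigma$, together with its drift $b^2$ and its killing at rate $a$, one obtains for any bounded measurable $h$ with $\Realpart{h} \ge 0$
\[
    \E{\exp\left(\int_{(0,r]} h(u)\,d\sigma_u\right)\IndEvent{\sigma_r < \infty}} = \exp\left(-\int_0^r \Psi_\sigma\big(-ih(u)\big)\,du\right),
\]
the integral being finite because (i) forces $\int_0^\infty e^{\rho x}\,x\,\nu(dx) < \infty$ for every $\rho \in \R$. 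Taking $h(u) = r - u + i\lambda$ and invoking $\Psi_\sigma = -i\Psi_L'$, the substitution $w = r - u$ gives $\int_0^r \Psi_\sigma(-ih(u))\,du = \int_0^r (-i)\Psi_L'(\lambda - iw)\,dw = \Psi_L(\lambda - ir) - \Psi_L(\lambda)$, so the factor $e^{\Psi_L(\lambda)}$ cancels and
\[
    g(x,r) = \frac{1}{2\pi}\int_\R e^{i\lambda x}\,e^{-\Psi_L(\lambda - ir)}\,d\lambda = \frac{e^{-rx}}{2\pi}\int_{\R - ir} e^{i\mu x}\,e^{-\Psi_L(\mu)}\,d\mu
\]
after putting $\mu = \lambda - ir$. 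Since $e^{-\Psi_L}$ is holomorphic on the strip $\{\mu \in \mathbb C : -r \le \mathrm{Im}(\mu) \le 0\}$ and (as one verifies from (i) and (ii)) decays uniformly there as $\mathrm{Re}(\mu) \to \pm\infty$, Cauchy's theorem permits shifting the contour back to $\R$, which yields $g(x,r) = e^{-rx}p(-x)$, as required.

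\emph{Main obstacle.} The algebraic heart of the computation — especially the cancellation produced by $\Psi_\sigma = -i\Psi_L'$ — is short; the genuine work is analytic. I expect the delicate points to be (a) making the Fourier inversion and the Fubini/Campbell manipulations rigorous, and above all (b) the contour shift, which requires controlling $\Realpart{\Psi_L}$ not merely on $\R$ but uniformly across the horizontal strip of width $r$, with vanishing contributions from the vertical sides of the approximating rectangles. Verifying that hypotheses (i) and (ii) genuinely deliver this uniform strip estimate is, I expect, the crux of the argument.
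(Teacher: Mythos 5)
Your proposal is correct and follows essentially the same route as the paper's own proof: the Markov-property reduction to the identity $g(x,r)=e^{-rx}p(-x)$, Fourier inversion of $p$, Campbell's formula combined with $\Psi_\sigma=-i\Psi_L'$ to produce the factor $\exp\left(\Psi_L(\lambda)-\Psi_L(\lambda-ir)\right)$, and a contour shift across the horizontal strip justified by analyticity from (i) together with the uniform lower bound $\Realpart{\Psi_L(\lambda-is)}\geq \Realpart{\Psi_L(\lambda)}+O(1)$ coming from (ii), which is exactly where the paper also concentrates its effort. One small correction to a justification: condition (i) does not in general give $\int_0^\infty e^{\rho x}x\,\nu(dx)<\infty$ (this can fail near $0$ when $\int_0^1 x\,\nu(dx)=\infty$, e.g.\ $\nu(dx)=x^{-\alpha-1}e^{-x^2}dx$), but Campbell's formula only needs $\int_0^r\int_0^\infty\left|e^{h(u)x}-1\right|x\,\nu(dx)\,du<\infty$, whose small-$x$ contribution behaves like $x^2\,\nu(dx)$ and whose large-$x$ contribution is controlled by (i), so the step itself stands.
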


The proof of \autoref{thm: recipemg} is quite technical, and as such we defer it to the Appendix. For now, we give an outline of some of the key ideas.

\begin{proof}[Proof sketch]
    We will only illustrate the case $a = b = 0$ as it reduces notation (but exactly the same argument still works in the general case --- we just have more terms to keep track of). Condition (ii) implies that $e^{-\Psi_L(\lambda)}$ is in $L^1$, so by Fourier inversion $L_1$ has a density, namely
        \[ p(x) = \frac{1}{2\pi}\int_\R e^{-i\lambda x}e^{-\Psi_L(\lambda)}\,d\lambda. \]

    In what follows we write $M_t = M_t^{\nu, a, b}$ for brevity. Let $(\mathcal F_t)$ be the filtration associated to $\sigma$. We will reduce the martingale property to an equivalent statement about (unconditional) expectations. Let $t,s \geq 0$ and define $\sigma_r' = \sigma_{s+r}-\sigma_s$ for $r \geq 0$: then the process $\sigma'$ has the same law as $\sigma$ and is independent of $\mathcal F_s$, and we may compute
    \begin{align*}
        \E{M_{t+s} \,|\, \mathcal F_s} &= M_s \E{\exp\left(\int_s^{t+s}\sigma_r\,dr\right)\frac{p(-\sigma_{t+s})}{p(-\sigma_s)} \,\Bigg|\, \mathcal F_s}\\
        &= M_s \E{\exp\left(\int_0^t (\sigma_s + \sigma_r')\,dr\right)\frac{p(-\sigma_s - \sigma_t')}{p(-\sigma_s)} \,\Bigg|\, \mathcal F_s}.
    \end{align*}
    So it is equivalent to show that
        \begin{equation} \label{eqn:martingalekey} 
        \E{\exp\left(\int_0^t (c + \sigma_r)\,dr\right)p(-c-\sigma_t)} = p(-c), \quad \text{ for all } c \geq 0.
        \end{equation}
    The Fourier inversion formula shows that
        \[ p(-c) = \frac{1}{2\pi} \int_\R \exp\left(G(\lambda)\right)\,d\lambda, \]
    where $G(\lambda) = ic\lambda - \Psi_L(\lambda)$. By condition (i), the characteristic exponent $\Psi_L$ has an analytic continuation to all of $\mathbb C$ (obtained simply by replacing $\lambda$ with a complex number in its Lévy--Khintchine decomposition), and so $G$ has an analytic continuation too. By applying Fourier inversion to the density in the expectation, and then applying Fubini's theorem, we obtain
        \[ \E{\exp\left(\int_0^t (c + \sigma_r)\,dr\right)p(-c-\sigma_t)} = \frac{1}{2\pi}\int_\R \E{\exp\left(i\lambda \sigma_t + \int_0^t \sigma_r\,dr\right)}e^{c(t+i\lambda)-\Psi_L(\lambda)}\,d\lambda.\]
    We can now handle the expectation using Campbell's formula: indeed, $\sigma$ is a pure jump subordinator, so we can find a Poisson process on $\R^+ \times \R^+$, of intensity $ds \otimes x\nu(dx)$, for which the set of atoms $\Pi \subset \R^+\times \R^+$ satisfies
        \[ \sigma_t = \sum_{\substack{(s, x) \in \Pi\\s\leq t}}x, \]
    and so we have
        \[ \E{\exp\left(i\lambda \sigma_t + \int_0^t \sigma_r\,dr\right)} = \E{\exp\left(\sum_{\substack{(s, x) \in \Pi\\ s\leq t}}(i\lambda x + (t-s)x)\right)}. \]
    Then Campbell's formula yields
    \begin{align*}
        \E{\exp\left(\sum_{\substack{(s, x) \in \Pi\\ s\leq t}}(i\lambda x + (t-s)x)\right)} &= \exp\left(\int_0^\infty \int_0^t (e^{i\lambda x + (t-s)x}-1)\,ds\,x\nu(dx)\right)\\
        &= \exp\left(\int_0^\infty \left(e^{i\lambda x}\left(\frac{e^{tx}-1}{x}\right)-t\right)x\nu(dx)\right)\\
        &= \exp\left(\int_0^\infty \left(e^{(t+i\lambda)x} - e^{i\lambda x} - tx\right)\,\nu(dx)\right),
    \end{align*}
    which we recognise as $\exp\big(\Psi_L(\lambda) - \Psi_L(\lambda - it)\big)$. Hence,
    \begin{align*}
        & \E{\exp\left(\int_0^t (c + \sigma_r)\,dr\right)p(-c-\sigma_t)} \\
        & \qquad = \frac{1}{2\pi}\int_\R \E{\exp\left(i\lambda \sigma_t + \int_0^t \sigma_r\,dr\right)}e^{c(t+i\lambda)-\Psi_L(\lambda)}\,d\lambda\\
        & \qquad = \frac{1}{2\pi}\int_\R \! \exp\Big(\Psi_L(\lambda) - \Psi_L(\lambda - it) + c(t+i\lambda) - \Psi_L(\lambda)\Big)\,d\lambda\\
        & \qquad = \frac{1}{2\pi}\int_\R \exp\Big(ic(\lambda - it) - \Psi_L(\lambda - it)\Big)\,d\lambda\\
        & \qquad = \frac{1}{2\pi} \int_\R \exp(G(\lambda - it))\,d\lambda.
    \end{align*}
    Thus it remains only to show that
        \[ \int_\R \exp(G(\lambda - it))\,d\lambda = \int_\R \exp(G(\lambda))\,d\lambda, \]
    and we can do this by a contour integral (using conditions (i) and (ii) to ensure sufficient decay).
\end{proof}

In our case of interest we have $a = b = 0$ and $\nu(dx) = Cx^{-\alpha-1}\,dx$. It is easy to compute $\Realpart{\Psi_L(\lambda)} = \Theta(|\lambda|^\alpha)$ and so the second condition of \autoref{thm: recipemg} holds, but the first does not; thus we cannot apply our theorem directly --- we instead deduce it by taking limits.

\begin{corollary_restate}\label{cor: extendrecipemg}
    The conclusion of \autoref{thm: recipemg} also holds in the case $a = b = 0$, $\nu(dx) = C_\alpha x^{-\alpha-1}\,dx$.
\end{corollary_restate}
\begin{proof}
    For each $\varepsilon \in (0, 1]$ define a measure $\nu_\varepsilon$ on $(0, \infty)$ by $\nu_\varepsilon(dx) = C_{\alpha}x^{-\alpha-1}e^{-\varepsilon x^2}\,dx$. We apply \autoref{thm: recipemg} in the case $a = b = 0, \nu = \nu_\varepsilon$: let $L^\varepsilon$ and $\sigma^\varepsilon$ denote the relevant Lévy process and subordinator.
    
    First we must check that the two conditions hold. Condition (i) holds straightforwardly by splitting the integral into $(0, 1)$ and $[1, \infty)$ and using standard estimates. For Condition (ii), we may compute
    \begin{align*}
        \Realpart{\Psi_{L^\varepsilon}} &= \Realpart{-\int_0^\infty (e^{i\lambda x} - 1 - i\lambda x)\,\nu_\varepsilon(dx)}\\
        &= \int_0^\infty (1-\cos(\lambda x))\nu_\varepsilon(\,dx)\\
        &= C_{\alpha}\int_0^\infty (1-\cos(|\lambda|x))x^{-\alpha-1}e^{-\varepsilon x^2}\,dx.
    \end{align*}
    Now a substitution $x = |\lambda|u$ yields that this is equal to
    \begin{align*}
         C_{\alpha}|\lambda|^\alpha\int_0^\infty (1-\cos(u))u^{-\alpha-1}e^{-\varepsilon u^2 / |\lambda|^2}\,du.
    \end{align*}
    By monotone convergence, the integral in the above expression converges to a finite quantity $I_\alpha = \int_0^\infty (1-\cos(u))u^{-\alpha-1}\,du$ as $|\lambda| \to \infty$, and so $\Realpart{\Psi_{L^\varepsilon}(\lambda)} \sim C_\alpha I_\alpha |\lambda|^\alpha$ as $|\lambda| \to \infty$. In particular, $\Realpart{\Psi_{L^\varepsilon}} \gg |\lambda|$.

    Hence, letting $p_\varepsilon$ be the density of $L_1^\varepsilon$, we see that $M^\varepsilon_t = \exp(\int_0^t \sigma_s^\varepsilon\,ds)p_\varepsilon(-\sigma_t^\varepsilon)$ is a martingale with mean $p_\varepsilon(0)$. As seen in the sketch proof of \autoref{thm: recipemg}, it is easier to work with the condition
        \begin{equation*} \E{\exp\left(\int_0^t (c+\sigma_s^\varepsilon)\,ds\right)p_\varepsilon(-c-\sigma_t^\varepsilon)} = p_\varepsilon(-c) \quad \text{ for all } c \geq 0, 
        \end{equation*}
    and deduce the analogous statement for $\sigma$ by taking limits. In what follows, we consider $c \geq 0$ fixed. To handle the expectation on the left-hand side, it is helpful to consider a coupling of $\sigma^\varepsilon$ and $\sigma$. Since $\nu_\varepsilon \ll \nu$ with Radon--Nikodym derivative $e^{-\varepsilon x^2}$ taking values in $[0, 1]$, we can achieve this by thinning: consider a Poisson point process $\Pi \subset \R^+ \times \R^+ \times [0, 1]$ with intensity measure $\nu(dx) \otimes dt \otimes du$ and atoms denoted by $(x, t, u)$. Then we set
        \[ \sigma_t = \sum_{\substack{(x, s, u) \in \Pi:\\ s \leq t}} x \quad \text{ and } \quad \sigma_t^\varepsilon = \sum_{\substack{(x, s, u) \in \Pi :\\s \leq t\\u \leq e^{-\varepsilon x^2}}}x.\]
    This coupling arranges that $\sigma_t^\varepsilon \uparrow \sigma_t$ for all $t$, almost surely. Thus we have
        \[ \exp\left(\int_0^t(c+\sigma_s^\varepsilon)\,ds \right) \to \exp\left(\int_0^t (c+\sigma_s)\,ds\right) \quad \text{ a.s.}\]

    Now notice that $p_\varepsilon \to p$ uniformly: indeed, for any $x \in \R$, we have
        \begin{align*}
        |p_\varepsilon(x) - p(x)| &= \left|\frac{1}{2\pi}\int_\R e^{-i\lambda x}\left(\exp(-\Psi_{L^\varepsilon}(\lambda))-\exp(-\Psi_L(\lambda))\right)\,d\lambda\right|\\
        &\leq \frac{1}{2\pi}\int_\R \left|e^{-\Psi_{L^\varepsilon}(\lambda)}-e^{-\Psi_L(\lambda)}\right|\,d\lambda,
        \end{align*}
    a bound which does not depend on $x$. The integrand tends to 0 as $\varepsilon \downarrow 0$ and, by our calculations above, is bounded by $2e^{-\delta|\lambda|^\alpha\IndEvent{|\lambda| \geq 1}}$ for some $\delta > 0$, so by dominated convergence this integral tends to 0. Now we may bound
    \begin{align*}
        \left|p_\varepsilon(-c-\sigma_t^\varepsilon) - p(-c-\sigma_t)\right| \leq \|p_\varepsilon - p\|_\infty + |p(-c-\sigma_t) - p(-c-\sigma_t^\varepsilon)|,
    \end{align*}
    where we know that the first term tends to 0 as $\varepsilon \to 0$. Note that $-c - \sigma_t^\varepsilon \to -c - \sigma_t$ a.s., and $p$ is continuous at $-c-\sigma_t$, so the second term tends to zero a.s. Hence $p_\varepsilon(-c-\sigma_t^\varepsilon) \to p(-c-\sigma_t)$ a.s. Combining with the convergence of the exponential terms, we see
        \[ \exp\left(\int_0^t (c+\sigma_s^\varepsilon)\,ds\right)p_\varepsilon(-c-\sigma_t^\varepsilon) \to \exp\left(\int_0^t(c+\sigma_s)\,ds\right)p(-c-\sigma_t) \quad \text{ a.s.}\]
    The proof will be complete if we can deduce that the corresponding expectations converge.

    To do this, we will give a uniform bound on all of the expressions above and apply the bounded convergence theorem. To this end we introduce a coupling between $L^\varepsilon$ and $L$: let $\mu^\varepsilon = \nu - \nu^\varepsilon$, and observe that $\mu^\varepsilon$ is a (positive) measure with $\mu^\varepsilon(\R^+) < \infty$ and $\int_0^\infty x\mu^\varepsilon(dx) < \infty$. We can construct a Lévy process $R^\varepsilon$ independent of $L^\varepsilon$ with characteristic exponent
        \[ \Psi_{R^\varepsilon}(\lambda) = i\lambda \int_0^\infty x\mu^\varepsilon(dx) - \int_0^\infty (e^{i\lambda x} - 1)\mu^\varepsilon(dx), \]
    and with the property that $L^\varepsilon + R^\varepsilon \overset{d}{=} L$. Letting $\eta$ be the law of $R^\varepsilon_1$, we have
        \[ p(x) = \int_\R p_\varepsilon(x-y)\,\eta(dy), \quad \text{ for all } x \in \R. \]
    However, $R^\varepsilon$ is the sum of a deterministic drift and a pure jump process, and the intensity measure of the jump process is $\mu^\varepsilon$, which is finite. Hence, $\eta$ has an atom at $-a_\varepsilon = -\int_0^\infty x\mu_\varepsilon(dx)$ of mass at least $e^{-\mu^\varepsilon(\R^+)}$. It follows that
        \[ p(x) \geq p_\varepsilon(x+a_\varepsilon)e^{-\mu^\varepsilon(\R^+)}. \]
    Using that $\mu^\varepsilon(\R^+) \leq \mu^1(\R^+)$ for $\varepsilon \leq 1$, we see there is a constant $M = e^{\mu^1(\R^+)}$ such that $p_\varepsilon(-x) \leq Mp(-x-a_\varepsilon)$ for all $\varepsilon$ and all $x$.

    Now by Theorem 14.35 of Sato~\cite{satobook}, there exist constants $C_1, C_2 > 0$ and $\beta = \frac{2 - \alpha}{2\alpha - 2}$ such that
        \begin{equation}\label{eqn:satoasymptotic} p(-x) \sim C_1 x^{\beta} \exp\left(-C_2x^{\frac{\alpha}{\alpha-1}}\right) \quad \text{ as } x \to \infty, \end{equation}
    and so in particular we can find constants $\widetilde C_1, \widetilde C_2 > 0$ such that
        \[ p(-x) \leq \widetilde C_1 \exp\left(-\widetilde C_2 x^{\frac{\alpha}{\alpha-1}}\right) \quad \text{ for all } x > 0.\]
    Hence, we almost surely have the bound
    \begin{align*}
        \exp\left(\int_0^t (c+\sigma_s^\varepsilon)\,ds\right)p_\varepsilon(-c-\sigma_t^\varepsilon) &\leq \exp\left(t(c+\sigma_t^\varepsilon)\right)p_\varepsilon(-c-\sigma_t^\varepsilon)\\
        &\leq M\exp\left(t(c+\sigma_t^\varepsilon)\right)p(-c-\sigma_t^\varepsilon-a_\varepsilon)\\
        &\leq \sup_{x \geq 0}\left(M e^{tx}p(-x-a_\varepsilon)\right)\\
        &\leq \sup_{x \geq 0}\left(M \widetilde C_1 \exp\left(tx - \widetilde C_2(x+a_\varepsilon)^{\frac{\alpha}{\alpha-1}}\right)\right)\\
        &\leq \sup_{x \geq 0}\left(M \widetilde C_1 \exp\left(tx - \widetilde C_2x^{\frac{\alpha}{\alpha-1}}\right)\right),
    \end{align*}
    which is a finite constant, so we are done by bounded convergence.
\end{proof}

Recall the definition of the measure-changed subordinator $\widetilde{\sigma}$ from (\ref{eqn:tildesigma}).

\begin{proposition_restate} \label{prop:Markovprop}
    $(\widetilde \sigma_t)_{t \ge 0}$ is a time-homogeneous increasing Markov process with law such that for any suitable test functions $F$, $G$ and any $t_1, t_2 > 0$ we have
        \begin{multline*} \E{F(\widetilde \sigma_s, 0 \leq s \leq t_1)G(\widetilde \sigma_{t_1 + r}, 0 \leq r \leq t_2)}
        \\ = \E{F(\widetilde{\sigma}_s, 0 \le s \le t_1)  \exp \left(\int_0^{t_2} (\widetilde{\sigma}_{t_1} + \sigma'_u) du \right) \frac{p(-\widetilde{\sigma}_{t_1} - \sigma'_{t_2})}{p(-\widetilde{\sigma}_{t_1})} G(\widetilde{\sigma}_{t_1} + \sigma'_{r}, 0 \le r \le t_2)}, \end{multline*}
    where $\sigma'$ is a copy of the subordinator $\sigma$ independent of $\widetilde \sigma$.
\end{proposition_restate}

\begin{proof}
    The increasing property is inherited from that of the subordinator $\sigma$.
    
    Now fix $t_1, t_2 \ge 0$ and suppose that $F$ and $G$ are suitable test-functions. Let $\sigma'$ be an independent copy of the subordinator $\sigma$, which is also independent of $\widetilde{\sigma}$. Then, using the independence of the increments of $\sigma$, we have that 
    \begin{align*}
        & \E{F(\widetilde{\sigma}_s, 0 \le s \le t_1) G(\widetilde{\sigma}_{t_1+r}, 0 \le r \le t_2)} \\
        & = \E{\exp \left( \int_0^{t_1+t_2} \sigma_u du\right) \frac{p(-\sigma_{t_1+t_2})}{p(0)} F(\sigma_s, 0 \le s \le t_1) G(\sigma_{t_1+r}, 0 \le r \le t_2)} \\
        & = \mathbb{E} \left[ \exp \left( \int_0^{t_1} \sigma_u du \right) \frac{p(-\sigma_{t_1})}{p(0)} F(\sigma_s, 0 \le s \le t_1) \right. \\
        & \qquad \quad \cdot \left.\exp \left(\int_0^{t_2} (\sigma_{t_1} + \sigma'_u) du \right) \frac{p(-\sigma_{t_1} - \sigma'_{t_2})}{p(-\sigma_{t_1})} G(\sigma_{t_1} + \sigma'_{r}, 0 \le r \le t_2) \right] \\
        & = \E{F(\widetilde{\sigma}_s, 0 \le s \le t_1)  \exp \left(\int_0^{t_2} (\widetilde{\sigma}_{t_1} + \sigma'_u) du \right) \frac{p(-\widetilde{\sigma}_{t_1} - \sigma'_{t_2})}{p(-\widetilde{\sigma}_{t_1})} G(\widetilde{\sigma}_{t_1} + \sigma'_{r}, 0 \le r \le t_2)}.
    \end{align*}
    It follows that $(\widetilde{\sigma}_{t_1+r}, 0 \le r \le t_2)$ depends on $(\widetilde{\sigma}_s, 0 \le s \le t_1)$ only through the value of $\widetilde{\sigma}_{t_1}$, and that its distribution does not depend on $t_1$. The result follows.
\end{proof}

We observe that the Radon--Nikodym derivative is not uniformly integrable: indeed, one can show that $\lim_{t \to \infty} M_t = 0$ a.s. Thus the absolute continuity only holds on compact time intervals and, in particular, almost sure late-time properties of $\sigma$ do not carry over to $\widetilde \sigma$. In fact, we will see in the next section that the asymptotics of $\sigma_t$ and $\widetilde \sigma_t$ as $t \to \infty$ are very different.

\subsection{Properties of $\widetilde \sigma_t$}
We aim to show that the line-breaking construction with intensity $\widetilde \sigma_t$ yields a compact $\R$-tree: to do this it will be important to study the moments. We can compute the Laplace transform:

\begin{proposition_restate}\label{prop: sigmalaplace}
    Define $G(z) = \int_0^\infty (e^{zx}-1-zx)C_\alpha x^{-\alpha-1}\,dx$ for $\Re(z) \leq 0$. Then the Laplace transform of $\widetilde \sigma_t$ is given by
        \[ \E{e^{-\lambda \widetilde \sigma_t}} = \frac{1}{2\pi p(0)}\int_\R \exp\left(G(iu-\lambda)-G(iu-t-\lambda)+G(iu-t)\right)\,du, \quad \Re(\lambda) \geq 0.\]
\end{proposition_restate}
The proof of this identity is obtained by an adaptation of the proof of \autoref{thm: recipemg} given in the Appendix. The following are useful consequences:

\begin{corollary_restate}\label{cor: sigmean}
    As $t \to \infty$ we have $\E{\widetilde \sigma_t} \sim \alpha t^{\alpha-1}$. 
\end{corollary_restate}
\begin{proof}
    We differentiate the Laplace transform identity in \autoref{prop: sigmalaplace} at $\lambda = 0$. By considering the same sequence of approximations used in the proof of \autoref{cor: extendrecipemg} we see that one can freely exchange derivatives with the Fourier integral, thus obtaining
        \[ \E{-\widetilde \sigma_t} = \frac{1}{2\pi p(0)}\int_\R \big(G'(iu-t)-G'(iu)\big)\exp\big(G(iu)\big)\,du.\]
    Recall that $G(iu) = -\Psi_L(u)$. By differentiating this (again valid by considering the same approximations) we see
        \[ G'(iu) - G'(iu-t) = \int_0^\infty e^{iux}(1-e^{-tx})C_\alpha x^{-\alpha}\,dx \]
    and so
    \begin{align*}
        \E{\widetilde \sigma_t} &= \frac{1}{2\pi p(0)}\int_\R \int_0^\infty e^{iux}(1-e^{-tx})C_\alpha x^{-\alpha}\exp(G(iu))\,dx\,du\\
            &= \int_0^\infty (1-e^{-tx})C_\alpha x^{-\alpha}\left(\frac{1}{2\pi p(0)}\int_\R e^{iux}e^{-\Psi_L(u)}\,du\right)\,dx\\
            &= \int_0^\infty (1-e^{-tx})C_\alpha x^{-\alpha}\frac{p(-x)}{p(0)}\,dx.
    \end{align*}
    Now a substitution of $y = tx$ yields
    \[ \E{\widetilde \sigma_t} = t^{\alpha-1}\int_0^\infty (1-e^{-y})C_\alpha y^{-\alpha}\frac{p(-y/t)}{p(0)}\,dy, \]
    and by dominated convergence the integral converges to $C_\alpha \int_0^\infty (1-e^{-y}) y^{-\alpha}\,dy$. Integration by parts reveals that this is equal to $C_\alpha \frac{\Gamma(2-\alpha)}{\alpha - 1} = \alpha$.
\end{proof}

\begin{corollary_restate}\label{cor: sigvar}
    $\Var{\widetilde \sigma_t} = O(t^{\alpha-1})$ as $t \to \infty$.
\end{corollary_restate}
\begin{proof}
    Let us write $H = H(\lambda, t, u) = G(iu - \lambda) - G(iu - t - \lambda) + G(iu - t)$. We observe that $H$ satisfies the PDE
        \[ \frac{\partial H}{\partial u} + i\frac{\partial H}{\partial \lambda} = iG'(iu - t),\]
    so in particular we have
        \[ \frac{\partial^2 H}{\partial \lambda^2} = i\frac{\partial}{\partial u}\left(\frac{\partial H}{\partial \lambda}\right).\]
    Using this, we can calculate
    \begin{align*}
        2\pi p(0) \E{\widetilde \sigma_t^2 e^{-\lambda \widetilde \sigma_t}} &= \int_\R \frac{\partial ^2}{\partial \lambda^2}\left(\exp(H(\lambda, t, u))\right)\,du\\
        &= \int_\R \left(\frac{\partial^2 H}{\partial \lambda^2} + \left(\frac{\partial H}{\partial \lambda}\right)^2\right)\exp(H)\,du\\
        &= -i\int_\R \frac{\partial H}{\partial \lambda}\frac{\partial}{\partial u}\left(\exp(H)\right)\,du + \int_\R \left(\frac{\partial H}{\partial \lambda}\right)^2\exp(H)\,du\\
        &= \int_\R \frac{\partial H}{\partial \lambda}\left(-i\frac{\partial H}{\partial u} + \frac{\partial H}{\partial \lambda}\right)\exp(H)\,du\\
        &= \int_\R \frac{\partial H}{\partial \lambda}G'(iu - t)\exp(H)\,du,
    \end{align*}
    using integration by parts to obtain the first term in the third line. Expanding and evaluating at $\lambda = 0$, we deduce that
        \[ \E{\widetilde \sigma_t^2} = \frac{1}{2\pi p(0)}\int_\R G'(iu-t)(G'(iu-t)-G'(iu))\exp(G(iu))\,du.\]
    Noting that $G(iu) = -\Psi_L(u)$, we further compute that
    \begin{align*}
        \E{\widetilde \sigma_t^2} &= \frac{1}{2\pi p(0)}\int_\R G'(iu-t)(G'(iu-t)-G'(iu))e^{-\Psi_L(u)}\,du\\
        &= \frac{C_\alpha^2}{2\pi p(0)}\int_\R e^{-\Psi_L(u)}\int_0^\infty (e^{(iu-t)x}-1)x^{-\alpha}\,dx\int_0^\infty e^{iuy}(e^{-ty}-1)y^{-\alpha}\,dy\,du\\
        &= \frac{C_\alpha^2}{2\pi p(0)}\int_0^\infty \int_0^\infty x^{-\alpha}y^{-\alpha}(e^{-ty}-1)\left(\int_\R e^{-\Psi_L(u)}\left(e^{iu(x+y)}e^{-tx}-e^{iuy}\right)\,du\right)\,dx\,dy\\
        &= \frac{C_\alpha^2}{p(0)}\int_0^\infty \int_0^\infty x^{-\alpha}y^{-\alpha}(e^{-ty}-1)\left(e^{-tx}p(-x-y)-p(-y)\right)\,dx\,dy.
    \end{align*}
    Now recall from \autoref{cor: sigmean} that $\E{\widetilde \sigma_t} = \frac{C_\alpha}{p(0)}\int_0^\infty (1-e^{-tx})x^{-\alpha}p(-x)\,dx$. Squaring this and writing it as a double integral, we obtain the following formula for the variance:
    \begin{align*}
        \Var{\widetilde \sigma_t} = \frac{C_\alpha^2}{p(0)^2}\int_0^\infty \int_0^\infty x^{-\alpha}y^{-\alpha}(1-e^{-ty})\Bigg(&p(-y)p(0) - e^{-tx}p(-x-y)p(0)\\
        & \quad - (1-e^{-tx})p(-x)p(-y)\Bigg)\,dx\,dy.
    \end{align*}

    The term in large brackets can be decomposed into $J_1 + J_2 + J_3$, where
    \begin{align*}
        J_1 &= p(-y)(p(0) - p(-x)),\\
        J_2 &= e^{-tx}p(0)(p(-y)-p(-x-y)),\\
        J_3 &= -e^{-tx}p(-y)(p(0)-p(-x)).
    \end{align*}
    But all $J_i$ can be bounded by $x||p||_\infty||p'||_\infty \wedge 2||p||_\infty^2$. As $p$ and $p'$ are bounded (a result that can be seen by the Fourier inversion formula) we can thus find constants $c_1, c_2 > 0$ such that $|J_i| \leq (c_1x) \wedge c_2$ for $i = 1, 2, 3$. Then
    \begin{align*}
        \Var{\widetilde \sigma_t} &\leq \frac{3C_\alpha^2}{p(0)^2}\int_0^\infty \int_0^\infty x^{-\alpha}y^{-\alpha}(1-e^{-ty})((c_1x) \wedge c_2)\,dx\,dy\\
        &= \frac{3C_\alpha^2}{p(0)^2}\int_0^\infty x^{-\alpha}((c_1x) \wedge c_2)\,dx \times \int_0^\infty y^{-\alpha}(1-e^{-ty})\,dy.
    \end{align*}
    The $x$-integral is finite and independent of $t$, while the $y$-integral is of order $t^{\alpha-1}$ (as seen in \autoref{cor: sigmean}). Thus we have a bound of order $t^{\alpha-1}$. 
\end{proof}

The process $\widetilde{\sigma}$ increases only by jumps. For $t \ge 0$, write $\Delta \widetilde{\sigma}_t = \widetilde{\sigma}_t - \widetilde{\sigma}_{t-}$. The \emph{quadratic variation process} $(Q_t)_{t \ge 0}$ of $\widetilde{\sigma}$ is defined to be
\[
Q_t = \sum_{0 \le s \le t} (\Delta \widetilde{\sigma}_t)^2,
\]
where the sum is over the (countably many) jumps of $\widetilde{\sigma}$ occurring before time $t$. As $Q_t$ is increasing in $t$, we let $Q_{\infty} = \lim_{t \to \infty} Q_t \in \R^+ \cup \{\infty\}.$

\begin{proposition_restate} \label{prop: quadraticvarfinite}
We have
\[
\E{Q_{\infty}} \leq C_\alpha \int_0^{\infty} x^{1-\alpha} \frac{p(-x)}{p(0)} dx < \infty.
\]
\end{proposition_restate}

We expect that in fact equality holds for the above, but we only prove an upper bound as it will suffice for our purposes. We will make use of the following lemma:

\begin{lemma_restate} \label{lem: quadvarlemma}
    For any $t, x > 0$ we have
        \[ \E{\frac{p(-\widetilde \sigma_t - x)}{p(-\widetilde \sigma_t)}} = \exp(-tx)\frac{p(-x)}{p(0)}. 
        \]
\end{lemma_restate}
\begin{proof}
    Using (\ref{eqn:martingalekey}) we can calculate
    \begin{align*}
        p(-x) &= \E{\exp\left(\int_0^t \left(x + \sigma_s\right)\,ds\right)p(-\sigma_t - x)}\\
        &= \exp(tx)\E{\exp\left(\int_0^t \sigma_s\,ds\right)p(-\sigma_t - x)}\\
        &= \exp(tx)\E{\exp\left(\int_0^t \sigma_s\,ds\right) \frac{p(-\sigma_t)}{p(0)} \frac{p(0)p(-\sigma_t - x)}{p(-\sigma_t)}}\\
        &= \exp(tx)\,p(0)\E{\frac{p(-\widetilde \sigma_t - x)}{p(-\widetilde \sigma_t)}}. \qedhere
    \end{align*} 
\end{proof}

\begin{proof}[Proof of \autoref{prop: quadraticvarfinite}]
For each $h \in (0, 1)$, let $I_h : [0, \infty) \to \R$ be the unique continuous function such that $I_h(\widetilde \sigma_t) = \frac{1}{h}\E{(\widetilde \sigma_{t+h} - \widetilde \sigma_t)^2 \mid \widetilde \sigma_t}$ for all $t \geq 0$. (Note that this exists by \autoref{prop:Markovprop}.) Also set
    \[ I(y) = C_\alpha\int_0^\infty x^{2-\alpha}\frac{p(-y-x)}{p(-y)}\,dx \quad \text{ for all } y \geq 0.\]
We begin by showing that
\begin{equation} \label{eqn:generatorconv}
\lim_{h \downarrow 0} I_h(\widetilde \sigma_t) = C_\alpha \int_0^{\infty} x^{2-\alpha} \frac{p(-\widetilde{\sigma}_t - x)}{p(-\widetilde{\sigma}_t)} dx = I(\widetilde \sigma_t) \quad \text{a.s.}
\end{equation}
To this end, note that by \autoref{prop:Markovprop}, we have
\begin{equation} \label{eqn:needsbounding}
\E{(\widetilde{\sigma}_{t+h} - \widetilde{\sigma_t})^2 \mid \widetilde{\sigma}_t} = \E{\sigma_h^2 \exp\left(h \widetilde{\sigma}_t + \int_0^h \sigma_s ds\right) \frac{p(-\widetilde{\sigma}_t - \sigma_h)}{p(-\widetilde{\sigma}_t)} \given{\Bigg} \widetilde{\sigma}_t},
\end{equation}
where $\sigma$ is an $(\alpha-1)$-stable subordinator, started from 0, and independent of $\widetilde{\sigma}_t$. 

For $r \ge 0$, define the functions $f_r(x,y) = x^2 \exp(rx) \frac{p(-y-x)}{p(-y)}$. Then
\[
\lim_{r \downarrow 0} f_r(x,y) = f_0(x,y).
\]
Moreover, all of these functions are bounded in both $x \ge 0$ and $y \ge 0$, uniformly for $0 \le r \le 1$.

Fix $\epsilon > 0$. Now note that the right-hand side of (\ref{eqn:needsbounding}) is bounded below by
\[
\exp(h \widetilde{\sigma}_t)\E{f_0(\sigma_h,\widetilde{\sigma}_t) \mid \widetilde{\sigma}_t}
\]
and above by
\[
\exp(h \widetilde{\sigma}_t) \E{f_{\epsilon}(\sigma_h,\widetilde{\sigma}_t) \mid \widetilde{\sigma}_t},
\]
whenever $h \le \epsilon$.

For $r \ge 0$, the (random) function $x \mapsto f_r(x,\widetilde{\sigma}_t)$ is twice continuously differentiable and such that $f_r(\cdot, \widetilde \sigma_t)$ along with its first and second partial derivatives in $x$ are vanishing as $x \to \infty$.  By Theorem 31.5 of \cite{satobook}, $f_r(\cdot,\widetilde{\sigma}_t)$ is in the domain of the generator of $\sigma$, which is the operator $\mathcal{L}$ given by
\[
\mathcal{L}f(y) = C_\alpha \int_0^{\infty} \left(f(y+x) - f(x)\right) x^{-\alpha} dx.
\]
Hence,
\[
\lim_{h \downarrow 0} \frac{1}{h} \E{f_r(\sigma_h,\widetilde{\sigma}_t) \mid \widetilde{\sigma}_t} = C_\alpha \int_0^{\infty} x^{2-\alpha} \exp(rx) \frac{p(-\widetilde{\sigma}_t - x)}{p(-\widetilde{\sigma}_t)} dx.
\]
It then follows that
\[
\liminf_{h \downarrow 0} \frac{1}{h} \E{(\widetilde{\sigma}_{t+h} - \widetilde{\sigma_t})^2 \mid \widetilde{\sigma}_t} \ge C_\alpha \int_0^{\infty} x^{2-\alpha} \frac{p(-\widetilde{\sigma}_t - x)}{p(-\widetilde{\sigma}_t)} dx
\]
and
\begin{align*}
\limsup_{h \downarrow 0} \frac{1}{h} \E{(\widetilde{\sigma}_{t+h} - \widetilde{\sigma_t})^2 | \widetilde{\sigma}_t} & \le C_\alpha \inf_{\epsilon > 0} \int_0^{\infty} x^{2-\alpha} \exp(\epsilon x) \frac{p(-\widetilde{\sigma}_t - x)}{p(-\widetilde{\sigma}_t)} dx \\
& = C_\alpha \int_0^{\infty} x^{2-\alpha} \frac{p(-\widetilde{\sigma}_t - x)}{p(-\widetilde{\sigma}_t)} dx,
\end{align*}
by bounded convergence. The claimed result (\ref{eqn:generatorconv}) follows.

Now note that, for each $h > 0$,
\[
Q_t = \sum_{0 \le s \le t}(\Delta \widetilde{\sigma}_s)^2 \le \int_0^t \frac{1}{h}(\widetilde{\sigma}_{s+h}-\widetilde{\sigma}_s)^2 ds
\]
and so by Fubini's theorem and the tower law,
\[
\E{Q_t} \le \int_0^t \E{\frac{1}{h} \E{(\widetilde{\sigma}_{s+h}-\widetilde{\sigma}_s)^2 \mid \widetilde{\sigma}_s}}\,ds = \int_0^t \E{ I_h(\widetilde \sigma_s)}\,ds.
\]
We would like to apply dominated convergence to the double integral $\int_0^t \E{\cdot}\,ds$; we will justify this step in a moment. For now, assuming that this application is valid, we obtain
    \[ \E{Q_t} \leq \lim_{h \downarrow 0} \int_0^t \E{I_h(\widetilde \sigma_s)}\,ds = \int_0^t \E{I(\widetilde \sigma_s)}\,ds. \]
Thus by monotone convergence we get $\E{Q_\infty} \leq \int_0^\infty \E{I(\widetilde \sigma_s)}\,ds$, and so it suffices to show that this integral is finite. This is now a straightforward calculation using \autoref{lem: quadvarlemma}:
\begin{align*}
    \int_0^\infty \E{I(\widetilde \sigma_s)}\,ds &= \int_0^\infty \int_0^\infty x^{2-\alpha}\E{\frac{p(-\widetilde \sigma_s - x)}{p(-\widetilde \sigma_s)}}\,ds\,dx\\
    &= \int_0^\infty \int_0^\infty x^{2-\alpha}\exp(-sx)\frac{p(-x)}{p(0)}\,ds\,dx\\
    &= \int_0^\infty x^{1-\alpha}\frac{p(-x)}{p(0)}\,dx < \infty.
\end{align*}

We now return to justifying the application of dominated convergence. We show that there exists some constant $M > 0$ such that, for all $s \in \R$ and all $h \in (0, 1)$, we have $I_h(\widetilde \sigma_s) \leq M\exp(\widetilde \sigma_s)$. Note that $\int_0^t \E{\exp(\widetilde \sigma_s)}\,ds < \infty$: in fact we can calculate it explicitly by analogous methods to those used in \autoref{prop: sigmalaplace}.

Using the asymptotic estimate from (\ref{eqn:satoasymptotic}) and the fact that $p$ is strictly positive we see that there exists a constant $R > 0$ such that 
\[
p(-x) \leq R(x^\beta \vee 1)\exp(-C_2x^{\frac{\alpha}{\alpha-1}}), \quad p(-x) \geq \frac{1}{R}(x^\beta \vee 1)\exp(-C_2x^{\frac{\alpha}{\alpha-1}}) \quad \text{for all $x \geq 0$},
\]
where $\beta = (2-\alpha)/(2\alpha-2)$.

Hence,
\begin{equation} \label{eq: densityratio}
 \frac{p(-x-y)}{p(-y)} \leq R^2 \quad \text{for all $x, y \geq 0$}. 
\end{equation}

For any $s \geq 0$ and any $h \in (0, 1)$, we may use \autoref{prop:Markovprop} to obtain
\begin{align*}
    I_h(\widetilde \sigma_s) &= \frac{1}{h}\E{(\widetilde \sigma_{s+h} - \widetilde \sigma_s)^2 \mid \widetilde \sigma_s}\\
    &= \frac{1}{h}\E{(\sigma'_h)^2\exp\left(\int_0^h \left(\widetilde \sigma_s + \sigma'_u\right)\,du\right)\frac{p(-\widetilde \sigma_s - \sigma'_h)}{p(-\widetilde \sigma_s)} \given{\Bigg} \widetilde \sigma_s}\\
    &\leq \frac{1}{h}\exp(h\widetilde \sigma_s)\E{(\sigma'_h)^2\exp\left(h\sigma'_h\right)\frac{p(-\widetilde \sigma_s - \sigma'_h)}{p(-\widetilde \sigma_s)} \given{\bigg} \widetilde \sigma_s}\\
    &\leq \exp(\widetilde \sigma_s) \times \frac{1}{h}\E{(\sigma'_h)^2\exp(\sigma'_h)\frac{p(-\widetilde \sigma_s - \sigma'_h)}{p(-\widetilde \sigma_s)} \given{\bigg} \widetilde \sigma_s}.
\end{align*}
It now suffices to find a constant $M$ such that
    \[ \frac{1}{h}\E{(\sigma'_h)^2\exp(\sigma'_h)\frac{p(-x - \sigma'_h)}{p(-x)}} \leq M\]
for all $x \geq 0$ and all $h \in (0, 1)$. We split the left-hand side into the sum
    \[ \frac{1}{h}\E{(\sigma'_h)^2\exp(\sigma'_h)\frac{p(-x - \sigma'_h)}{p(-x)}\IndEvent{\sigma'_h \leq R}} + \frac{1}{h}\E{(\sigma'_h)^2\exp(\sigma'_h)\frac{p(-x - \sigma'_h)}{p(-x)}\IndEvent{\sigma'_h > R}} \]
and find constants $M_1, M_2$ bounding each term.

Using (\ref{eq: densityratio}), we can crudely bound the first term by
\begin{align*}
    \frac{1}{h}\E{(\sigma'_h)^2\exp(\sigma'_h)\frac{p(-x - \sigma'_h)}{p(-x)}\IndEvent{\sigma'_h \leq R}} &\leq R^2 \times \frac{1}{h}\E{(\sigma'_h)^2\exp(\sigma'_h)\IndEvent{\sigma'_h \leq R}}\\
    &\leq R^2 e^R \times \frac{1}{h}\E{(\sigma'_h \wedge R)^2}.
\end{align*}
This bound does not depend on $x$, is finite for each fixed $h \in (0, 1)$ and is continuous in $h$ on this interval. Moreover, again using the generator, as $h \downarrow 0$ the bound converges to $R^2 e^R\int_0^\infty (y \wedge R)^2 C_\alpha y^{-\alpha}\,dy$, which is also finite. Thus there is some choice of $M_1$ such that our bound is at most $M_1$ for all $h \in (0, 1)$.

For the second term, we can use the asymptotic estimates for $p$ again, this time to obtain the existence of a constant $\widetilde R$ such that
    \[ y^2\exp(y)\frac{p(-x-y)}{p(-x)} \leq \widetilde R \quad \text{ for all } x, y \geq 0.\]
Using this, our second term satisfies the bounds
\begin{align*}
    \frac{1}{h}\E{(\sigma'_h)^2\exp(\sigma'_h)\frac{p(-x - \sigma'_h)}{p(-x)}\IndEvent{\sigma'_h > R}} &\leq \widetilde R \times \frac{1}{h}\Prob{\sigma'_h \geq R}\\
    &= \widetilde R \times \frac{1}{h}\Prob{\sigma'_1 \geq Rh^{-\frac{1}{\alpha-1}}}.
\end{align*}
This bound also depends only on $h$, is finite for each fixed $h$ and is continuous for $h \in (0, 1)$. As $h \to 0$, it converges to a constant (one can calculate this constant by integrating the density in (14.37) of \cite{satobook}), and so there exists some $M_2$ such that the bound is at most $M_2$ for all $h \in (0, 1)$.
\end{proof}

\subsection{Compactness of the $\R$-tree}
With $\widetilde \sigma$ constructed we may now consider a random $\mathbb R$-tree $\mathcal T$ generated by a line-breaking construction with intensity process $\widetilde \sigma$. We already know that $\mathcal T$ is a metric space; we aim to show that it is also compact. Recall that formally, $\mathcal T = (\mathbb R^+, d^\circ)$: it will be useful to introduce the notation $\mathcal T_x = ([0, x], d^\circ)$ for the subtree at time $x$.

Note that by \autoref{cor: sigmean} we have $\int_R^\infty \frac{1}{t\E{\widetilde \sigma_t}}dt < \infty$ for all large enough $R$. This condition matches that in the compactness conjecture from \cite{abrcompactness}, and our proof will follow a similar structure to the proof given there, although we will also require the variance estimate and a few more auxiliary lemmas.

The first lemma is a variation of Lemma 4.7 in \cite{abrcompactness}.
\begin{lemma_restate}\label{lem: abr4.7extension}
    Let $x \leq z \in \R^+$. Then conditionally given $\widetilde \sigma$, the law of $d(z, \mathcal T_x)$ is stochastically dominated by an exponential random variable of rate $\widetilde \sigma_x^2 / \widetilde \sigma_{z-}$. 
\end{lemma_restate}

\begin{proof}
    Recall the notation for cut points $Y_i$ and attachment points $Z_i$. We produce a sequence of points $(z_i, y_i)_{i \geq 0} \in [0, \infty)^2$ as follows. First, $z_0 = z$. Having constructed $z_i$, we define an index $k_i = \max\{k : Y_k < z_i\}$, then set $y_i = Y_{k_i}$ and $z_{i+1} = Z_{k_i}$. That is, $y_i$ is the start of the branch containing $z_i$, and $z_{i+1}$ is the attachment point for this branch (i.e.\ $y_i^+ = z_{i+1}$). The path $z_0 \to z_1 \to \dots$ gives the geodesic from $z$ back to the root, and terminating upon hitting $\mathcal T_x$ gives the geodesic from $z$ to this subtree.

    In particular, defining $T$ to be the least integer such that $z_{T+1} \leq x$, the distance $d(z, \mathcal T_x)$ is given by
        \[ d(z, \mathcal T_x) = \sum_{i=0}^T (z_i - \max\{y_i, x\}) \]
    (note that the maximum is always $y_i$ unless $i = T$, and when $i = T$ it equals $x$ only if the geodesic from $z$ to $\mathcal T_x$ ends at $x$). Our goal now is to show that the terms in the sum can be stochastically dominated by i.i.d.\ exponentials with parameter $\widetilde \sigma_x$, and that $T$ can be dominated by a geometric variable with parameter $\widetilde \sigma_x / \widetilde \sigma_{z-}$ which is independent of the dominating exponentials.

    First of all, note that $(z_i, y_i)$ forms a Markov chain: indeed given values for $(z_i, y_i)$ (and $\widetilde \sigma$), the law of $(z_{i+1}, y_{i+1})$ can be described as follows. First $z_{i+1}$ is sampled according to the normalisation of $d\widetilde \sigma|_{[0, y_i)}$. Then $y_{i+1}$ is the rightmost atom to the left of $z_{i+1}$ in the Poisson process of cuts. Running this procedure as $i$ increases only reveals atoms of the Poisson process from right to left, and so $y_{i+1}$ is conditionally independent of $(z_0, y_0), \dots, (z_{i-1}, y_{i-1})$ given $(z_i, y_i)$.

    We prove the stochastic domination via a coupling: let $S \subset (-\infty, z]$ be (the atoms of) a Poisson process of constant intensity $\widetilde \sigma_x$, coupled with the Poisson process of cuts such that $S \cap (-\infty, x]$ is independent of the cuts, and $S \cap (x, z]$ is a subset of $\{Y_i : i \geq 1\} \cap (x, z]$. For $i \geq 0$, let $w_i$ be the largest element of $S$ smaller than $z_i$. Let $E_i$ be a sequence of i.i.d.\ exponentials of rate $\widetilde \sigma_x$, independent of everything else, and set
        \[ e_i = \begin{cases}z_i - w_i, & i \leq T\\E_i, & i > T.\end{cases}\]
    Observe that $e_i$ are all i.i.d.\ exponentials of rate $\widetilde \sigma_x$, are independent of $T$, and almost surely we have $z_i - \max\{y_i, x\} \leq e_i$ for all $i \leq T$. Thus we have
        \[ d(z, \mathcal T_x) \leq \sum_{i = 0}^T e_i, \quad \text{ a.s.}\]
    Hence it remains only to show $T$ is stochastically dominated by a (zero-indexed) geometric random variable with parameter $p = \frac{\widetilde \sigma_x}{\widetilde \sigma_{z^-}}$. For each $n \geq 0$, let $\mathcal F_n = \sigma((z_i, y_i) : i \leq n, \widetilde \sigma)$ and observe that $T+1$ is an $(\mathcal F_n)$-stopping time. Moreover, on the event $\{ T+1 > n\} \in \mathcal F_n$, we have
        \[ \Prob{T+1 \leq n+1 \given\big \mathcal F_n} = \Prob{z_{n+1} \leq x \given\big \mathcal F_n} = \frac{\widetilde \sigma_x}{\widetilde \sigma_{y_n-}} \geq \frac{\widetilde \sigma_x}{\widetilde \sigma_{z-}}.\]
    On the complement of this event, the conditional probability is 1, and so we have
        \[ \Prob{T+1 \leq n+1 \given\big \mathcal F_n} \geq \frac{\widetilde \sigma_x}{\widetilde \sigma_{z-}} \IndEvent{T + 1 > n} + \IndEvent{T + 1 \leq n}.\]
    More conveniently we can write
        \[ \Prob{T + 1 > n + 1 \given\big \mathcal F_n} \leq \left(1 - \frac{\widetilde \sigma_x}{\widetilde \sigma_{z-}}\right)\IndEvent{T+1 > n}, \]
    and then the tower law yields
         \[ \Prob{T \ge n \given\big \widetilde \sigma} \leq \left(1 -\frac{\widetilde \sigma_x}{\widetilde \sigma_{z-}}\right)^n\Prob{T \geq 0 \given\big \widetilde \sigma} = \left(1 -\frac{\widetilde \sigma_x}{\widetilde \sigma_{z-}}\right)^n, \quad n \ge 0, \]
    completing the proof.
\end{proof}

The strategy to prove compactness is as follows: we take a sequence $\chi_n \to \infty$ of times and consider the trees $\mathcal T_{\chi_n}$. These are all compact subsets of the final tree $\mathcal T$, and we show that they form a Cauchy sequence with respect to the Hausdorff distance $d_{\mathrm{H}}$ in $\mathcal T$. The Hausdorff distance yields a complete metric on compact subsets, and thus $\mathcal T_{\chi_n} \to \mathcal T_\infty$ for some compact $T_\infty \subset \mathcal T$. We then argue that $\mathcal T_\infty = \mathcal T$, and thus $\mathcal T$ is itself a compact space.

To this end, define $\chi_n = \inf\{t \geq 0: \E{\widetilde \sigma_t} > 2^n\}$. The asymptotics for the mean imply that $\chi_n \sim c \, 2^{n/(\alpha-1)}$ for some $c > 0$. To prove $\mathcal T_{\chi_n}$ is a.s.\ Cauchy we show the following:

\begin{lemma_restate}\label{lem: cpctcauchyestimate}
    Almost surely there exists a (random) $n_0 \geq 1$ such that for all $n \geq n_0$ we have
        \[ d_{\mathrm{H}}(\mathcal T_{\chi_{n-1}}, \mathcal T_{\chi_n}) \leq \frac{18 \log(\chi_n)}{2^n} + \chi_n^{-1}. \]
\end{lemma_restate}
\begin{proof}
    A straightforward second-moment argument and Borel--Cantelli reveal that $\widetilde \sigma_{\chi_n} \sim 2^n$ a.s. Thus we can find a (random) $n_1 \geq 1$ such that
        \[ \widetilde \sigma_{\chi_n} \in \left[0.9 \times 2^n, 1.1 \times 2^n\right] \quad \text{ for all } n \geq n_1 - 1.\]
    In what follows we work with $n \geq n_1$ only. For any $t \in [0, \chi_n]$, let $E_n(t)$ be the event that $d(t, \mathcal T_{\chi_{n-1}}) \geq \frac{18\log(\chi_n)}{2^n}$. By Fubini's theorem and \autoref{lem: abr4.7extension}, we have
    \begin{align*}
        \E{\int_0^{\chi_n}\! \IndEvent{E_n(t)} dt \,\,\Bigg|\, \widetilde \sigma} &= \int_0^{\chi_n}\Prob{E_n(t) \,\big|\, \widetilde \sigma}\,dt\\
        &\leq \int_0^{\chi_n}\Prob{\Expo{\frac{\widetilde \sigma_{\chi_{n-1}}^2}{\widetilde \sigma_{\chi_n-}}} \geq \frac{18\log(\chi_n)}{2^n} \,\Bigg|\, \widetilde \sigma}\,dt\\
        &= \chi_n \exp\left(-\frac{18\log(\chi_n)\widetilde \sigma_{\chi_{n-1}}^2}{2^n \widetilde \sigma_{\chi_n-}}\right).
    \end{align*}
    Using $\widetilde \sigma_{\chi_{n-1}} \geq 0.9 \times 2^{n-1}$ and $\widetilde \sigma_{\chi_n} \leq 1.1 \times 2^n$ and some crude numerical estimates, we see the above quantity is at most $\chi_n^{-2}$. Then by Markov's inequality, we have
        \[ \Prob{\int_0^{\chi_n}\IndEvent{E_n(t)}\,dt > \chi_n^{-1} \,\Bigg|\, \widetilde \sigma} \leq \chi_n^{-1}\]
    for all $n \geq n_1$. Note that $\sum_{n \geq n_1}\chi_n^{-1} < \infty$ a.s.\ and so, by Borel--Cantelli, we almost surely have
        \[ \int_0^{\chi_n} \IndEvent{E_n(t)}dt \leq \chi_n^{-1} \quad \text{ for all } n \geq n_2, \]
    where $n_2$ is some (random) finite bound. We claim that the estimate on the Hausdorff distance holds for all such $n$. Indeed, if it does not hold, then there is some point $x \in \mathcal T_{\chi_n}$ with $d(x, \mathcal T_{\chi_{n-1}}) > \frac{18\log(\chi_n)}{2^n} + \chi_n^{-1}$. But then there is a section of the geodesic path from $x$ to $\mathcal T_{\chi_{n-1}}$ of length $d(x, \mathcal T_{\chi_{n-1}}) - \frac{18\log(\chi_n)}{2^n} > \chi_n^{-1}$ on which $\IndEvent{E_n(t)} = 1$, contradicting the integral bound.
\end{proof}

We can now put everything together:
\begin{theorem}\label{thm: alphaiscompact}
    The $\R$-tree $(\Tcal,d)$ built from the line-breaking construction with intensity $\widetilde \sigma$ is almost surely compact.
\end{theorem}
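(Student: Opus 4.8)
The plan is to realise $(\Tcal,d)$ as the Hausdorff limit of the increasing family of compact subtrees $(\Tcal_{\chi_n})_{n \ge 1}$, using \autoref{lem: cpctcauchyestimate} to control the increments, and then to identify this limit with $\Tcal$ itself. I begin with two structural facts about the $\Tcal_{\chi_n}$. Conditionally given $\widetilde \sigma$, the cut points $(Y_k)$ form an inhomogeneous Poisson process of intensity $\widetilde \sigma_t\,dt$; since $\widetilde \sigma$ is increasing and a.s.\ finite, $\int_0^{\chi_n}\widetilde \sigma_t\,dt \le \chi_n \widetilde \sigma_{\chi_n} < \infty$ almost surely, so only finitely many cuts occur before time $\chi_n$. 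Hence $\Tcal_{\chi_n}$ is a finite union of line-segments of finite length glued at finitely many points, i.e.\ a finite $\R$-tree, and in particular compact. Moreover $x \mapsto \Tcal_x$ is increasing, so $(\Tcal_{\chi_n})_{n \ge 1}$ is an increasing sequence of compact subsets of the complete metric space $(\Tcal,d)$ (recall $\Tcal$ is a completion by construction).

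Next I would check the Cauchy property. The recipe $\chi_n = \inf\{t : \E{\widetilde \sigma_t} > 2^n\}$ together with \autoref{cor: sigmean} gives $\chi_n \sim c\,2^{n/(\alpha-1)}$, so that $\log \chi_n = \tfrac{n\log 2}{\alpha - 1} + O(1)$ and therefore
\[
\sum_{n \ge 1}\left(\frac{18\log \chi_n}{2^n} + \chi_n^{-1}\right) < \infty .
\]
Combining this with \autoref{lem: cpctcauchyestimate} (which bounds $d_{\mathrm H}(\Tcal_{\chi_{n-1}}, \Tcal_{\chi_n})$ by the $n$-th summand for all $n$ past an a.s.-finite $n_0$), the sequence $(\Tcal_{\chi_n})_{n \ge 1}$ is almost surely Cauchy for the Hausdorff metric $d_{\mathrm H}$ in $\Tcal$. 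Since the collection of non-empty compact subsets of a complete metric space is itself complete under $d_{\mathrm H}$, there is a (random) compact set $\Tcal_\infty \subseteq \Tcal$ with $\Tcal_{\chi_n} \to \Tcal_\infty$.

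Finally I would identify $\Tcal_\infty$ with $\Tcal$. For an increasing sequence of sets that converges in Hausdorff distance, the limit equals the closure of the union, so $\Tcal_\infty = \overline{\bigcup_{n \ge 1}\Tcal_{\chi_n}}$; since $\chi_n \to \infty$ we have $\bigcup_{n}[0,\chi_n] = \R^+$, which is dense in $\Tcal$ by construction, and hence $\Tcal_\infty = \overline{\R^+} = \Tcal$. Thus $\Tcal$ is compact. The bulk of the work has already been done in \autoref{lem: abr4.7extension} and \autoref{lem: cpctcauchyestimate}, and I expect the remaining steps to be routine; the only points needing care are the verification that finitely many cuts precede any fixed time (so that the $\Tcal_{\chi_n}$ are genuinely compact) and the elementary topological fact about Hausdorff limits of increasing compact sets used in the last step.
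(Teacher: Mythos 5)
Your proposal is correct and follows essentially the same route as the paper: summability of $\frac{18\log\chi_n}{2^n}+\chi_n^{-1}$ plus \autoref{lem: cpctcauchyestimate} gives that $(\Tcal_{\chi_n})$ is a.s.\ Cauchy for $d_{\mathrm H}$, completeness yields a compact limit $\Tcal_\infty$, and density of $\bigcup_n \Tcal_{\chi_n} = \R^+$ in $\Tcal$ identifies $\Tcal_\infty=\Tcal$. The only cosmetic differences are that you spell out why each $\Tcal_{\chi_n}$ is compact (finitely many cuts before time $\chi_n$), which the paper takes for granted, and you phrase the last step via the closure-of-the-union characterisation of increasing Hausdorff limits rather than the paper's contradiction argument — both are fine.
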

\begin{proof}
    By \autoref{lem: cpctcauchyestimate} and the observation that $\sum_{n=1}^\infty\left(\frac{18\log(\chi_n)}{2^n} + \chi_n^{-1}\right) < \infty$, we see that $(\mathcal T_{\chi_n})$ is almost surely Cauchy with respect to the Hausdorff distance. By completeness, we deduce that a.s.\ there exists a compact $\mathcal T_\infty \subset \mathcal T$ such that $\mathcal T_{\chi_n} \to \mathcal T_\infty$ a.s.\ in the Hausdorff metric.

    We show that (on the event that $\mathcal T_\infty$ exists) $\mathcal T_\infty = \mathcal T$. Suppose not; then there exists some point $x \in \mathcal T \setminus \mathcal T_\infty$. As $\mathcal T_\infty$ is compact, it is closed in $\mathcal T$ and so $x$ has positive distance from $\mathcal T_\infty$. Let $r = d(x, \mathcal T_\infty) > 0$. It follows that $d(x, \mathcal T_{\chi_n}) \geq r$ for all $n$. However, observe that
        \[ \bigcup_{n \geq 1} \mathcal T_{\chi_n} = \bigcup_{t \geq 0} \mathcal T_t \]
    and, by construction of $\mathcal T$, the latter is a dense subset of $\mathcal T$. Thus we may find a sequence $x_k \in \bigcup_{n \geq 1} \mathcal T_{\chi_n}$ such that $d(x, x_k) \to 0$. But for each $k$ there is some $n = n(k)$ such that $x_k \in \mathcal T_{\chi_n}$, and then
        \[ d(x, x_k) \geq d(x, \mathcal T_{\chi_{n(k)}}) \geq r \]
    which is a contradiction.
\end{proof}

\subsection{The mass measure}

We now seek to equip this compact metric space with a probability measure. For each $n$, recall that $\mu_n = \frac 1n \sum_{i = 1}^n \delta_{Y_i}$ is the uniform measure on the leaves of $\mathcal T(n) = \mathcal T_{Y_n}$. We will construct a weak limit of $(\mu_n)$ on $\mathcal T$.

For each $n$, let $\pi_n : \mathcal T \to \mathcal T(n)$ be the projection map onto $\mathcal T(n)$. We are interested in subsets of $\mathcal T$ which are pre-images under these projection maps.

\begin{lemma_restate}\label{lem: meascvgprojections}
    Let $n \geq 1$ and let $\widetilde S \subset \mathcal T(n)$ be measurable. Set $S = \pi_n^{-1}(\widetilde S)$. Then the sequence $(\mu_i(S))_{i \geq n}$ converges almost surely to a limit in $[0, 1]$.
\end{lemma_restate}

To prove this lemma, we require the following result about time-dependent P\'olya urns (part of which is Theorem 1 of Pemantle~\cite{Pemantle}). We defer the straightforward proof to the Appendix.

\begin{restatable}{proposition_restate}{polyaurncvg}\label{prop: polyaurncvg}
    Let $A_0, M_0, (m_n)_{n \geq 1}$ be (possibly random) positive real numbers with $A_0 < M_0$. Write $M_n = M_0 + \sum_{i=1}^n m_i$ for brevity. Conditionally given all of these, let $(A_i)_{i \geq 1}$ be a Markov process such that
\begin{align*}
    \Prob{A_i = A_{i-1} + m_i \given\Big (A_j)_{0 \leq j < i}, (M_j)_{j \geq 0}} &= \frac{A_{i-1}}{M_{i-1}}, \quad \text{ and}\\
    \Prob{A_i = A_{i-1} \given\Big (A_j)_{0 \leq j < i}, (M_j)_{j \geq 0}} &= \frac{M_{i-1} - A_{i-1}}{M_{i-1}}.
\end{align*}
    Then we can find a random variable $X_\infty$, supported on $[0, 1]$, such that the two sequences
        \[ \left(\frac{A_n}{M_n}\right)_{n \geq 1} \quad \text{ and } \quad \left(\frac{\#\{i \leq n : A_i \neq A_{i-1}\}}{n}\right)_{n \geq 1} \]
    converge almost surely to $X_\infty$.
\end{restatable}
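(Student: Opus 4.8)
The plan is to recognise $(A_n/M_n)_{n\ge 1}$ as a bounded martingale and then transfer its convergence to the hit-frequency sequence by means of a martingale strong law together with Cesàro averaging. Throughout I would work conditionally on $\mathcal{G} = \sigma\big(A_0, M_0, (m_n)_{n\ge 1}\big)$, so that $M_n$ and $m_n$ are constants, and set $\mathcal{F}_n = \mathcal{G} \vee \sigma(A_1,\dots,A_n)$; a conditional a.s.\ statement holding for a.e.\ value of $\mathcal{G}$ yields the unconditional a.s.\ statement claimed.

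First I would verify the martingale property by direct computation. From the transition rule,
\[
\E{A_i \mid \mathcal{F}_{i-1}} = (A_{i-1}+m_i)\frac{A_{i-1}}{M_{i-1}} + A_{i-1}\frac{M_{i-1}-A_{i-1}}{M_{i-1}} = \frac{A_{i-1}}{M_{i-1}}\big(M_{i-1}+m_i\big) = \frac{A_{i-1}}{M_{i-1}}\,M_i,
\]
so $\E{A_i/M_i \mid \mathcal{F}_{i-1}} = A_{i-1}/M_{i-1}$. Since an immediate induction gives $0 < A_n < M_n$, the process $(A_n/M_n)$ is a martingale valued in $(0,1)$, hence converges almost surely (and in $L^2$) to a limit $X_\infty$ supported on $[0,1]$. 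This is exactly the content of Theorem 1 of Pemantle~\cite{Pemantle}, which one could alternatively simply cite.

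For the second sequence, write $N_n = \#\{i\le n : A_i \ne A_{i-1}\} = \sum_{i=1}^n \IndEvent{A_i\ne A_{i-1}}$ and observe $\Prob{A_i \ne A_{i-1}\mid \mathcal{F}_{i-1}} = A_{i-1}/M_{i-1}$. Thus $\epsilon_i := \IndEvent{A_i\ne A_{i-1}} - A_{i-1}/M_{i-1}$ are martingale differences bounded by $1$ in absolute value, so $\sum_{i\ge 1}\epsilon_i/i$ is an $L^2$-bounded martingale (its increments have $L^2$-norms summing via $\sum_i i^{-2} < \infty$) and therefore converges a.s.; Kronecker's lemma then gives $n^{-1}\sum_{i=1}^n\epsilon_i \to 0$ a.s., i.e.
\[
\frac{N_n}{n} - \frac{1}{n}\sum_{i=1}^n \frac{A_{i-1}}{M_{i-1}} \to 0 \quad \text{a.s.}
\]
Finally, since $A_{i-1}/M_{i-1}\to X_\infty$ a.s.\ by the first part, the Cesàro averages $n^{-1}\sum_{i=1}^n A_{i-1}/M_{i-1}$ also converge to $X_\infty$ a.s.; combining the two displays gives $N_n/n \to X_\infty$ a.s.

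There is no real obstacle here — the proof is a standard assembly of bounded-martingale convergence, a martingale strong law, and Cesàro summation. The only points needing a little care are the bookkeeping of the conditioning on $\mathcal{G}$ (so that the $M_n$ are genuinely deterministic when computing conditional expectations) and ensuring that the hit-frequency sequence converges to the \emph{same} limit $X_\infty$ rather than merely to \emph{some} limit, which is precisely what the Cesàro step provides.
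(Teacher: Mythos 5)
Your proof is correct and follows essentially the same route as the paper's: show $A_n/M_n$ is a bounded martingale converging a.s.\ to $X_\infty$, then write the hit-frequency as a centred martingale part plus the Ces\`aro averages of $A_{i-1}/M_{i-1}$, which also converge to $X_\infty$. The only (immaterial) difference is that you kill the martingale part $n^{-1}\sum_{i\le n}\epsilon_i$ via the $L^2$ strong law and Kronecker's lemma, whereas the paper uses Azuma--Hoeffding plus Borel--Cantelli; both are standard and valid.
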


\begin{proof}[Proof of \autoref{lem: meascvgprojections}]
    Take $n = n_0$ and pick $\widetilde S \subset \mathcal T(n_0).$ Let $S = \pi_{n_0}^{-1}(\widetilde S)$ be as in the statement. Let $\nu$ be the unique measure on $\mathbb R^+$ such that $\nu([0, x]) = \widetilde \sigma_x$ for all $x \geq 0$, and note that we can view this as a measure on $\mathcal T$ such that $\nu(\mathcal T(n)) = \widetilde \sigma_{Y_n} < \infty$ for each $n$. Now set
        \[ A_i = \nu(S \cap \mathcal T(i)) \quad \text{ and } \quad M_i = \nu(\mathcal T(i)) = \widetilde \sigma_{Y_{i}}\]
    for $i \geq n_0$, also defining $m_i = M_i - M_{i-1}$ for $i > n_0$. We note that when going from $A_{i-1}$ to $A_i$, the value either stays the same (if $Z_{i-1} \not \in S$) or increases by $m_i$ (if $Z_{i-1} \in S$), and conditionally given everything so far, the probability of the latter is $\frac{A_{i-1}}{M_{i-1}}$. Thus $(A_i)_{i \geq n_0}$ and $(M_i)_{i \geq n_0}$ form a time-dependent P\'olya urn as in \autoref{prop: polyaurncvg} (up to shifting the indices). It follows that the sequence
        \[ \left(\frac{\#\{i \leq n : A_i \neq A_{i-1}\}}{n}\right)_{n \geq 1}\]
    almost surely converges to a limit $X_\infty$ in $[0, 1]$. For each fixed $n > n_0$, the numerator is counting the number of indices $i$ with $n_0 \leq i \leq n$ such that $Z_{i-1} \in S$. For this range of indices we have $Z_{i-1} \in S$ iff $Y_i \in S$, and so we can estimate
        \[ \left|\frac{\#\{i \leq n : A_i \neq A_{i-1}\}}{n} - \mu_n(S)\right| \leq \frac{n_0}{n} \to 0.\]
    We deduce that $\mu_n(S) \to X_\infty$ almost surely.
\end{proof}

\begin{corollary_restate} \label{thm: measfunc}
    Almost surely the following holds: for any function $g : \mathcal T \to \mathbb R$ which is bounded and continuous, the sequence $(\mu_n(g))_{n \geq 1}$ converges to a limit in $\mathbb R$, where $\mu_n(g) := \int_{\mathcal T}g(x)\,d\mu_n(x).$
\end{corollary_restate}
\begin{proof}
    For each $n$, the subspace $\mathcal T_{Y_n}$ is compact, hence totally bounded, so we may partition it into finitely many subsets each with diameter at most $1/n$. Write $I^n_1, I^n_2, \dots, I^n_N$ for this partition, where $N = N(n)$ is the number of pieces. Without loss of generality we may take these partitions to be nested, in the following sense: for any $n < k$, the induced partition of $\mathcal T(k)$ from that of $\mathcal T(n)$ with pieces $(\pi_n^{-1}(I^n_i) \cap \mathcal T(k))_{i \leq N(n)}$ is coarser than the partition $(I^k_i)_{i \leq N(k)}$. That is, each piece $I^k_i$ is a subset of some $\pi_n^{-1}(I^n_j) \cap \mathcal T(k)$. Now pick arbitrary elements $x^n_i \in I^n_i$ for all $n$ and $i$.

    Write $J^n_i = \pi_n^{-1}(I^n_i)$, and note that for each fixed $n$, the sets $(J^n_i)_{i \leq N(n)}$ partition $\mathcal T$. By \autoref{thm: alphaiscompact} and \autoref{lem: meascvgprojections}, the event
        \[ \mathcal G = \{ \mathcal T \text{ is compact} \} \cap \{ (\mu_k(J^n_i))_{k \geq 1} \text{ converges for all pairs } (n, i) \text{ with } i \leq N(n)\}\]
    has probability 1. We show that the conclusion of the theorem holds on this event. Since $\mathcal T$ is compact, the Hausdorff distance $d_{\mathrm{H}}(\mathcal T, \mathcal T(n))$ between $\mathcal T$ and $\mathcal T(n)$ is finite and tends to zero as $n \to \infty$. It follows that we can bound
        \[ \delta_n := \max_{1 \leq i \leq N(n)}\text{diam}(J^n_i) \leq \frac{1}{n} + 2d_{\mathrm{H}}(\mathcal T, \mathcal T(n)) \to 0.\]

    Write $\mu(J^n_i) = \lim_{k \to \infty} \mu_k(J^n_i)$. The first step is to show that, for any bounded continuous function $g$, the limit
        \[ \lim_{n \to \infty} \sum_{i=1}^{N(n)} \mu(J^n_i)g(x^n_i) =: \mu(g)\]
    exists. We will do this by showing the sequence is Cauchy. Fix any $\varepsilon > 0$. By compactness of $\mathcal T$, $g$ is uniformly continuous, so there exists $\delta > 0$ such that $d(u, v) \leq \delta$ implies $|g(u) - g(v)| \leq \varepsilon$. Now there exists $n_0 \in \mathbb N$ such that $\delta_n < \delta$ for all $n \geq n_0$. For any $m > n > n_0$, we partition $[N(m)]$ into $N(n)$ pieces given by
        \[ F_i = \{ j \in [N(m)]: I^m_j \subset \pi_n^{-1}(I^n_i) \cap \mathcal T(m) \}, \quad\quad 1 \leq j \leq N(n). \]
    Now we compute
    \begin{align*}
        \left|\sum_{i=1}^{N(n)} \mu(J^n_i)g(x^n_i) - \sum_{j=1}^{N(m)}\mu(J^m_j)g(x^m_j)\right| &\leq \sum_{i=1}^{N(n)}\left|\mu(J^n_i)g(x^n_i) - \sum_{j \in F_i}\mu(J^m_j)g(x^m_j)\right|\\
        &= \sum_{i=1}^{N(n)}\left|\sum_{j \in F_i}\mu(J^m_j)\left(g(x^n_i) - g(x^m_j)\right)\right|\\
        &\leq \sum_{i=1}^{N(n)}\sum_{j \in F_i} \mu(J^m_j)\left|g(x^n_i) - g(x^m_j)\right|\\
        &\leq \sum_{i=1}^{N(n)}\sum_{j \in F_i} \mu(J^m_j) \cdot \varepsilon\\
        &= \varepsilon,
    \end{align*}
    where in the penultimate line we have used that both $x^n_i$ and $x^m_j$ lie within $J^n_i$, which has diameter at most $\delta_n < \delta$.

    We now prove that for any bounded continuous $g$, we have $\mu_n(g) \to \mu(g)$, where $\mu(g)$ is the limiting value obtained above. For any $m > n$ we may estimate
    \begin{multline} \label{eq: mu_nmu}
        \left|\mu_m(g) - \mu(g)\right|  \\
        \quad \leq \left|\mu_m(g) - \sum_{i=1}^{N(n)}\mu_m(J^n_i)g(x^n_i)\right| + \sum_{i=1}^{N(n)}\left|\mu_m(J^n_i) - \mu(J^n_i)\right||g(x^n_i)|
        + \left|\sum_{i=1}^{N(n)}\mu(J^n_i)g(x^n_i) - \mu(g)\right|.
    \end{multline}
    By writing $\mu_m(g) = \sum_{i=1}^{N(n)}\mu_m(g\Ind_{J^n_i})$, we can crudely bound the first term by $\sup\{|g(x) - g(x^n_i)| : x \in J^n_i, i \leq N(n)\}$. Now given $\varepsilon > 0$, we proceed as follows. By uniform continuity of $g$ and the diameter bound on $J^n_i$, we can find $n_1$ such that, whenever $n \geq n_1$, we have $\sup\{|g(x) - g(x^n_i)| : x \in J^n_i, i \leq N(n)\} \leq \varepsilon / 3$. By the convergence to $\mu(g)$ shown above, we can also find $n_2$ such that, whenever $n \geq n_2$, the third term in (\ref{eq: mu_nmu}) is at most $\varepsilon / 3$. Now take $n = n_1 \vee n_2$. For this choice of $n$, the second term in (\ref{eq: mu_nmu}) tends to 0 as $m \to \infty$, so there is some $m_0$ such that the second term is at most $\varepsilon / 3$ whenever $m \geq m_0$. Thus, for all $m \geq m_0$ we have $|\mu_m(g) - \mu(g)| \leq \varepsilon$, which establishes the required convergence.
\end{proof}

\begin{theorem}\label{thm: weakcvglinebreakmeasure}
   The sequence of measures $(\mu_n)_{n \ge 1}$ possesses a weak limit $\mu$ which is a probability measure on $\mathcal{T}$.
\end{theorem}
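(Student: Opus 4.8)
The plan is to deduce the result almost immediately from \autoref{thm: alphaiscompact} and \autoref{thm: measfunc} by means of the Riesz representation theorem. Throughout we work on the almost-sure event $\mathcal G$ appearing in \autoref{thm: measfunc}, on which $\mathcal T$ is compact and $(\mu_n(g))_{n \ge 1}$ converges for every bounded continuous $g : \mathcal T \to \R$; off this event we may simply set $\mu = \delta_{\text{root}}$. On $\mathcal G$, since $\mathcal T$ is compact every continuous function on it is bounded, so we may define $\Lambda : C(\mathcal T) \to \R$ on the whole of $C(\mathcal T)$ by $\Lambda(g) = \lim_{n \to \infty} \mu_n(g)$.

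First I would verify that $\Lambda$ is a positive, normalised, bounded linear functional: linearity is inherited from the linearity of each $\mu_n$ and of limits; positivity ($g \ge 0 \Rightarrow \Lambda(g) \ge 0$) and the bound $|\Lambda(g)| \le \|g\|_\infty$ pass to the limit from the corresponding properties of each $\mu_n$; and $\Lambda(\Ind) = 1$ because each $\mu_n$ is a probability measure. Applying the Riesz--Markov--Kakutani representation theorem on the compact metric space $\mathcal T$ then yields a unique finite signed Borel measure $\mu$ with $\Lambda(g) = \int_{\mathcal T} g\,d\mu$ for all $g \in C(\mathcal T)$; positivity of $\Lambda$ forces $\mu$ to be non-negative, and $\mu(\mathcal T) = \Lambda(\Ind) = 1$, so $\mu$ is a probability measure. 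Since $\mu_n(g) \to \mu(g)$ for every $g \in C(\mathcal T)$, and since on a compact metric space weak convergence of probability measures is precisely convergence of the integrals of all continuous (hence bounded) functions, this is exactly the assertion $\mu_n \Rightarrow \mu$.

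There is no serious obstacle; the one point deserving a little care is to confirm that $\mu$ is a genuine random measure, i.e.\ that $\omega \mapsto \mu^\omega$ is measurable as a map into the space $\mathcal P(\mathcal T)$ of probability measures on $\mathcal T$ with the weak topology (which is compact and metrizable since $\mathcal T$ is compact metric). This holds because the Borel $\sigma$-algebra on $\mathcal P(\mathcal T)$ is generated by the evaluation maps $\nu \mapsto \nu(g)$ for $g$ ranging over a countable dense subset of $C(\mathcal T)$ (which exists by separability of $C(\mathcal T)$), and for each such $g$ the map $\omega \mapsto \mu^\omega(g) = \lim_{n} \mu_n^\omega(g)$ is measurable as a pointwise limit of measurable functions. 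Alternatively, one could bypass Riesz entirely: $\mathcal P(\mathcal T)$ is compact, so $(\mu_n)$ has weak subsequential limits, and \autoref{thm: measfunc} forces all of them to assign the same integral to every continuous function, hence to coincide, giving full convergence to a limit $\mu$.
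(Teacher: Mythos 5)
Your proposal is correct and follows essentially the same route as the paper: both deduce the result from \autoref{thm: measfunc} (the simultaneous a.s.\ convergence of $\mu_n(g)$ for all bounded continuous $g$) by viewing $g \mapsto \lim_n \mu_n(g)$ as a positive linear functional and invoking the Riesz--Markov representation theorem, then checking the limit has total mass 1. Your additional remarks on measurability of the random measure and the alternative compactness-of-$\mathcal P(\mathcal T)$ argument are fine but not needed beyond what the paper does.
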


\begin{proof}
    Note that the convergence in \autoref{thm: measfunc} occurs for all bounded continuous functions simultaneously --- a much stronger statement than only having almost sure convergence for each individual bounded continuous function. On this probability 1 event, we can define a map $\mu$ from bounded continuous functions on $\mathcal T$ to the reals given by
        \[ \mu(g) := \lim_{n \to \infty} \mu_n(g). \]
    This is a positive linear functional and $\mathcal T$ is both locally compact and Hausdorff, so by the Riesz--Markov representation theorem it is realised by a Borel measure on $\mathcal T$. In a slight abuse of notation we will also call this measure $\mu$. It is easy to see that this measure has mass 1, i.e.\ is a probability measure.
\end{proof}

Note that it now follows straightforwardly that
\begin{equation} \label{eq:asGHPconv}
(\mathcal T(k), d|_{\mathcal{T}(k)}, \mu_k) \to (\mathcal{T}, d, \mu)
\end{equation}
almost surely, as $k \to \infty$, for the Gromov--Hausdorff--Prokhorov distance.

\section{Line-breaking constructions for discrete trees}\label{sec: discrete}
We begin by introducing the Ulam--Harris notation for rooted ordered trees. Set
    \[ \mathcal U = \bigcup_{n = 0}^\infty \mathbb N^n, \]
with the convention that $\mathbb N^0 = \{ \varnothing \}$. We write elements of $\mathbb N^n$ as $u = u_1\dots u_n$, for any $n \geq 1$, and we call $n$ the length of $u$, for which we write $n = |u|$. This gives the structure of a rooted ordered tree: indeed, $\varnothing$ is the root and any $u = u_1\dots u_n \in \mathbb N^n$ is a child of $u_1\dots u_{n-1}$.

A \emph{(locally finite) rooted ordered tree} is a subset $T \subset \mathcal U$ such that
    \begin{enumerate}[(i)]
        \item $\varnothing \in T$;
        \item For any $u \in T$ (including $u = \varnothing$) there is an integer $c(u) \geq 0$ such that for all $i \in \mathbb N$,
            \[ ui \in T \iff i \leq c(u). \]
    \end{enumerate}

 Let $\mu$ be a probability measure on $\mathbb N_0 = \{0, 1, 2, \dots \}$. A \emph{Bienaym\'e tree} with offspring distribution $\mu$ is a random rooted ordered tree $T \subset \mathcal U$ such that all integers $c(u), u \in \mathcal U$ are i.i.d.\ with distribution $\mu$. (Note that in this definition we have specified $c(u)$ even when $u \not \in T$ --- this makes the definition easier as we do not need to condition on the event that $u$ is a vertex in the tree.) For $\xi$ a random variable taking values in $\mathbb N_0$, we will often speak of Bienaym\'e trees with offspring distribution $\xi$: this is shorthand for a Bienaym\'e tree with offspring distribution $(p_k)$, where $p_k = \Prob{\xi = k}$.

Let $\mathbb T_n$ denote the set of rooted labelled trees with $n$ vertices. The Bienaymé trees (conditioned on size) will be our primary model for discrete random trees. It will be convenient for our purposes to think of them as random elements of $\mathbb{T}_n$ rather than $\mathcal U$; we will explain how to do this in \autoref{sec: relation} below. 

\subsection{Reverse Pr\"ufer codes}
We now present a bijection between $\mathbb T_n$ and $[n]^{n-1}$, the space of sequences of length $n-1$ taking values in $\{1, 2, \dots, n\}$. This is a variant of the classical Pr\"ufer code, and will be useful for constructing random discrete trees. This variant has been studied in \cite{seoshin} and is closely related to the bijection presented in \cite{surveypaper}.

Given a rooted labelled tree $(t, \rho)$, we produce a codeword by `revealing' the tree one edge at a time. We first reveal the path from the root to the lowest labelled (non-root) vertex, then the path from this branch to the lowest labelled vertex not already included, and so on. Formally, set $C_0 = 1$ and $L_1 = \min\{ i \geq 1 : i \neq \rho\}$. Say the path from $\rho$ to $L_1$ consists of $C_1 \geq 2$ vertices, namely $\rho = w_1, w_2, \dots, w_{C_1-1}$ and $L_1$ appearing in this order. Now inductively for $k \geq 2$, we take
    \[ \mathcal U_{k-1} = \bigcup_{j=1}^{k-1}\{w_{C_{j-1}}, \dots, w_{C_j-1}, L_j\}, \quad L_k = \min( [n] \setminus \mathcal U_{k-1}) \]
and consider the path from $\mathcal U_{k-1}$ to $L_k$: suppose its vertices are $w_{C_{k-1}}, w_{C_{k-1}+1}, \dots, w_{C_{k}-1}$ and $L_k$ in this order (so $w_{C_{k-1}} \in \mathcal U_{k-1}$). We repeat this until $\mathcal U_k = [n]$, i.e.\ we have exhausted the entire tree. Note that $\#\mathcal U_k = C_k$ for each $k \ge 1$, and so if we stop after exploring $B$ branches, then $C_B = n$. We take as codeword the sequence $(w_i)_{1 \leq i \leq C_B - 1}$, which has length $n-1$.

We may immediately observe that the multiplicity of a label in the codeword is equal to the out-degree of the corresponding vertex. In particular, the leaves never appear in the codeword.

We can also reverse this construction --- given a codeword $(w_i)_{1 \leq i \leq n-1}$, we show how to produce a corresponding tree. We know $w_1$ is the root. We calculate the values of $L_j$ and $C_j$ first: we know $C_0 = 1$ and $L_1 = \min\{ i \geq 1 : i \neq w_1\}$. New branches start whenever the codeword repeats a label or a previously revealed leaf, and so for $k \geq 1$,
    \[ C_k = \min \left\{ i \geq C_{k-1} + 1: w_i \in \{w_1, \dots, w_{i-1}\} \cup \{L_1, \dots, L_k\}\right\}\]
and
    \[ L_{k+1} = \min \left\{ i \geq 1 : i \not \in \bigcup_{j=1}^k \{ w_{C_{j-1}}, \dots, w_{C_j -1}, L_j \}\right\}\]
Now given these values, we can form the branches $(w_{C_{k-1}}, \dots, w_{C_k-1}, L_k)$ and glue these together to build a tree.

Write $\psi_n : [n]^{n-1} \to \mathbb T_n$ for this mapping. Note that for any vector $\bm d \in \mathbb N_0^n$ with $\sum_{i=1}^n d_i = n-1$, we can consider the set of codewords
    \[ W_{n, \bm d} = \{ (w_1, \dots, w_{n-1}) \in [n]^{n-1} : \#\{j : w_j = i\} = d_i \text{ for all } i \in [n]\}\]
and note that $\psi_n|_{W_{n, \bm d}}$ bijects to labelled rooted trees with out-degree sequence $\bm d$ (i.e.\ such that the vertex with label $i$ has out-degree $d_i$).

We observe that revealing entries in the codeword one at a time gives a way to grow a tree. This is a deterministic construction, but gives a nice way to grow uniformly random trees with a given degree sequence (resp. uniformly random trees) using the fact that the entries of a uniform element of $W_{n, \bm d}$ (resp. of $[n]^{n-1}$) have a relatively simple law. Here is an explicit description of this growth procedure: throughout, we keep track of an `active vertex' from which we expand the tree. In the first step, we reveal the root (whose label is $w_1$) and mark it as the active vertex. At step $k \geq 2$, we reveal the $k$th entry $w_k$ in the codeword, and
\begin{enumerate}[(i)]
    \item if this label has not yet appeared in the tree, we add a new vertex labelled $w_k$, connect it to the active vertex, and make $w_k$ the new active vertex;
    \item if the label has previously appeared in the codeword, we add a new vertex, whose label is the lowest index not yet present in the tree, connect it to the active vertex, and then make $w_k$ the new active vertex;
    \item if the label has previously appeared in the tree but not in the codeword, we add a new vertex exactly as in case (ii), then make $w_k$ the new active vertex.
\end{enumerate}

In this sense, we have a discrete analogue of the line-breaking construction for CRTs: if we think of repeated labels as rare events, then most of the time we are growing a branch, occasionally interrupting the process to move to a new position and start growing a new branch.

\subsection{Relation to Bienaym\'e trees} \label{sec: relation}
We now consider randomising the degree sequence $\bm d$, and conditionally given $\bm d$ sampling a uniform element of $W_{n, \bm d}$; this will turn out to give a construction of (a labelled, unordered version of) a Bienaym\'e tree conditioned to have fixed size $n$.

Let $\xi$ be a random variable taking values in $\mathbb N_0$ and such that $\E{\xi} = 1$; then $\xi$ is the offspring distribution of a critical Bienaym\'e tree. For simplicity we shall assume that $\gcd\{k : \Prob{\xi = k} > 0\} = 1$. We consider sampling a random vector $\bm D \in \mathbb N_0^n$ with the distribution of $n$ i.i.d.\ copies of $\xi$ conditioned to have sum $n-1$. Now conditionally given $\bm D$, we sample a uniformly random $\bm w \in W_{n, \bm D}$, and use the bijection $\psi_n$ to map it to a rooted labelled tree $(T^n, \rho)$.

We prefer to work with ordered trees, so we use the following random procedure to convert our labelled tree into an ordered tree. First, we map the root $\rho$ to $\varnothing$. Now for any vertex $i$ of $T^n$ that is not a leaf, we identify its children and give them a uniformly random ordering, independently of everything constructed so far. Finally, we forget our original labelling. We claim that this gives a size-conditioned Bienaym\'e tree with offspring distribution $\xi$.

\begin{proposition_restate}\label{prop: pruferisbienayme}
    The ordered tree corresponding to $(T^n, \rho)$ has the law of a Bienaym\'e tree with offspring distribution $\xi$ conditioned to have $n$ vertices. 
\end{proposition_restate}
\begin{proof}
    Fix a rooted ordered tree $t$ of size $n$; we study its probability of arising from our construction. It will be helpful to consider the rooted labelled ordered tree arising in the process just before forgetting the labelling: there are $n!$ possibilities for such a tree that give rise to the final tree $t$, so we fix one such rooted labelled ordered tree $\widetilde t$ and study the probability that $\widetilde t$ arises instead.

    Consider traversing $\tilde t$ in lexicographical order: let $d_1, d_2, \dots, d_n$ be the out-degrees of the vertices in this order (so, for example, $d_1$ is the degree of the root, regardless of the labelling). Let $\sigma$ be the permutation of $[n]$ such that, for all $i$, the $i$th vertex in lexicographical order has label $\sigma(i)$ in $\tilde t$. We observe that our construction generates $\tilde t$ if and only if all three of the following conditions are satisfied:
    \begin{enumerate}[(1)]
        \item The degree sequence $\bm D$ is chosen so that $D_i = d_{\sigma^{-1}(i)}$ for all $i$;
        \item The rooted labelled tree generated by the codeword matches $\tilde t$ without its ordering;
        \item The randomly chosen ordering on the labelled tree matches the ordering of $\tilde t$.
    \end{enumerate}
    Condition (1) occurs with probability
        \[ \Prob{D_1 = d_{\sigma^{-1}(1)}, \dots, D_n = d_{\sigma^{-1}(n)}} = \frac{\prod_{i=1}^np_{d_{\sigma^{-1}(i)}}}{\Prob{\sum_{i=1}^n\xi_i = n-1}} = \frac{\prod_{i=1}^n p_{d_i}}{\Prob{\sum_{i=1}^n \xi_i = n-1}}.\]
    Given that condition (1) occurs, the codeword is chosen uniformly from a set of size $\binom{n-1}{d_1 \dots d_n}$, and exactly one of these codewords produces the correct labelled tree, so we meet condition (2) with probability $\binom{n-1}{d_1 \dots d_n}^{-1}$. Given that conditions (1) and (2) are met, the ordering matches iff, for each $i$, the permutation of children of the vertex with label $i$ matches the ordering in $\tilde t$. As the vertex with label $i$ has $d_{\sigma^{-1}(i)}$ children, the probability that all sets of children are correctly ordered is
        \[ \frac{1}{\prod_{i=1}^n d_{\sigma^{-1}(i)}!} = \frac{1}{\prod_{i=1}^n d_i!}.\]
    Putting everything together, the probability that we generate $\tilde t$ is
        \[ \frac{\prod_{i=1}^np_{d_i}}{\Prob{\sum_{i=1}^n \xi_i = n-1}} \times \binom{n-1}{d_1 \dots d_n}^{-1} \times \frac{1}{\prod_{i=1}^n d_i!} = \frac{\prod_{i=1}^n p_{d_i}}{(n-1)!\Prob{\sum_{i=1}^n \xi_i = n-1}},\]
    and this quantity is the same for all $\tilde t$ which produce $t$ upon forgetting the ordering. Thus the probability of generating $t$ is $n!$ times this, which is
        \[ \frac{\prod_{i=1}^n p_{d_i}}{\frac{1}{n}\Prob{\sum_{i=1}^n \xi_i = n-1}}.\]
    By the cycle lemma, the denominator is equal to the probability that a random walk with steps $\xi - 1$ reaches $-1$ for the first time at time $n$, which in turn is the probability that a Bienaymé tree with offspring distribution $\xi$ has size $n$. The numerator is the probability of $t$ arising from an unconditioned Bienaymé tree, completing the proof.
\end{proof}

Let us return to our picture of growing trees one vertex at a time: instead of revealing the entire degree sequence in the beginning, we can consider only revealing degrees of vertices the first time we encounter them. To this end, it is useful to reorder $\bm D$ based on first appearances of the corresponding labels in the codeword (with all the zero entries appearing at the end, as there are no leaves in the codeword). Formally, letting $M \leq n - 1$ be the number of distinct entries in the codeword, we define indices $I_1, I_2, \dots, I_M$ inductively by $I_1 = 1$ and
    \[ I_{k+1} = \min\{ i : w_i \not \in \{ w_{I_1}, w_{I_2}, \dots, w_{I_k}\}\}.\]
Then we define an injection $\Sigma : [M] \to [n]$ by $\Sigma(k) = w_{I_k}$ and set
    \[ \hat{\bm D} = (\hat D_1, \hat D_2, \dots, \hat D_n) = (D_{\Sigma(1)}, \dots, D_{\Sigma(M)}, 0, \dots, 0).\]
Note that $\bm D$ has $M$ nonzero entries which are precisely the first $M$ entries of $\hat{\bm D}$. 

We observe the following property of the reordering $\hat{\bm D}$, which we will not prove as it is straightforward:

\begin{proposition_restate}\label{prop: Dhatissizebiased}
    $\hat{\bm D}$ is a size-biased random reordering of $\bm D$: that is, conditionally given $\bm D$, we have
    \[ \Prob{\Sigma(1) = \sigma(1), \dots, \Sigma(k) = \sigma(k) \given\Big \bm D} = \prod_{i = 1}^k \frac{D_{\sigma(i)}}{n-1-\sum_{j=1}^{i-1}D_{\sigma(j)}}\]
for any $k \leq M$.
\end{proposition_restate}

Propositions \ref{prop: pruferisbienayme} and \ref{prop: Dhatissizebiased} show that, provided we have a way to sample $\hat{\bm D}$ directly, we can run the following algorithm to generate an unlabelled, unordered conditioned Bienaymé tree. (We note that this procedure is also used in \cite{snakes} to study Bienaymé trees in the case that $\xi$ has finite variance.) Once again we keep track of an active vertex, which we will call $V_k$. We will also draw half-edges connected to some of our vertices. Let $H_k$ be the set of half-edges after the $k$th step. Most of the interesting behaviour of our algorithm is captured by $V_k$ and $H_k$, but for technical reasons we must also consider a set of `dormant' vertices, denoted $Z_k$. These correspond to vertices created at the ends of branches, whose labels do not (immediately) show up in the codeword. We will arrange that no vertex is simultaneously active and dormant.

We begin at time $0$ with the root as our only vertex, which is marked as active, and with no half-edges or dormant vertices. That is, $V_0 = \rho$, $H_0 = \varnothing$ and $Z_0 = \varnothing$. For $k = 1, 2, \dots$ the $k$th step consists of the following:

\begin{itemize}
    \item Sample a random element $X_k$ from $H_{k-1} \cup Z_{k-1} \cup \{ V_{k-1}\}$, according to the following rules:
    \begin{align*}
        \Prob{X_k = e} &= \frac{1}{n-k} \quad \text{ for all } e \in H_{k-1},\\
        \Prob{X_k = u} &= \frac{n-k-\#H_{k-1}}{(n-k)(n-k+\#Z_{k-1})} \quad \text{ for all } u \in Z_{k-1},\\
        \Prob{X_k = V_{k-1}} &= \frac{n-k-\#H_{k-1}}{n-k+\#Z_{k-1}}.
    \end{align*}
    (It is easy to see that these quantities sum to 1, so for $X_k$ to have a well-defined law we only need to check that each quantity is non-negative. For this it is sufficient to show $\#H_{k-1} \leq n-k$, a fact we will show later.) 
    \item (Branching event) If $X_k = e \in H_{k-1}$ is a half-edge, then we complete this half-edge by creating a new vertex $v$ and connecting it to $e$. The previously active vertex $V_{k-1}$ becomes dormant, and the new vertex becomes the active vertex, i.e. $V_k = v$;
    \item (Growth event) If $X_k = V_{k-1}$, we sample the next entry of $\hat{\bm D}$, say $\hat D_i$. We will show that this quantity is always at least 1. Assuming this for now, we create a new vertex $v$, connect it to $V_{k-1}$ by a (full) edge, then add $\hat D_i - 1$ half-edges to $V_{k-1}$. The dormant set is unchanged from step $k-1$;
    \item (Activation event) If $X_k = u \in Z_{k-1}$ is a dormant vertex, we sample the next entry of $\hat{\bm D}$, say $\hat D_i$. Once again, we assume for now that $\hat D_i \geq 1$. We create a new vertex $v$, connect it to $u$ by a (full) edge, then add $\hat D_i - 1$ half-edges to $u$. The previously active vertex $V_{k-1}$ becomes dormant, $u$ ceases to be dormant and the new vertex $v$ becomes the active vertex.
\end{itemize}

To motivate the choices for the law of $X_k$, observe that the process $\#Z_k$ increases by one in every branching event and stays constant in growth and activation events. Thus $\#Z_{k-1}$ is equal to the number of repeats among the first $k-1$ entries of the codeword, and so $n - k + \#Z_{k-1}$ is the number of labels that have not been used in the codeword yet. Conditionally given that a branching event does not occur (and the probability of this is $\frac{n-k-\#H_{k-1}}{n-k}$), any given dormant vertex becomes active iff the entry in the underlying codeword matches that of the dormant vertex. This codeword entry is conditionally uniform on the set of unused labels in the codeword so far, and so the conditional probability is $\frac{1}{n-k+\#Z_{k-1}}$.

The following calculation shows both that $X_k$ has a valid law and that the sampled $\hat D_i$ in any growth event is at least 1:  among the first $k-1$ steps of the algorithm, $\#Z_{k-1}$ of them are branching events, and the other $k - 1 - \#Z_{k-1}$ are growth or activation events. Each branching event removes a half-edge, while each growth or activation event samples the next entry $\hat D_i$ of $\hat{\bm D}$ and adds $\hat D_i - 1$ half-edges. It follows that $\#H_{k-1} = \sum_{j=1}^{k-1-\#Z_{k-1}}(\hat D_j - 1) - \#Z_{k-1}$. In particular,
    \[ n - k - \#H_{k-1} = n - 1 - \sum_{j=1}^{k-1-\#Z_{k-1}}\hat D_j \geq 0,\]
and since $k-1-\#Z_{k-1}$ is the number of entries of $\hat{\bm D}$ revealed so far, equality holds iff we have already revealed all nonzero entries of $\hat{\bm D}$. Thus, conditional on the first $k-1$ steps, a growth event at step $k$ has positive probability iff there are still positive entries in $\hat{\bm D}$ to reveal, and an activation event at step $k$ has positive probability iff the same condition holds and there exists at least one dormant vertex.

Note that $\#Z_k \leq k$, and so if $k \ll n$ the probability of activation events is negligible. In fact, for $m \ll \sqrt{n}$ we see that with high probability no activation events occur during the first $m$ steps of the process. For this reason, it will turn out that we can usually work on the event that no activation events occur on the time intervals we are interested in. 

\subsection{Relation to size-biased distributions}
In light of the previous section, we seek to understand the law of the size-biased random reordering $\hat{\bm D}$. We will show that this vector is related by a measure change to an i.i.d.\ sequence of random variables $(\xi_i^*)$ with the size-biased distribution: that is, $\xi_1^*, \xi_2^*, \dots, \xi_n^*$ are i.i.d.\ with law
    \[ \Prob{\xi_1^* = k} = \frac{k\Prob{\xi = k}}{\E{\xi}} = k\Prob{\xi = k}.\]
We will mostly be interested in the first $m$ entries of these vectors, where $m = o(n)$ as $n \to \infty$. We can only relate the laws on the event $\{ N^n \geq m\}$, where $N^n$ is the number of non-zero entries of $\hat{\bm D}$, but in our case this event will turn out to have probability tending to 1. In what follows, write $\Xi_n$ to denote a sum of $n$ i.i.d.\ copies of $\xi$ (note that this is a sum of non-size-biased offspring distributions).

\begin{proposition_restate}\label{prop: measurechangediscrete}
    For $1 \leq m < n$ and any bounded test function $f : \mathbb{Z}^m \to \R$, we have
        \[ \E{f(\hat D_1, \dots, \hat D_m)\IndEvent{N^n \geq m}} = \E{f(\xi_1^*, \dots, \xi_m^*)\Theta_m^n(\xi_1^*, \dots, \xi_m^*)}\]
    where $\Theta_m^n$ is the function given by
        \[ \Theta_m^n(k_1, \dots, k_m) = \frac{\Prob{\Xi_{n-m} = n - 1 - \sum_{i=1}^m k_i}}{\Prob{\Xi_n = n-1}}\prod_{i=1}^m\left(\frac{n-i+1}{n-1-\sum_{j=1}^{i-1}k_j}\right).\]
\end{proposition_restate}
\begin{proof}
    By linearity it is enough to consider the case 
    \[
    f(x_1, \dots, x_m) = \IndEvent{x_1 = r_1, \dots, x_m = r_m},
    \]
    where $r_1, \dots, r_m$ are all at least 1 and sum to at most $n-1$. For such a choice of $f$ we have
    \[ \E{f(\hat D_1, \dots, \hat D_m)\IndEvent{N^n \geq m}} = \Prob{\hat D_1 = r_1, \dots, \hat D_m = r_m}\]
    and
    \[ \E{f(\xi_1^*, \dots, \xi_m^*)\Theta_m^n(\xi_1^*, \dots, \xi_m^*)} = \Theta_m^n(r_1, \dots, r_m)\Prob{\xi_1^* = r_1, \dots, \xi_m^* = r_m}.\]
    The event $A = \{ \hat D_1 = r_1, \dots, \hat D_m = r_m \}$ occurs iff there is an $m$-tuple $\bm i = (i_1, \dots, i_m)$ of distinct indices in $[n]$ such that
        \[ D_{i_k} = r_k \text{ for } 1 \leq k \leq m \quad \text{ and } \quad \Sigma(k) = i_k \text{ for } 1 \leq k \leq m.\]
   Let us call this event $A_{\bm i}$. Note that the events $(A_{\bm i})$ are disjoint, and so $\Prob{A} = \sum_{\bm i}\Prob{A_{\bm i}}$. The number of valid index tuples is $\frac{n!}{(n-m)!}$, and by exchangeability the corresponding events all have the same probability. Thus we can consider a fixed index tuple $\bm{i}^* = (1, 2, \dots, m)$, and now $\Prob{A} = \frac{n!}{(n-m)!}\Prob{A_{\bm{i}^*}}$. For brevity, we will write $A^* = A_{\bm{i}^*}$.

   For the event $A^*$ to occur, we first require that $D_1 = r_1, \dots, D_m = r_m$ and the remaining $n-m$ entries sum to $n - 1 - \sum_{j=1}^m r_j$. This occurs with probability
    \[\frac{\Prob{\Xi_{n-m} = n - 1 - \sum_{i=1}^m r_i}}{\Prob{\Xi_n = n-1}} \prod_{i=1}^m p_{r_i}.\]
   Conditionally given this, the probability that $\Sigma(1) = 1, \dots, \Sigma(m) = m$ is $\prod_{i=1}^m \frac{r_i}{n - 1 - \sum_{j=1}^{i-1}r_j}$ by \autoref{prop: Dhatissizebiased}. It follows that

\begin{align*}
    \Prob{A} &= \frac{n!}{(n-m)!} \cdot \frac{\Prob{\Xi_{n-m} = n-1-\sum_{i=1}^m r_i}}{\Prob{\Xi_n = n-1}} \cdot \prod_{i=1}^m p_{r_i} \cdot \prod_{i=1}^m\frac{r_i}{n-1-\sum_{j=1}^{i-1}r_j}\\
    &= \prod_{i=1}^m (n-i+1) \cdot \frac{\Prob{\Xi_{n-m} = n-1-\sum_{i=1}^m r_i}}{\Prob{\Xi_n = n-1}} \cdot \prod_{i=1}^m (r_ip_{r_i}) \prod_{i=1}^m\left(\frac{1}{n-1-\sum_{j=1}^{i-1}r_j}\right)\\
    &= \frac{\Prob{\Xi_{n-m} = n-1-\sum_{i=1}^m r_i}}{\Prob{\Xi_n = n-1}} \prod_{i=1}^m\left(\frac{n-i+1}{n-1-\sum_{j=1}^{i-1}r_j}\right) \prod_{i=1}^m (r_ip_{r_i})\\
    &= \Theta_m^n(r_1, \dots, r_m) \Prob{\xi_1^* = r_1, \dots, \xi_m^* = r_m},\end{align*}
completing the proof.
\end{proof}

This result gives us a general strategy to study expectations and probabilities associated to our discrete line-breaking construction: first, we use the tower law to reduce the problem to studying a conditional expectation given $\hat{\bm D}$. Then we observe that this conditional expectation is some (deterministic) function of $\hat{\bm D}$. Finally, using the measure change, we replace $\hat{\bm D}$ with an i.i.d.\ sequence of size-biased distributions and insert a factor of $\Theta_m^n$ into the expectation.

As an example, consider the length $Y_1^n$ of the first stick in the construction. (We define the length of a stick to mean the number of edges it contains). We note that $Y_1^n = C_1 - 1$, so in particular the first stick has length at least $k$ iff the first $k$ steps in the algorithm are all growth events. Conditionally given $\hat{\bm D}$, we see
\begin{itemize}
    \item The first step is a growth event with probability 1;
    \item Given that the first $j - 1$ steps are all growth events, we have $\# H_{j-1} = \sum_{i=1}^{j-1}(\hat D_i - 1)$, and so the $j$th step is also a growth event with probability $1 - \frac{\sum_{i=1}^{j-1}(\hat D_i - 1)}{n-j}$.
\end{itemize}
It follows that
    \[ \Prob{Y_1^n \geq k \given\Big \hat{\bm D}} = \prod_{j=1}^k \left(1-\frac{\sum_{i=1}^{j-1}(\hat D_i - 1)}{n-j}\right)\]
Taking any $m \geq k$, and setting $f(x_1, x_2, \dots, x_m) = \prod_{j=1}^k\left(1-\frac{\sum_{i=1}^{j-1}(x_i-1)}{n-j}\right)$, the measure change result applied to $f$ implies that
    \[ \Prob{ Y_1^n \geq k, N^n \geq m } = \E{\prod_{j=1}^k\left(1-\frac{\sum_{i=1}^{j-1}(\xi_i^*-1)}{n-j}\right)\Theta_m^n(\xi_1^*, \dots, \xi_m^*)}.\]

\subsection{Heuristics for scaling limits in the $\alpha$-stable case}


In the next section, we will give a full proof of \autoref{thm:scalinglimit}. We will sketch a proof here in the more restrictive setting where $\Prob{\xi = k} \sim Ck^{-\alpha-1}$ for some constant $C > 0$. Note that then $\xi^*$ is such that $\Prob{\xi^* = k} \sim Ck^{-\alpha}$. Following the strategy outlined in the previous section, we seek limits in distribution for various functions involving $\xi^*$. Let us begin with the measure change $\Theta_m^n = \Theta_m^n(\xi_1^*, \dots, \xi_m^*)$. First we should determine how $m$ should grow with $n$. We can study the probability 
    \[\Prob{\Xi_{n-m} = n - 1 - \sum_{i=1}^m \xi_i^* \given\Big \xi^*}\]
to get an idea: assuming $m \ll n$ we know that $\frac{\Xi_{n-m} - n + m}{n^{1/\alpha}}$ converges in distribution to a spectrally positive $\alpha$-stable random variable $L_1$, say with density $p$. (Note that this may not be quite the same $L_1$ defined in \autoref{sec: intro} --- there may be a constant scaling involved. We will deal with this more precisely later). We may write
    \[ \Prob{\Xi_{n-m} = n - 1 - \sum_{i=1}^m \xi_i^* \,\Bigg|\, \xi^*} = \Prob{\frac{\Xi_{n-m}-n+m}{n^{1/\alpha}} = -\frac{1}{n^{1/\alpha}}\sum_{i=1}^m (\xi_i^* - 1) - \frac{1}{n^{1/\alpha}} \,\Bigg|\, \xi^*} \]
and now, by the local limit theorem for $\Xi$, if $\frac{1}{n^{1/\alpha}}\sum_{i=1}^{m}(\xi_i^* - 1)$ converges in distribution to some random variable $Z$, then the above probability is asymptotic to $n^{-1/\alpha}p(-Z)$. Meanwhile the probability $\Prob{\Xi_n = n-1}$ is asymptotic to $n^{-1/\alpha}p(0)$, and so the ratio converges to $p(-Z)/p(0)$.

We know that $k^{-\frac{1}{\alpha-1}}\sum_{i=1}^{\lfloor tk \rfloor} (\xi_i^* - 1)$ converges in distribution as $k \to \infty$ to $\sigma_t$, an $(\alpha-1)$-stable subordinator (again possibly scaled by a constant). Thus we should choose $m = \lfloor tk \rfloor$, where $k = k(n)$ is such that $k^{\frac{1}{\alpha-1}} = n^{1/\alpha}$. That is, $k = n^{1-\frac 1\alpha}$, and $m = \lfloor tn^{1-\frac 1\alpha}\rfloor$. Varying $t$ will allow us to consider growing trees over time.

With this choice of $m$, let us estimate the product in $\Theta_m^n$ as
\begin{align*}
    \prod_{i=1}^{\lfloor tn^{1-\frac1\alpha}\rfloor}\left(\frac{n-i+1}{n-1-\sum_{j=1}^{i-1}\xi_j^*}\right) 
    &= \prod_{i=1}^{\lfloor tn^{1-\frac1\alpha}\rfloor}\left(1-\frac{\sum_{j=1}^{i-1}(\xi_j^*-1) + 1}{n-i+1}\right)^{-1}\\
    &= \exp\left(-\sum_{i=1}^{\lfloor tn^{1-\frac1\alpha}\rfloor}\log\left(1-\frac{\sum_{j=1}^{i-1}(\xi_j^*-1)+1}{n-i+1}\right)\right)\\
    &\approx \exp\left(\frac 1n\sum_{i=1}^{\lfloor tn^{1-\frac1\alpha}\rfloor}\sum_{j=1}^{i-1}(\xi_j^*-1)\right)\\
    &\approx \exp\left(\frac{1}{n^{1-\frac1\alpha}}\sum_{i=1}^{\lfloor tn^{1-\frac1\alpha}\rfloor}\left(\frac{1}{n^{\frac1\alpha}}\sum_{j=1}^{i-1}(\xi_j^*-1)\right)\right)\\
    &\approx \exp\left(\int_0^t \sigma_s\,ds\right).
\end{align*}
Putting everything together, we expect that
    \[ \Theta_{\lfloor tn^{1-\frac1\alpha}\rfloor}^n \xrightarrow{d} \exp\left(\int_0^t \sigma_s\,ds\right)\frac{p(-\sigma_t)}{p(0)}.\]
We will prove a slightly stronger statement in the next section. As the process
    \[ t \mapsto \frac{1}{n^{1/\alpha}}\sum_{j=1}^{\lfloor tn^{1-\frac 1\alpha}\rfloor}(\xi_i^* - 1)\]
converges in distribution to $\sigma_t$, we then expect the process given by
    \[ t \mapsto \frac{1}{n^{1/\alpha}}\sum_{j=1}^{\lfloor tn^{1-\frac1\alpha}\rfloor}(\hat{D_i}-1)\]
to converge to the measure-changed version $\widetilde \sigma$ of $\sigma$. 

Assuming that branching events are rare in the discrete process, the partial sums of $\hat{\bm D}$ approximately give the number of free half-edges attached to the subtree present at each step of the construction. Near the beginning of the process, the probability of a branching event is roughly $1/n$ times this number of free half-edges, and so $\hat{\bm D}$ is playing an analogous role to the intensity function in a line-breaking construction. Thus, we expect $\widetilde \sigma$ to act as the intensity for a line-breaking construction that yields the $\alpha$-stable tree. Indeed, looking back to our first stick length example, if we take $k = \lfloor sn^{1-\frac1\alpha}\rfloor$ for some $s \leq t$, then we see
    \begin{multline*} \Prob{Y_1^n \geq \lfloor sn^{1-\frac1\alpha}\rfloor, N^n \geq \lfloor tn^{1-\frac1\alpha}\rfloor} \\
    = \E{\prod_{j=1}^{\lfloor sn^{1-\frac1\alpha}\rfloor}\left(1-\frac{\sum_{i=1}^{j-1}(\xi_i^*-1)}{n-j}\right)\Theta_{\lfloor tn^{1-\frac1\alpha}\rfloor}^n\left(\xi_1^*, \dots, \xi_{\lfloor tn^{1-\frac1\alpha}\rfloor}^*\right)}\end{multline*}
and by similar calculations to the ones used for $\Theta_m^n$, the product term should converge to $\exp(-\int_0^s \sigma_r\,dr)$. As the $\Theta$ term converges to the measure change, the limit of this expectation should be $\E{\exp(-\int_0^s \widetilde \sigma_r\,dr)}$, which is exactly the probability that the first stick has length $\geq s$ in the line-breaking construction driven by $\widetilde \sigma$.

\section{Convergence of discrete trees: a new proof of \autoref{thm:scalinglimit}} \label{sec: cvgdisc}

Our goal in this section is to show that the discrete trees introduced in \autoref{sec: discrete} converge on rescaling to the continuum tree $(\Tcal,d,\mu)$ introduced in \autoref{sec: intro}, thus giving a proof of \autoref{thm:scalinglimit}.

Recall that we assume the offspring distribution $\xi$ is in the domain of attraction of a stable law of index $\alpha$. As in the previous section, we let $\Xi_n = \sum_{i=1}^n \xi_i$, where $\xi_i$ are i.i.d.\ copies of $\xi$. By Theorem 1 of Kortchemski~\cite{igorduquesneproof}, there exists an increasing sequence $a_n > 0$, regularly varying of index $1/\alpha$, such that $\frac{\Xi_n - n}{a_n}$ converges in distribution to $L_1$, where $L$ is the spectrally positive $\alpha$-stable Lévy process, and moreover we have the local limit theorem
    \[ \lim_{n \to \infty} \sup_{k \in \mathbb Z}\left|a_n\Prob{\Xi_n - n = k} - p\left(\frac{k}{a_n}\right)\right| = 0, \]
where $p$ is the density of $L_1$. We can also obtain a scaling limit for the random walk with jumps following the size-biased distribution $\xi^*$. In what follows, let $m_n = n/a_n$.

\begin{proposition_restate}\label{prop: sizebiasedscalinglimit}
    For any $t > 0$, we have the convergence in distribution
        \[ \frac{1}{a_n}\sum_{i=1}^{\fl{tm_n}}\xi_i^* \overset d\to \sigma_t,\]
    where $\sigma$ is an $(\alpha-1)$-stable subordinator with Lévy measure $C_\alpha x^{-\alpha}\,dx$, $x > 0$. (Note that $C_\alpha = \frac{\alpha(\alpha-1)}{\Gamma(2-\alpha)}$ is the same constant appearing in the Lévy measure of $L$. In particular, we have $\E{e^{-\lambda \sigma_t}} = \exp(-t\alpha \lambda^{\alpha - 1}).$)
\end{proposition_restate}
\begin{proof}
    As $\xi$ is in the domain of attraction of an $\alpha$-stable law, there exists a slowly varying function $l(t)$ such that $\Prob{\xi \geq t} = l(t)t^{-\alpha}$ for all $t > 0$. By the proof used in Theorem 3.8.2 in \cite{durrettprobth}, we also know $n\Prob{\xi \geq xa_n} \to \int_x^\infty C_\alpha t^{-\alpha-1}\,dt = \frac{C_\alpha}{\alpha}x^{-\alpha}$ for any fixed $x$. Combining these results, we deduce that
        \[ l(xa_n) \sim \frac{C_\alpha a_n^\alpha}{\alpha n}\]
    as $n \to \infty$ with $x$ fixed. Now we may calculate
        \[ \Prob{\xi^* \geq t} = t\Prob{\xi \geq t} + \int_t^\infty \Prob{\xi \geq x}\,dx \]
    and so by Karamata's integral theorem (Proposition 1.5.10 of \cite{RegularVariation}) we have $\Prob{\xi^* \geq t} \sim \frac{\alpha}{\alpha - 1}l(t)t^{-\alpha + 1}$. In particular, for fixed $x$ we have
        \[ \Prob{\xi^* \geq xa_n} \sim \frac{\alpha}{\alpha - 1}l(xa_n)(xa_n)^{-\alpha+1} \sim \frac{1}{m_n} \frac{C_\alpha}{\alpha-1}x^{-\alpha+1} = \frac{1}{m_n} \int_x^\infty C_\alpha t^{-\alpha}\,dt\]
    and so, following the proof in Theorem 3.8.2 in \cite{durrettprobth}, we deduce the desired convergence in distribution.
\end{proof}

We can extend this convergence result to a statement about convergence of c\`adl\`ag processes: we may define, for each $n$, a c\`adl\`ag process $\sigma^n(t)$ such that
    \[ \sigma^n(t) \overset{d}{=} \frac 1{a_n}\sum_{i=1}^{\lfloor tm_n\rfloor}(\xi_i^*-1), \quad 0 \leq t \leq T\]
and now $\sigma^n \overset d\to \sigma$ as elements of the Skorokhod space $D([0, T])$. (We could equally have defined $\sigma^n$ as a sum of $\xi_i^*$, without subtracting 1 from each, and obtained the same limit, but it is convenient to include the subtraction as it simplifies calculations in the rest of this section.)  By the Skorokhod representation theorem we may construct $\sigma^n$ (for all $n$) and $\sigma$ on a common probability space such that $\sigma^n \to \sigma$ a.s.\ (with convergence holding in the Skorokhod J1-metric). For the rest of this section we will work in this setting. It will be useful to name the relevant copies $\xi_i^{n, *}$ of the size-biased distribution, so that
    \[ \sigma^n(t) = \frac{1}{a_n}\sum_{i=1}^{\fl{tm_n}}(\xi_i^{n,*}-1).\]

The following lemma on $N^n$, the number of vertices of out-degree at least 1, will be helpful when applying the measure change later:

\begin{lemma_restate}\label{lem: badeventrare}
    For any $T > 0$ we have $\Prob{N^n < \lfloor Tm_n\rfloor} \to 0$.
\end{lemma_restate}
\begin{proof}
    If $\xi_1, \dots, \xi_n$ are i.i.d.\ copies of the (non-size-biased) offspring distribution, then $N_n$ has the law of $\#\{i \leq n : \xi_i \geq 1\}$ conditional on $\sum_{i=1}^n \xi_i = n-1$. Thus we can crudely estimate
    \[ \Prob{N^n < \lfloor Tm_n\rfloor} \leq \frac{\Prob{\#\{i \leq n : \xi_i \geq 1\} < \lfloor Tm_n\rfloor}}{\Prob{\sum_{i=1}^n \xi_i = n - 1}}.\]
    By construction $m_n \ll n$ so the numerator decays exponentially in $n$ by e.g.\ Chernoff bounds for binomial distributions, while the denominator only decays polynomially. Thus we have a bound tending to 0.
\end{proof}

\subsection{Convergence of the measure change}
The key result in this section follows.

\begin{theorem}\label{thm: measurechangeconv}
    Fix a finite time horizon $T > 0$. Set $M^n_t = \Theta_{\fl{tm_n}}^n(\xi_1^*, \dots, \xi_{\fl{tm_n}}^*)$ for $0 \leq t \leq T$, and let $M_t = \exp(\int_0^t \sigma_s\,ds)p(-\sigma_t)/p(0)$. Then $M^n \overset d\to M$ in the Skorokhod space $D([0, T])$.
\end{theorem}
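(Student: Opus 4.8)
The plan is to work throughout on the probability space provided by the Skorokhod representation theorem, on which $\sigma^n \to \sigma$ almost surely in the $J_1$ topology on $D([0,T])$, and to prove the stronger statement that $M^n \to M$ almost surely in $D([0,T])$; since $M^n$ and $M$ carry their correct laws in this setting, convergence in distribution follows at once. Everything rests on the multiplicative decomposition $M^n_t = A^n_t B^n_t$, where
\[
A^n_t = \frac{\Prob{\Xi_{n-\fl{tm_n}} = n - 1 - \sum_{i=1}^{\fl{tm_n}}\xi_i^{n,*}}}{\Prob{\Xi_n = n-1}},
\qquad
B^n_t = \prod_{i=1}^{\fl{tm_n}}\frac{n-i+1}{n-1-\sum_{j=1}^{i-1}\xi_j^{n,*}}.
\]
I would show that $B^n \to B := \exp(\int_0^\cdot \sigma_s\,ds)$ uniformly on $[0,T]$ and $A^n \to A := p(-\sigma_\cdot)/p(0)$ in $J_1$, both almost surely. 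Since $B$ is continuous, the standard fact that multiplication $D([0,T])^2 \to D([0,T])$ is continuous at a pair $(x,y)$ whenever one factor converges uniformly to a continuous function and the other in $J_1$ then yields $M^n = A^n B^n \to AB = M$ in $J_1$. Throughout one uses the identity $a_n/n = 1/m_n$, the regular variation of $(a_n)$ of index $1/\alpha$ (whence $a_{n-\fl{tm_n}}/a_n \to 1$ uniformly in $t \in [0,T]$ by Potter bounds, since $\fl{tm_n}/n \le Tm_n/n \to 0$), the local limit theorem of Kortchemski, and the a.s.\ bound $\sup_n \sigma^n(T) < \infty$ (true because a $J_1$ time change fixes the endpoint $T$, so $\sigma^n(T) = \sigma^n(\lambda_n(T)) \to \sigma(T) < \infty$).

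For the product term, note that $\sum_{j=1}^{i-1}(\xi_j^{n,*}-1) = a_n\sigma^n((i-1)/m_n)$, so that each factor of $B^n_t$ equals $(1-x_{n,i})^{-1}$ with $x_{n,i} = (1 + a_n\sigma^n((i-1)/m_n))/(n-i+1)$, and for $i \le \fl{Tm_n}$ we have $x_{n,i} \le (1 + a_n\sigma^n(T))/(n - Tm_n) = O(1/m_n)$ uniformly, almost surely. Using $-\log(1-x) = x + O(x^2)$, the contribution of the quadratic terms is $\fl{Tm_n}\cdot O(1/m_n^2) = O(1/m_n)$; replacing $n-i+1$ by $n$ in $\sum_{i=1}^{\fl{tm_n}} x_{n,i}$ costs a further $O(1/a_n)$; and $\sum_{i=1}^{\fl{tm_n}} 1/(n-i+1) = O(m_n/n) = O(1/a_n)$. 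Hence
\[
\log B^n_t = \frac{a_n}{n}\sum_{i=1}^{\fl{tm_n}}\sigma^n\!\left(\frac{i-1}{m_n}\right) + o(1) = \frac{1}{m_n}\sum_{i=1}^{\fl{tm_n}}\sigma^n\!\left(\frac{i-1}{m_n}\right) + o(1)
\]
uniformly in $t \in [0,T]$. Since $\sigma^n$ is constant equal to $\sigma^n((i-1)/m_n)$ on $[(i-1)/m_n, i/m_n)$, the sum here is \emph{exactly} $\int_0^{\fl{tm_n}/m_n}\sigma^n_s\,ds$; as the $\sigma^n$ are increasing, uniformly bounded on $[0,T]$, and converge a.e.\ to $\sigma$, bounded convergence together with the uniform Lipschitz property of $t \mapsto \int_0^t \sigma^n_s\,ds$ gives $\int_0^{\fl{tm_n}/m_n}\sigma^n_s\,ds \to \int_0^t \sigma_s\,ds$ uniformly in $t$. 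Exponentiating, $B^n \to B$ uniformly, a.s.

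For the ratio $A^n$, write $S_t^n = \sum_{i=1}^{\fl{tm_n}}\xi_i^{n,*} = \fl{tm_n} + a_n\sigma^n(t) \in \mathbb{Z}$, so the numerator of $A^n_t$ is $\Prob{\Xi_{n-\fl{tm_n}} - (n - \fl{tm_n}) = -1 - a_n\sigma^n(t)}$. Since $n - \fl{tm_n} \to \infty$ uniformly in $t$, the local limit theorem (whose supremum bound becomes uniform once the number of summands exceeds a threshold) gives $a_{n - \fl{tm_n}} \cdot (\text{numerator}) = p\big((-1 - a_n\sigma^n(t))/a_{n-\fl{tm_n}}\big) + o(1)$ uniformly in $t$. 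Now $(-1 - a_n\sigma^n(t))/a_{n-\fl{tm_n}} = -\sigma^n(t) + o(1)$ uniformly in $t$, using $a_n/a_{n-\fl{tm_n}} \to 1$ uniformly and $\sup_n \sigma^n(T) < \infty$; since $p$ is bounded and uniformly continuous this yields $a_n \cdot (\text{numerator}) = p(-\sigma^n(t)) + o(1)$ uniformly. The denominator satisfies $a_n \Prob{\Xi_n = n - 1} \to p(0) > 0$ by the same local limit theorem. Therefore $A^n_t = p(-\sigma^n(t))/p(0) + o(1)$ uniformly in $t$, a.s. As $x \mapsto p(-x)/p(0)$ is continuous, composition preserves $J_1$ convergence, so $p(-\sigma^n_\cdot)/p(0) \to A$ in $J_1$; adding a uniformly vanishing correction preserves $J_1$ convergence, whence $A^n \to A$ in $J_1$ a.s.

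The main obstacle is the $A^n$ step: one must run the local limit theorem uniformly over the (slowly) varying number $n - \fl{tm_n}$ of summands, combine this with the uniform convergence $a_{n-\fl{tm_n}}/a_n \to 1$ and the a.s.\ uniform bound on $\sigma^n$ over $[0,T]$, and keep every error term uniform in $t$ so that after composing with $p$ one still has genuine $J_1$ convergence and not merely finite-dimensional convergence. By comparison the $B^n$ step is routine, the only care needed being the exact identification of the Riemann sum with an integral, which is clean precisely because $\sigma^n$ is piecewise constant on the relevant grid; and the concluding multiplication step relies only on the elementary lemma that mixed uniform/$J_1$ convergence is preserved under products when the uniformly convergent factor has a continuous limit.
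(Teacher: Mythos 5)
Your proposal is correct and follows essentially the same route as the paper's proof: pass to a Skorokhod representation with $\sigma^n \to \sigma$ a.s., split $M^n$ into the product term (handled via logarithms, Taylor expansion and identification of the Riemann sum with $\int_0^{\cdot}\sigma^n_s\,ds$) and the probability ratio (handled via the local limit theorem applied uniformly in the number of summands together with $a_{n-\fl{tm_n}}/a_n \to 1$ and continuity of $p$), then recombine. The only cosmetic difference is that you make the final multiplication step explicit via the mixed uniform/$J_1$ product lemma, which the paper leaves implicit.
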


In what follows, and in all subsequent proofs, we will abuse notation and write $\Prob{\Xi_n = Z}$ as a shorthand for the random variable $\Psi_n(Z)$, where $\Psi_n$ is the probability mass function of $\Xi_n$. We can also view this as the conditional probability $\Prob{\Xi_n = Z \given\Big Z}$ with $\Xi_n$ taken to be independent of $Z$.

\begin{proof}
    As discussed earlier, we work in the setting that $\sigma^n$ and $\sigma$ are defined on a common space with $\sigma^n \to \sigma$ a.s. Define 
        \[ M_t^n = \Theta_{\fl{tm_n}}^n\left(\xi_1^{n, *}, \dots, \xi_{\fl{tm_n}}^{n, *}\right);\]
    we will show that $M^n \to M$ a.s.\ in the Skorokhod metric.

    We begin by handling the product term: we may write
    \begin{align*}
        \prod_{i=1}^{\fl{tm_n}}\left(\frac{n-i+1}{n-1-\sum_{j=1}^{i-1}\xi_j^{n,*}}\right) &= \prod_{i=1}^{\fl{tm_n}}\left(1-\frac{\sum_{j=1}^{i-1}(\xi_j^{n,*}-1)+1}{n-i+1}\right)^{-1}\\
        &= \exp\left(-\sum_{i=1}^{\fl{tm_n}}\log\left(1-\frac{\sum_{j=1}^{i-1}(\xi_j^{n,*}-1)+1}{n-i+1}\right)\right)\\
        &= \exp\left(-\int_0^{\frac{\fl{tm_n}}{m_n}}m_n\log\left(1-\frac{\sum_{j=1}^{\fl{sm_n}}(\xi_j^{n,*}-1)+1}{n-\fl{sm_n}}\right)\,ds\right)\\
        &= \exp\left(-\int_0^{\frac{\fl{tm_n}}{m_n}}m_n\log\left(1-\frac{a_n\sigma^n(s)+1}{n-\fl{sm_n}}\right)\,ds\right).
    \end{align*}
    We would like this to converge to $\exp(\int_0^t \sigma_s\,ds)$ and so we will prove that the integrand converges to $-\sigma_s$ a.s.\ (again, in the Skorokhod metric). As $\sigma^n \to \sigma$ a.s.\ in the Skorokhod metric, it is enough to show that
        \[ \left(m_n\log\left(1-\frac{a_n\sigma^n(s)+1}{n-\fl{sm_n}}\right) + \sigma^n(s)\right)_{s \in [0, T]} \to 0 \quad \text{ a.s.\ in the Skorokhod metric,}\]
    which amounts to showing uniform convergence to 0. This is easily shown using a Taylor expansion and the observation that, for any $s$, $\sigma^n(s) \to \sigma_s$ a.s.\ and this quantity is at most $\sigma_T$. Now, as integration is a continuous operation on $D([0, T])$, we obtain
        \[ -\int_0^t m_n\log\left(1-\frac{a_n\sigma^n(s)+1}{n-\fl{sm_n}}\right)\,ds \to \int_0^t \sigma_s\,ds \]
    almost surely, again with respect to the metric on $D([0, T])$. The left-hand side is not quite of the correct form as the upper limit of integration is slightly wrong (we have $t$ instead of the correct $\fl{tm_n}/m_n$), but this is easily remedied by showing that the difference tends to 0 uniformly a.s. --- that is,
        \[ \sup_{t \in [0, T]}\left|\int_{\frac{\fl{tm_n}}{m_n}}^t m_n\log\left(1-\frac{a_n\sigma^n(s)+1}{n-\fl{sm_n}}\right)\,ds\right| \to 0 \quad \text{ a.s.}\]
    This is again a straightforward calculation, so we omit it. Now we finish by exponentiating (again a continuous operation on $D([0, T])$).

    Now we can handle the ratio of probabilities: writing $k = k(t) = \fl{tm_n}$ for brevity, we observe that
        \[ \frac{\Prob{\Xi_{n - k} = n - 1 - \sum_{i=1}^{k}\xi_i^*}}{\Prob{\Xi_n = n-1}} = \frac{a_n\Prob{\Xi_{n - k} - (n - k) = -a_n\sigma^n(t) - 1}}{a_n\Prob{\Xi_n - n = -1}}. \]
    By a straightforward application of the local limit theorem the denominator (which is non-random) converges to $p(0)$.

    We handle the numerator with another application of the local limit theorem: setting $S^n = \sup_{\ell \in \mathbb Z}\left|a_n\Prob{\Xi_n - n = \ell} - p(\ell/a_n)\right| \to 0$, we have
    \begin{multline*}
        \left|a_n\Prob{\Xi_{n -k} - (n - k) = -a_n\sigma^n(t) - 1} - \frac{a_n}{a_{n-k}}p\left(-\frac{a_n}{a_{n-k}}\sigma^n(t)-\frac{1}{a_{n-k}}\right)\right|\\
        \leq \frac{a_n}{a_{n-k}}S^{n-k} \quad \leq \sup_{t \in [0, T]} \left\{\frac{a_n}{a_{n-k(t)}}S^{n-k(t)}\right\},
    \end{multline*}
    and similarly
    \[
        \left|\frac{a_n}{a_{n-k}}p\left(-\frac{a_n}{a_{n-k}}\sigma^n(t)-\frac{1}{a_{n-k}}\right) - p\left(-\frac{a_n}{a_{n-k}}\sigma^n(t)-\frac{1}{a_{n-k}}\right)\right| \leq \sup_{t \in [0, T]}\left|\frac{a_n}{a_{n-k(t)}}-1\right| \|p\|_\infty.
    \]
    Hence the processes
        \[ \Bigg(a_n\Prob{\Xi_{n -k(t)} - (n - k(t)) = -a_n\sigma^n(t) - 1}\Bigg)_{t \in [0, T]} \]
    and
        \[ \left(p\left(-\frac{a_n}{a_{n-k(t)}}\sigma^n(t)-\frac{1}{a_{n-k(t)}}\right)\right)_{t \in [0, T]}\]
    are a.s.\ uniformly close to each other. Thus it suffices to show the latter converges a.s.\ to $p(-\sigma_t)$, which is clear by continuity of $p$. 
\end{proof}

Note that $\E{|M_t^n|} = \E{M_t^n} = \Prob{N^n \geq \lfloor Tn^{1-\frac1\alpha}\rfloor} \to 1 = \E{|M_{t}|}$, and so by Scheffé's lemma we have $\E{|M_t^n - M_t|} \to 0$ in the setting that everything is defined on the same space.

With this convergence established, we can study limits of conditional probabilities given $\hat{\bm D}$ by replacing the entries $\hat D_i$ with i.i.d.\ entries $\xi_i^*$. In an abuse of notation, we will write $\Prob{\cdot \given\Big \hat{\bm D} = \xi^*}$ to denote this alternative setting.

The processes $t \mapsto \frac{1}{a_n}\sum_{i=1}^{\fl{m_nt}}\hat D_i$ are obtained as measure changes of $\sigma^n$ by $M^n$. As we can construct a common space on which both $\sigma^n \to \sigma$ and $M^n \to M$ almost surely, we deduce that $\left(\frac{1}{a_n} \sum_{i=1}^{\fl{m_nt}}\hat D_i, t \leq T\right) \convdist \left(\widetilde \sigma_t, t \leq T\right)$. As the time horizon $T$ was arbitrary, we can extend this to a convergence result on $D([0, \infty))$:

\begin{corollary_restate} \label{cor: cumuldeg}
As $n \to \infty$,
\[
\left( \frac{1}{a_n} \sum_{i=1}^{\fl{m_n t}} \hat{D}_i, t \ge 0\right) \convdist (\widetilde{\sigma}_t, t \ge 0)
\]
for the space of c\`adl\`ag functions $D([0,\infty))$ endowed with the Skorokhod topology on $[0, \infty)$.
\end{corollary_restate}

\subsection{Convergence of random finite-dimensional distributions}
Recall that $C_1, C_2, \dots$ are the positions of repeated entries, i.e. times of branching events (but not activation events), in the codeword for the discrete construction. We will write $C_1^n, C_2^n, \dots$ for clarity. We also introduce the attachment points $J_1^n, J_2^n, \dots$ defined as follows: if the branching event at time $C_i^n$ selects a half-edge with vertex $v$, then $J_i^n$ is the first time that the label of $v$ appears in the codeword. We aim to show that, for any $k \geq 1$,
    \[ \frac{1}{a_n}\left(C_1^n, C_2^n, \dots, C_k^n, J_1^n, \dots, J_k^n\right) \overset{d}{\to} (Y_1, Y_2, \dots, Y_k, Z_1, \dots, Z_k)\]
where $Y_i$ are the jump times of a Poisson process of intensity $\widetilde \sigma$, and conditionally given $\widetilde \sigma$ and $(Y_1, \dots, Y_k)$, the random variables $(Z_1,\ldots,Z_k)$ are independent and $Z_i$ has c.d.f.\ $\frac{\widetilde \sigma_t}{\widetilde \sigma_{Y_i-}}, t \in [0, Y_i)$. Our strategy of proof is similar to that of Section 4 of \cite{snakes}. 

As alluded to before, it will be easier to work in the non-measure changed setting --- we will show the following analogous result first:

\begin{proposition_restate}\label{prop: fdmarginaldensity}
    Let $k \geq 0$ and let $s_1, \dots, s_k, t_1, \dots, t_k$ be fixed with $s_i \leq t_i \leq T$ for all $i$. Define processes $Q^{n,k}_t, Q^k_t$ for $t \in [0, T]$ by
        \[ Q_t^{n,k} = m_n\Prob{\frac{J_1^n}{m_n} \leq s_1, \dots, \frac{J_k^n}{m_n} \leq s_k, \frac{C_1^n}{m_n} \leq t_1, \dots, \frac{C_k^n}{m_n}\leq t_k, C_{k+1}^n = \fl{t m_n} \given\Big \hat{\bm D} = \xi^*},\]
    and
        \[ Q_t^k = \sigma_{t-} \exp\left(-\int_0^t \sigma_u\,du\right) \! \int_{\prod_{i=1}^k[0,t_i]} \! \! \!\IndEvent{u_1 < u_2 < \dots < u_k < t}\prod_{i=1}^k\left(\sigma_{u_i-}\wedge\sigma_{s_i}\right)du_1\dots du_k,\]
    with the convention that the product integral in $Q_t^0$ is equal to 1. Then $Q^{n,k} \overset d\to Q^k$ in $D([0, T])$.
\end{proposition_restate}
Before we prove this lemma, we give some motivation for choosing this statement to prove. If $\eta_1 < \eta_2 < \dots$ are the jump times of a Poisson process of intensity $\sigma$, then we know
    \[ \Prob{\eta_1 \in du_1, \dots, \eta_k \in du_k \,\big|\, \sigma} = \IndEvent{u_1 < \dots < u_k}\sigma_{u_1}\dots\sigma_{u_k}\exp\left(-\int_0^{u_k}\sigma_r\,dr\right)\,du_1\dots du_k.\]
Now adding in attachment points $\zeta_1, \zeta_2, \dots$, again such that conditionally given $\sigma$ and $(\eta_j)_{j \geq 1}$, the random variables $(\zeta_1, \zeta_2, \ldots)$ are independent and $\zeta_i$ has c.d.f.\ $\sigma_t /\sigma_{\eta_i-}, t \in [0, \eta_i)$, we obtain
    \begin{multline*}
        \Prob{\eta_1 \in du_1, \dots, \eta_k \in du_k, \zeta_1 \leq s_1, \dots, \zeta_{k-1} \leq s_{k-1} \,\big|\, \sigma} \\= \IndEvent{u_1 < \dots < u_k}\sigma_{u_k} \prod_{i=1}^{k-1}\left(\left(\sigma_{s_i} \wedge \sigma_{u_i-}\right)\frac{\sigma_{u_i}}{\sigma_{u_i-}}\right)\exp\left(-\int_0^{u_k}\sigma_r\,dr\right)\,du_1\dots du_k.
    \end{multline*}
Replacing $k$ by $k+1$, renaming $u_{k+1}$ to $t$ and integrating over $0 \leq u_i \leq t_i, 1 \leq i \leq k$ yields
    \begin{multline*}
        \Prob{\eta_1 \leq t_1, \dots, \eta_k \leq t_k, \eta_{k+1} \in dt, \zeta_1 \leq s_1, \dots, \zeta_k \leq s_k}\\= dt\,\sigma_t \exp\left(-\int_0^t\sigma_u\,du\right) \! \! \int_{\prod_{i=1}^k[0, t_i]}\! \! \IndEvent{u_1 < \dots < u_k < t}\prod_{i=1}^k\left(\left(\sigma_{s_i} \wedge \sigma_{u_i-}\right)\frac{\sigma_{u_i}}{\sigma_{u_i-}}\right) \!du_1\dots du_k
    \end{multline*}
which is almost in the form of $Q_t^k\,dt$, except for the factors of $\sigma_{u_i} / \sigma_{u_i-}$ in the integral. However, $\sigma$ has countably many discontinuities, and so $\prod_{i=1}^k \frac{\sigma_{u_i}}{\sigma_{u_i-}} = 1$ Lebesgue-almost everywhere on $\prod_{i=1}^k[0, t_i]$. Thus we have exactly
    \[ \Prob{\eta_1 \leq t_1, \dots, \eta_k \leq t_k, \eta_{k+1} \in [t, t+dt), \zeta_1 \leq s_1, \dots, \zeta_k \leq s_k} = Q_t^k\,dt.\]
We will need the following analytic lemma in our proof.
\begin{lemma_restate}\label{lem: j1products}
    Let $f_n, \tilde f_n, g_n, \tilde g_n$ be (deterministic) c\`adl\`ag functions $[0, T] \to \R$. Suppose that
    \begin{enumerate}[(a)]
        \item $\sup_{t \in [0, T]}|f_n(t) - \tilde f_n(t)| \to 0$ as $n \to \infty$;
        \item $\sup_{t \in [0, T]}|g_n(t) - \tilde g_n(t)| \to 0$ as $n \to \infty$;
        \item There exists a constant $C > 0$ such that $\sup_{t \in [0, T]}|\tilde f_n(t)| \leq C$ and $\sup_{t \in [0, T]}|\tilde g_n(t)| \leq C$ for all $n$ sufficiently large.
    \end{enumerate}
    Then
    \begin{enumerate}
        \item $\sup_{t \in [0, T]}\big|f_n(t)g_n(t) - \tilde f_n(t)\tilde g_n(t)\big| \to 0$ as $n \to \infty$;
        \item There exists a constant $C' > 0$ such that all of $\sup_{t \in [0, T]}|f_n(t)|$, $\sup_{t \in [0, T]}|g_n(t)|$ and $\sup_{t \in [0, T]}|\tilde f_n(t)\tilde g_n(t)|$ are at most $C'$.
    \end{enumerate}
\end{lemma_restate}

We do not prove \autoref{lem: j1products} as it is straightforward, but we will make use of it extensively to prove \autoref{prop: fdmarginaldensity}.

\begin{proof}[Proof of \autoref{prop: fdmarginaldensity}]
    We will use the Skorokhod representation theorem again and prove almost sure convergence in the Skorokhod metric. We introduce auxiliary processes $R_t^{n, k}$, defined analogously to $Q_t^k$ but with $\sigma$ replaced by $\sigma^n$. With this we know $R^{n, k} \to Q^k$ in the Skorokhod metric (using the same time change that gives $\sigma^n \to \sigma$), and so it is enough to show that as $n \to \infty$, the difference between $Q_t^{n, k}$ and $R_t^{n, k}$ tends to 0 uniformly in $t$. We seek to prove this stronger convergence result by induction on $k$.

    Dealing with activation events precisely requires handling a great number of cases, but we can exploit the fact that these events are rare in order to effectively ignore them. More precisely, define an auxiliary process $\widetilde Q_t^{n, k}$ by 
    \[ \widetilde Q_t^{n,k} = m_n\Prob{\!\frac{J_1^n}{m_n} \leq s_1, \dots, \frac{J_k^n}{m_n} \leq s_k, \frac{C_1^n}{m_n} \leq t_1, \dots, \frac{C_k^n}{m_n}\leq t_k, C_{k+1}^n = \fl{t m_n}, \mathcal E^{n, k} \! \given\Big \! \hat{\bm D} = \xi^*\!} \]
    where we define the event
    \[ \mathcal E^{n, k} = \{ \text{there are no activation events among the first } \min\{\fl{Tm_n}, C_{k+1}^n\} \text{ steps} \}.\]
    Note that $1 - \Prob{\mathcal E^{n, k}} \leq \frac{(k+1)Tm_n}{n - \fl{Tm_n}}$ for all $n$ and $k$, so for all $n$ and all $k$ we almost surely have the estimate
    \[ \sup_{t \in [0, T]} \left|\widetilde Q_t^{n, k} - Q_t^{n, k}\right| \leq O\left(\frac{m_n^2}{n}\right) = o(1).\]
    We deduce that, for any fixed $k$,
    \[
        \sup_{t \in [0, T]}\left|Q_t^{n, k} - R_t^{n, k}\right| \to 0 \text{ a.s.\ } \iff \sup_{t \in [0, T]}\left|\widetilde Q_t^{n, k} - R_t^{n, k}\right| \to 0 \text{ a.s.\ }
    \]
    First consider the case $k = 0$, so that
        \[ \widetilde Q_t^{n,0} = m_n\Prob{C_1^n = \fl{tm_n}, \mathcal E^{n, 0} \given\Big \hat{\bm D^n} = \xi^{n,*}}.\]
    The event $\{C_1^n = l\} \cap \mathcal E^{n, 0}$ occurs if and only if the first $l-1$ steps are all growth events and step $l$ is a branching event. We know that conditionally given that among the first $j-1$ steps there have been $b \geq 0$ branching events and no activation events, at step $j$ we:
    \begin{itemize}
        \item branch with probability $\frac{\sum_{i=1}^{j-1-b}(\xi_i^{n,*}-1) - b}{n - j}$;
        \item grow with probability $1 - \frac{\sum_{i=1}^{j-1-b}(\xi_i^{n,*}-1)}{n - j + b}.$
    \end{itemize}
    It follows that
        \[ \Prob{C_1^n = \fl{tm_n}, \mathcal E^{n, 0} \given\Big \hat{\bm D^n} = \xi^{n,*}} = \prod_{i=1}^{\fl{tm_n}-1}\left(1-\frac{\sum_{j=1}^{i-1}(\xi_j^{n,*}-1)}{n-i}\right) \times \frac{\sum_{j=1}^{\fl{tm_n}-1}(\xi_j^{n, *}-1)}{n-\fl{tm_n}}.\]
    By an argument very similar to the one in \autoref{thm: measurechangeconv}, the product is uniformly close to $\exp(-\int_0^t\sigma^n(s)\,ds)$. In addition, as $m_n/n = 1/a_n$ we have
        \[ m_n \times \frac{\sum_{j=1}^{\fl{tm_n}-1}(\xi_j^{n,*}-1)}{n-\fl{tm_n}} - \sigma^n(t-) \to 0\]
    uniformly. Both $\exp(-\int_0^t\sigma^n(s)\,ds)$ and $\sigma^n(t-)$ are uniformly bounded in $n$ (as they have c\`adl\`ag limits in the Skorokhod space), so we may apply \autoref{lem: j1products} to get uniform closeness of the products. This establishes almost sure uniform closeness between $\widetilde Q^{n, 0}$ and $R^0$, hence also uniform closeness between $Q^{n, 0}$ and $R^0$.

    Now assume $k \geq 1$ and the uniform closeness holds for the $(k-1)$-dimensional marginals. We observe that conditionally given $\hat{\bm D} = \xi^*$, all of $(C_1^n, \dots, C_k^n, J_1^n, \dots, J_{k-1}^n)$ and that $\mathcal E^{n, k-1}$ holds, the random variables $J_k^n$ and $C_{k+1}^n$ are independent. Letting $\mathcal F_k^n$ be the $\sigma$-algebra associated to this conditioning, the conditional c.d.f.\ of $J_k^n$ is
    \[ \Prob{J_k^n \leq r \given\Big \mathcal F_k^n} = \frac{\sum_{j=1}^{r-B(r)}(\xi^{n,*}_i-1)-B(r)}{\sum_{j=1}^{C^n_k-k}(\xi_i^{n, *}-1)-(k-1)}, \quad 0 \leq r \leq C^n_k-1\]
    where $B(r) = B_{k-1}^n(r) = \#\{1 \leq i \leq k-1 : J_i^n \leq r\}$ (note in particular that $B(C^n_k-1)=k-1$). We would like to say that this probability only depends on $\mathcal F_k^n$ through $C_k^n$, but this is not quite true due to the appearance of $B(r)$, a random function defined in terms of $J_1^n, \dots, J_{k-1}^n$. However, as $0 \leq B(r) \leq k-1$ for all $r$ in the relevant range, we can essentially ignore it: more precisely, we have estimates
    \[ \frac{\sum_{j=1}^{r-k+1}(\xi^{n,*}_j-1)-(k-1)}{\sum_{j=1}^{C^n_k-k}(\xi_j^{n, *}-1)-(k-1)} \leq \Prob{J_k^n \leq r \given\Big \mathcal F_k^n} \leq \frac{\sum_{j=1}^{r}(\xi^{n,*}_j-1)}{\sum_{j=1}^{C_k-k}(\xi_j^{n, *}-1)-(k-1)}\]
    which depend on $\mathcal F_k^n$ only through $C_k^n$. Thus for any nontrivial $\mathcal F_k^n$-measurable event $\mathcal A$ with, say, $\mathcal A \subset \{C^n_k = l\}$, we have the almost sure bounds
        \[ \frac{\sum_{j=1}^{r-k+1}(\xi^{n,*}_j-1)-(k-1)}{\sum_{j=1}^{l-k}(\xi_j^{n, *}-1)-(k-1)} \leq \Prob{J_k^n \leq r \given\Big \mathcal A} \leq \frac{\sum_{j=1}^{r}(\xi^{n,*}_j-1)}{\sum_{j=1}^{l-k}(\xi_j^{n, *}-1)-(k-1)}. \]
    Importantly, this is true only for $0 \leq r \leq l-1$; if $l \leq r$ then the probability is trivially equal to 1.
    
    Similarly, we can handle $C_{k+1}^n$ conditionally given $\mathcal F_k^n$. Conditionally given that $\mathcal E^{n, k-1}$ holds, we know that 
        \begin{multline*} C_{k+1}^n = r \text{ and } \mathcal E^{n, k} \text{ holds} \iff\\ \text{steps } C_k^n + 1, \dots, r-1 \text{ are all growth events and step } r \text{ is a branching event.}\end{multline*}
    This occurs with probability
        \[ \Prob{C_{k+1}^n = r, \mathcal E^{n, k} \given\Big \mathcal F_k^n} = \prod_{i=C_k^n + 1}^{r-1}\left(1-\frac{\sum_{j=1}^{i-1-k}(\xi_j^{n,*}-1)}{n-i+k}\right) \times \frac{\sum_{j=1}^{r-1-k}(\xi_j^{n,*}-1)-k}{n-r},\]
    which depends on $\mathcal F_k^n$ only through $C_k^n$. Thus for any $\mathcal F_k^n$-measurable event $\mathcal A \subset \{C_k^n = l\}$ with $l \leq r-1$ we have
    \[ \Prob{C_{k+1}^n = r, \mathcal E^{n, k} \given\Big \mathcal A} = \prod_{i=l+1}^{r-1}\left(1-\frac{\sum_{j=1}^{i-1-k}(\xi_j^{n,*}-1)}{n-i+k}\right) \times \frac{\sum_{j=1}^{r-1-k}(\xi_j^{n,*}-1)-k}{n-r}. \]
    We are now ready to prove the convergence of $\widetilde Q^{n,k}$: we will give almost sure upper and lower bounds for $\widetilde Q^{n, k}$ which are both uniformly close to $R^{n, k}$. In what follows, it will be helpful to abbreviate
        \[ \mathcal A_{k-1} = \left\{\frac{J_1^n}{m_n} \leq s_1, \dots, \frac{J_{k-1}^n}{m_n} \leq s_{k-1}, \frac{C_1^n}{m_n} \leq t_1, \dots, \frac{C_{k-1}^n}{m_n} \leq t_{k-1}\right\},\]
    so that for example $\widetilde Q_t^{n, k-1} = m_n\Prob{\mathcal A_{k-1}, C^n_{k} = \fl{tm_n}, \mathcal E^{n, k-1} \given{} \hat{\bm D}=\xi^*}$. To prove the upper bound, note
    \begin{align*}
        \widetilde Q_t^{n,k} &= m_n\Prob{\mathcal A_{k-1}, J_k^n \leq \fl{s_k m_n}, C^n_k \leq \fl{t_k m_n}, C^n_{k+1} = \fl{tm_n}, \mathcal E^{n, k} \given\Big \hat{\bm D} = \xi^*}\\
            &= m_n\sum_{l=0}^{\fl{t_km_n}}\Prob{\mathcal A_{k-1}, C^n_k = l, J^n_k \leq \fl{s_k m_n}, C^n_{k+1} = \fl{tm_n}, \mathcal E^{n, k} \given\Big \hat{\bm D} = \xi^*}\\
            &= m_n\sum_{l=0}^{\fl{t_k m_n}}\Prob{\mathcal A_{k-1}, C^n_k = l, \mathcal E^{n, k-1} \given\Big \hat{\bm D} = \xi^*}\\
            &\quad\quad\quad\quad\quad \times \Prob{J^n_k \leq \fl{s_k m_n}, C^n_{k+1} = \fl{tm_n}, \mathcal E^{n, k} \given\Big \mathcal A_{k-1}, C^n_k = l, \mathcal E^{n, k-1}, \hat{\bm D} = \xi^*}\\
            &= \sum_{l=0}^{\fl{t_k m_n}}\widetilde Q_{l/m_n}^{n,k-1}\Prob{J^n_k \leq \fl{s_k m_n}, C^n_{k+1} = \fl{tm_n}, \mathcal E^{n, k} \given\Big \mathcal A_{k-1}, C^n_k = l, \mathcal E^{n, k-1}, \hat{\bm D} = \xi^*}\\
            &\leq \sum_{l=0}^{\fl{t_k m_n}}\frac{\widetilde Q_{l/m_n}^{n, k-1}}{m_n}\left(\frac{\sum_{j=1}^{\fl{s_k m_n}}(\xi_j^{n,*}-1)}{\sum_{j=1}^{l-k}(\xi_j^{n,*}-1)-(k-1)} \wedge 1\right)\\
            &\quad\quad\quad\quad \times \prod_{i=l+1}^{\fl{tm_n}-1}\left(1-\frac{\sum_{j=1}^{i-1}(\xi_j^{n,*}-1)}{n-i+k}\right)\times \frac{m_n\sum_{j=1}^{\fl{tm_n}-1-k}(\xi_j^{n,*}-1)-m_nk}{n-\fl{tm_n}}. 
    \end{align*}
    At this point it is helpful to rewrite these in terms of the discrete processes $\sigma^n(t)$: we have
    \begin{align*}
        & \widetilde Q_t^{n,k} \\
        &\leq \sum_{l=0}^{\fl{t_k m_n}}\frac{\widetilde Q_{l/m_n}^{n,k-1}}{m_n}\! \left(\frac{\sigma^n(s_k)}{\sigma^n(\tfrac{l - k}{m_n})-\frac{m_n(k-1)}{n}}\wedge 1\right) \! \! \prod_{i=l+1}^{\fl{tm_n}-1} \! \! \left(\! 1-\frac{n\sigma^n\left( \tfrac{i-1}{m_n}\right)}{(n-i+k)m_n} \! \right) \!\! \frac{\sigma^n(t-\tfrac{k+1}{m_n}) - \frac{km_n}{n}}{1-\frac{\fl{tm_n}}{n}}\\
        &= \int_0^{t_k} \! \! \widetilde Q_u^{n,k-1} \! \! \left(\frac{\sigma^n(s_k)}{\sigma^n(u - \tfrac{k}{m_n})-\frac{m_n(k-1)}{n}}\wedge 1\right)\!\prod_{i=\fl{um_n}+1}^{\fl{tm_n}-1} \! \left(1-\frac{n\sigma^n\left(\tfrac{i-1}{m_n}\right)}{(n-i+k)m_n}\right)\sigma^n(t - \tfrac{1}{m_n})\,du\\
        &\hspace{12.5cm} + o(1).
    \end{align*}
    where the $o(1)$ term is a.s.\,uniform in $t$. We now aim to estimate the integrand for each $u \in [0, t_k]$. We know that, almost surely,
    \begin{enumerate}[(a)]
        \item $\widetilde Q_u^{n, k-1}$ is uniformly close to $R_u^{n, k-1}$ (by hypothesis);
        \item $\frac{\sigma^n(s_k)}{\sigma^n(u - k/m_n)-\frac{m_n(k-1)}{n}}\wedge 1$ is uniformly close to $\frac{\sigma^n(u-) \wedge \sigma^n(s_k)}{\sigma^n(u-)}$;
        \item $\prod_{i=\fl{um_n}+1}^{\fl{tm_n}-1} \! \! \left(1-\frac{n\sigma^n\left(\tfrac{i-1}{m_n}\right)}{(n-i+k)m_n}\right)$ is uniformly close to $\exp\left(-\int_{u}^t\sigma^n(r)\,dr\right)$, by imitating the argument of \autoref{thm: measurechangeconv};
        \item $\sigma^n(t-(k+1)/m_n)$ is uniformly close to $\sigma^n(t-)$.
    \end{enumerate}
    In addition, all of these estimates are uniform in $u \in [0, t_k]$. The right-hand side of each estimate is uniformly bounded in $n$ (as they converge in the Skorokhod metric). Thus by iteratively applying \autoref{lem: j1products} we see that the integrand of our estimate on $\widetilde Q_t^{n, k}$ is uniformly close to
        \[ u \mapsto R_u^{n, k-1} \frac{\sigma^n(u-) \wedge \sigma^n(s_k)}{\sigma^n(u-)}\exp\left(-\int_u^t\sigma^n(r)\,dr\right)\sigma^n(t-),\]
    and moreover this is uniform in $u \in [0, t_k]$. Using the uniformity in $u$ we may integrate to see that our integral bound on $\widetilde Q_t^{n, k}$ is uniformly (in $t$) close to
        \[ t \mapsto \int_0^{t_k}R_u^{n, k-1} \frac{\sigma^n(u-) \wedge \sigma^n(s_k)}{\sigma^n(u-)}\exp\left(-\int_u^t\sigma^n(r)\,dr\right)\sigma^n(t-)\,du = R_t^{n, k}.\]
    Thus for all $\varepsilon > 0$ there is almost surely an $N$ such that for all $t \in [0, T]$ we have $\widetilde Q_t^{n, k} \leq R_t^{n, k} + \varepsilon$ whenever $n \geq N$. The lower bound is completely analogous, and so $\widetilde Q_t^{n, k}$ is uniformly close to $R_t^{n, k}$, completing the proof.
\end{proof}

By integrating the processes in \autoref{prop: fdmarginaldensity} from $0$ to $t_{k+1}$ and shifting $k$ we deduce that
    \begin{multline*} \Prob{\frac{C_1^n}{m_n} \leq t_1, \dots, \frac{C_k^n}{m_n} \leq t_k, \frac{J_1^n}{m_n} \leq s_1, \dots, \frac{J_{k-1}^n}{m_n} \leq s_{k-1} \given\Big \hat{\bm D} = \xi^*}\\
    \overset d\to \Prob{\eta_1 \leq t_1, \dots, \eta_k \leq t_k, \zeta_1 \leq s_1, \dots, \zeta_{k-1} \leq s_{k-1} \given\Big \sigma},
    \end{multline*}
where $\eta_i$ are the atoms of a Poisson process with intensity $\sigma$ and $\zeta_i$ are the attachment points defined conditionally given $\sigma$ and $\eta$ as before. Now we can introduce the measure change to recover the law of the true rescaled branch times: note that
    \begin{align*}
        &\Prob{\frac{C_1^n}{m_n} \leq t_1, \dots, \frac{C_k^n}{m_n} \leq t_k, \frac{J_1^n}{m_n} \leq s_1, \dots, \frac{J_{k-1}^n}{m_n} \leq s_{k-1}, N_n \geq \fl{Tm_n}}\\
        &= \E{\Prob{\frac{C_1^n}{m_n} \leq t_1, \dots, \frac{C_k^n}{m_n} \leq t_k, \frac{J_1^n}{m_n} \leq s_1, \dots, \frac{J_{k-1}^n}{m_n} \leq s_{k-1} \,\Big|\, \hat{\bm D} = \xi^*}\Theta^n_{\fl{Tm_n}}(\xi^*)}\\
        &\to \E{\Prob{\eta_1 \leq t_1, \dots, \eta_k \leq t_k, \zeta_1 \leq s_1, \dots, \zeta_{k-1} \leq s_{k-1} \given\Big \sigma}\exp\left(\int_0^t\sigma_s\,ds\right)\frac{p(-\sigma_t)}{p(0)}}\\
        &= \E{\Prob{\eta_1 \leq t_1, \dots, \eta_k \leq t_k, \zeta_1 \leq s_1, \dots, \zeta_{k-1} \leq s_{k-1} \given\Big \sigma = \widetilde \sigma}}\\
        &= \Prob{Y_1 \leq t_1, \dots, Y_k \leq t_k, Z_1 \leq s_1, \dots, Z_{k-1} \leq s_{k-1}},
    \end{align*}
where in the third line we have used the Skorokhod representation and the fact that the measure change converges in $L^1$ in this setting. Thus we have the following convergence in distribution:

\begin{theorem}\label{thm: fdmarginals}
    Let $C_1^n, C_2^n, \dots$ be the times of repeats in the codeword used to generate a Bienaymé tree conditioned to have $n$ vertices, and $J_1^n, J_2^n, \dots$ the first occurrence times of these repeats. Let $Y_1, Y_2, \dots$ be the jump times of a Poisson process of intensity $\widetilde \sigma$, where $\widetilde \sigma$ is the process introduced in \autoref{sec: lbalphastab}, and let $Z_1, Z_2, \dots$ be the corresponding attachment points. Then
        \[ \frac{1}{m_n}(C_1^n, \dots, C_k^n, J_1^n, \dots, J_{k-1}^n) \overset d\to (Y_1, \dots, Y_k, Z_1, \dots, Z_{k-1}). \]
\end{theorem}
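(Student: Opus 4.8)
The plan is to assemble the statement from the three ingredients established above: the density-level convergence of \autoref{prop: fdmarginaldensity}, the convergence of the measure change in \autoref{thm: measurechangeconv}, and the negligibility of activation events and of the bad event $\{N^n < \fl{Tm_n}\}$ (\autoref{lem: badeventrare}). Throughout we fix a horizon $T$ large enough that all of $t_1,\dots,t_k,s_1,\dots,s_{k-1}$ are at most $T$, and we work on the common probability space furnished by the Skorokhod representation, on which $\sigma^n\to\sigma$ almost surely.

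\textbf{Step 1 (non-measure-changed setting).} In the setting $\hat{\bm D}=\xi^*$, \autoref{prop: fdmarginaldensity} gives $Q^{n,k}\to Q^k$ a.s.\ in $D([0,T])$. For each fixed $n$ the map $l\mapsto m_n^{-1}Q^{n,k}_{l/m_n}$ is the (sub-)probability mass function of $C^n_{k+1}/m_n$ on the relevant event, so integrating over $[0,t_{k+1}]$ approximates the corresponding cumulative probability with error $O(1/m_n)$ (using that $Q^{n,k}$ is uniformly bounded, since it converges in $D([0,T])$). Since the limit $Q^k$ has no atoms, $t_{k+1}$ is a.s.\ a continuity point and integration against $[0,t_{k+1}]$ is a continuity operation, so
\[
\Prob{\tfrac{C^n_1}{m_n}\le t_1,\dots,\tfrac{C^n_{k+1}}{m_n}\le t_{k+1},\tfrac{J^n_1}{m_n}\le s_1,\dots,\tfrac{J^n_k}{m_n}\le s_k \,\Big|\, \hat{\bm D}=\xi^*}\ \longrightarrow\ \int_0^{t_{k+1}}Q^k_t\,dt \qquad \text{a.s.}
\]
By the heuristic computation preceding \autoref{prop: fdmarginaldensity}, and since $\sigma$ has only countably many jumps so that the factors $\sigma_{u_i}/\sigma_{u_i^-}$ equal $1$ Lebesgue-a.e., the right-hand side equals $\Prob{\eta_1\le t_1,\dots,\eta_{k+1}\le t_{k+1},\zeta_1\le s_1,\dots,\zeta_k\le s_k\mid\sigma}$, where $(\eta_i)$ are the atoms of a Poisson process of intensity $\sigma$ and $(\zeta_i)$ the associated attachment points. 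Relabelling $k+1$ as $k$ gives the conditional statement with $k$ cut times and $k-1$ attachment points.

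\textbf{Step 2 (reintroduce the measure change).} Write $A^n=\{C^n_i/m_n\le t_i,\ i\le k;\ J^n_j/m_n\le s_j,\ j\le k-1\}$. By \autoref{prop: measurechangediscrete} with $m=\fl{Tm_n}$,
\[
\Prob{A^n,\ N^n\ge\fl{Tm_n}} = \E{\Prob{A^n\mid\hat{\bm D}=\xi^*}\,\Theta^n_{\fl{Tm_n}}(\xi^*)}.
\]
On the coupled space $\Prob{A^n\mid\hat{\bm D}=\xi^*}\to\Prob{\eta_i\le t_i,\zeta_j\le s_j\mid\sigma}$ a.s.\ by Step 1, and is bounded by $1$; meanwhile $\Theta^n_{\fl{Tm_n}}(\xi^*)=M^n_T\to M_T$ a.s.\ by \autoref{thm: measurechangeconv}, with $\E{M^n_T}=\Prob{N^n\ge\fl{Tm_n}}\to 1=\E{M_T}$, hence $M^n_T\to M_T$ in $L^1$ by Scheff\'e. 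A bounded-times-$L^1$-convergent argument then gives convergence of the product in $L^1$, so
\[
\Prob{A^n,\ N^n\ge\fl{Tm_n}}\ \longrightarrow\ \E{\Prob{\eta_i\le t_i,\zeta_j\le s_j\mid\sigma}\,M_T}.
\]
Since $\Prob{N^n\ge\fl{Tm_n}}\to1$ by \autoref{lem: badeventrare}, $\Prob{A^n}$ has the same limit. Finally, as all $t_i,s_j\le T$ the functional $\Prob{\eta_i\le t_i,\zeta_j\le s_j\mid\sigma}$ depends on $\sigma$ only through its restriction to $[0,T]$, so by the defining relation \eqref{eqn:tildesigma} of $\widetilde\sigma$ (and since the conditional law of $(Y_i,Z_i)$ given $\widetilde\sigma$ is given by the same kernel) this limit equals $\E{\Prob{\eta_i\le t_i,\zeta_j\le s_j\mid\sigma=\widetilde\sigma}}=\Prob{Y_i\le t_i,\ Z_j\le s_j}$, i.e.\ the joint cdf of $(Y_1,\dots,Y_k,Z_1,\dots,Z_{k-1})$. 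Because this limit law is absolutely continuous (so its cdf is continuous), convergence of the joint cdf at every point upgrades to convergence in distribution of the random vector, which is the claim.

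\textbf{Main obstacle.} The delicate point is the passage from the $D([0,T])$-convergence of the density processes in \autoref{prop: fdmarginaldensity} to honest convergence in distribution of the finite-dimensional vectors: one must check that integrating against $[0,t_{k+1}]$ is legitimate (a continuity point of the limit, with no loss of mass at the endpoint and a controlled Riemann-sum error), and that the resulting cdf convergence, together with absolute continuity of the limit, yields weak convergence. The second somewhat subtle step is the interchange of limit and expectation in the measure-change computation, which rests on $L^1$-convergence of $M^n_T$ via Scheff\'e (using $\E{M^n_T}\to\E{M_T}$), rather than on a.s.\ convergence alone.
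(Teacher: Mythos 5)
Your proposal is correct and follows essentially the same route as the paper: integrate the density-level convergence of \autoref{prop: fdmarginaldensity} over $[0,t_{k+1}]$ (with a shift of $k$), reintroduce the measure change via \autoref{prop: measurechangediscrete} using the Skorokhod coupling and the $L^1$-convergence of $M^n_T$ from Scheff\'e, and finally discard the event $\{N^n<\fl{Tm_n}\}$ by \autoref{lem: badeventrare}. The extra care you take over the Riemann-sum/continuity-point issue and the upgrade from cdf convergence to weak convergence is consistent with, and slightly more explicit than, the paper's argument.
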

\begin{proof}
    We have already shown that, for $t_1, \dots, t_k \leq T$ and $s_1, \dots, s_{k-1}$ with $s_i \leq t_i$, 
        \begin{multline*}
        \Prob{\frac{C_1^n}{m_n} \leq t_1, \dots, \frac{C_k^n}{m_n} \leq t_k, \frac{J_1^n}{m_n} \leq s_1, \dots, \frac{J_{k-1}^n}{m_n} \leq s_{k-1}, N^n \geq \fl{Tm_n}}\\
        \to \Prob{Y_1 \leq t_1, \dots, Y_k \leq t_k, Z_1 \leq s_1, \dots, Z_{k-1} \leq s_{k-1}} \end{multline*}
    and so it is enough to remove the $N^n \geq \fl{Tm_n}$ from the left-hand side (as then we have pointwise convergence of distribution functions). But we can do this since $\Prob{N^n < \fl{Tm_n}} \to 0$ by \autoref{lem: badeventrare}.
\end{proof}

\subsection{Gromov--Hausdorff--Prokhorov convergence of the trees}

We begin by converting the convergence in \autoref{thm: fdmarginals} into a statement about convergence of subtrees. We can do this as follows: let $T^n$ be the discrete rooted labelled tree of size $n$ as constructed from the algorithm in \autoref{sec: discrete}, and let $T^n(k)$ be the subtree spanned by the root and vertices labelled $1, 2, \dots, k$. On the event that labels $1, 2, \dots, k$ do not appear in the codeword before time $C^n_k$ (which we can show occurs with probability tending to 1), $T^n(k)$ precisely matches the tree constructed after time $C^n_k - 1$. Recall that $d^n$ denotes the graph distance on $V(T^n)$. The $\mathbb R$-tree corresponding to the rescaled discrete tree $(V(T^n(k)),\frac{1}{m_n}d^n|_{T^n(k)})$, is a.s.\ isometric to the $\R$-tree obtained from a line-breaking construction with cut times
    \[ y^n_i = \frac{1}{m_n}(C^n_i - 1), \quad 1 \leq i \leq k\]
and attachment times
    \[ z^n_i = \frac{1}{m_n}(J^n_i - 1), \quad 1 \leq i \leq k-1 \]
taken at time $y_k^n$. We endow $(V(T^n(k)),\frac{1}{m_n}d^n|_{V(T^n(k))})$ with the empirical measure $\mu^n_k = \frac{1}{k}\sum_{i=1}^k\delta_i$ on the vertices labelled $1, 2, \ldots, k$.

Analogously, we can consider the tree $\mathcal T$ constructed from the procedure in \autoref{sec: lbalphastab}, and recall that $\mathcal T(k) = \mathcal T_{Y_k}$ is the subtree obtained by terminating the process at time $Y_k$. Recall also that $\mu_k = \frac{1}{k} \sum_{i=1}^k \delta_{Y_i}$ and that, from (\ref{eq:asGHPconv}), we have
\[
\left(\mathcal T(k), d|_{\mathcal{T}(k)},\mu_k\right) \to (\mathcal{T},d,\mu)
\]
almost surely as $k \to \infty$.  

For each fixed $k$, we have that
    \[ (y_1^n, \dots, y_k^n, z_1^n, \dots, z_{k-1}^n) \overset d\to (Y_1, \dots, Y_k, Z_1, \dots, Z_{k-1}) \]
and it follows straightforwardly that
\begin{equation} \label{eq: fdds}
\left(V(T^n(k)),\frac{1}{m_n}d^n|_{V(T^n(k))}, \mu^{n}_k \right) \convdist \left(\mathcal T(k), d|_{\mathcal{T}(k)},\mu_k\right)
\end{equation}
for the Gromov--Hausdorff--Prokhorov distance. In other words, the random finite-dimensional distributions converge.

In order to complete the proof of \autoref{thm:scalinglimit}, we need a tightness argument to deduce that $(V(T^n),d^n/m_n,\mu^n) \overset d\to (\mathcal T,d,\mu)$ in the Gromov--Hausdorff--Prokhorov sense. We follow closely the strategy deployed by Haas and Miermont in Sections 4.3 and 4.4 of \cite{haasmiermbt}. That paper proves a collection of scaling limit theorems for Markov branching trees, a family of models which includes conditioned Galton--Watson trees. Indeed, Theorem 8 (Case 2) of \cite{haasmiermbt} is a version of \autoref{thm:scalinglimit} restricted to the special case of offspring distributions satisfying $\Prob{\xi = k} \sim c k^{-\alpha-1}$ as $k \to \infty$ for some $c > 0$. Haas and Miermont's method for showing convergence of the random finite-dimensional distributions makes use of the theory of fragmentation processes, and is completely different to ours. Their tightness proof, however, adapts well to our situation. Since we wish this paper to be self-contained, we reprove various estimates used in \cite{haasmiermbt} via our line-breaking approach rather than appealing to the fragmentation perspective.

As argued in Section 4.4 of \cite{haasmiermbt}, by Proposition 2.4 of \cite{EvansWinter}, the convergence in distribution $(V(T^n), d^n/m_n) \convdist (\Tcal,d)$ in the Gromov--Hausdorff sense entails that $(V(T^n), d^n/m_n,\mu_n)$ is a relatively compact sequence in the Gromov--Hausdorff--Prokhorov sense. Therefore, it suffices to identify any subsequential limit as $(\Tcal,d,\mu)$. But this then follows from the convergence of the random finite-dimensional distributions (\ref{eq: fdds}). Therefore, in what follows we may ignore the measures and just prove tightness for the Gromov--Hausdorff distance.

In this subsection, let us abuse notation and write $T^n$ in place of $(V(T^n), d^n)$ and $T^n(k)$ in place of $(V(T^n(k)),d^n|_{V(T^n(k))})$. By the principle of accompanying laws (see e.g.\ Theorem 3.2 of \cite{Billingsley}), it is enough to show
    \[ \lim_{k \to \infty} \limsup_{n \to \infty} \Prob{d_{\mathrm{GH}}(T^n(k),T^n) > \varepsilon m_n} = 0\]
for any $\varepsilon > 0$. Note that $T^n \setminus T^n(k)$ is a forest: indeed, conditionally on the number of vertices in $T^n(k)$ and the number of edges from $T^n(k)$ to its complement, it is a forest of i.i.d.\ Bienaym\'e trees conditioned on its total size and number of trees. Moreover, $d_{\mathrm{GH}}\left(T^n(k), T^n\right)$ is at most the maximum height of any individual tree in this forest. Write $F_1^n(k), F_2^n(k), \ldots$ for the sizes of the trees in the forest, and write $H(m)$ for the height of a Bienaym\'e tree with the same offspring distribution conditioned to have size $m$. Then
    \begin{equation} \label{eq:Markovtailbound}
    \Prob{d_{\mathrm{GH}}\left(T^n(k), T^n\right) > \varepsilon m_n} \le \E{\sum_{i \ge 1} \Prob{H(F_i^n(k)) > \varepsilon m_n | F_i^n(k)}}.
    \end{equation}

We adapt Lemma 33 of \cite{haasmiermbt} to prove the following.

\begin{lemma_restate} \label{lem: HMtailbound}
For each $q > 0$, there exists a universal constant $M_{q} > 0$ such that
\[
\Prob{H(n) \ge x m_n} \le \frac{M_{q}}{x^{q}}
\]
for all $x > 0$ and all $n \ge 1$.
\end{lemma_restate}

\begin{proof}
Lemma 33 of \cite{haasmiermbt} makes use of a hypothesis $(H')$, which Haas and Miermont only prove holds in the restricted case $\Prob{\xi = k} \sim c k^{-\alpha-1}$ as $k \to \infty$ for some $c > 0$. However, all that is really needed in the proof is that for fixed $\gamma > 1$,
\begin{equation} \label{eq: replaceshypothesisH'}
 \liminf_{n \to \infty} m_n \E{1 - \sum_{i=1}^{\hat{D}_1} \left(\frac{A_i}{n}\right)^{\gamma}} > 0, 
\end{equation}
where $A_1, A_2, \ldots$ are the sizes of the subtrees from the children of the root. We will give a crude bounding argument to show that this condition also holds in our case. Writing $\beta = \gamma - 1 > 0$, we observe that
\[ m_n \E{1 - \sum_{i=1}^{\hat{D}_1} \left(\frac{A_i}{n}\right)^{\gamma}} \geq m_n\E{1 - \left(\frac{A_*}{n}\right)^\beta},\]
where $A_*$ is a size-biased pick from $(A_i)$. Letting $F_1, F_2, \dots$ be i.i.d.\ random variables having the same distribution as the size of an unconditioned Bienaymé tree, and setting $S_n = \Xi_n - n$, where $(\Xi_n)_{n \geq 1}$ is the random walk with increments $\xi$ as before, we see that the right-hand side of the above is
\begin{align*}
    &m_n \sum_{d = 1}^{n - 1}\sum_{j = 1}^{n - 1} \left(1 - \left(\frac jn\right)^\beta\right)\Prob{\xi = d} \cdot d \cdot \frac{j}{n-1} \frac{\Prob{F_1 = j}\Prob{F_1 + \dots + F_{d-1} = n - 1 - j}}{\Prob{F_1 + \dots + F_d = n - 1}}\\
    &= m_n\sum_{j=1}^{n-1}\sum_{d = 1}^{n-1} \left(1 - \left(\frac jn\right)^\beta\right) \frac{(d-1)\Prob{\xi = d}}{n-1-j}\frac{\Prob{S_j = -1}\Prob{S_{n-1-j} = -(d-1)}}{\Prob{S_{n-1} = -d}},
\end{align*}
using the cycle lemma. For the sake of obtaining a lower bound, we may restrict our attention to $j \in [\frac n3, \frac{2n}{3}]$ and $d \in [a_n, 2a_n]$. Here, the term $1 - \left(\frac jn\right)^\beta$ is uniformly bounded away from 0, and $n - 1 - j = \Theta(n)$, so for sufficiently large $n$ we have that (\ref{eq: replaceshypothesisH'}) is bounded below by a constant multiple of
\begin{align*}
     &\quad\,\, m_n\sum_{j = n/3}^{2n/3} \sum_{d = a_n}^{2a_n} \frac 1n (d-1)\Prob{\xi = d} \frac{\Prob{S_j = -1}\Prob{S_{n-1-j} = -(d-1)}}{\Prob{S_{n-1} = -d}}\\ 
    &= m_n\sum_{j}\frac 1n\E{(\xi - 1)\frac{\Prob{S_j = -1}\Prob{S_{n-1-j} = -(\xi-1) \mid \xi}}{\Prob{S_{n-1} = -\xi \mid \xi}}\IndEvent{a_n \leq \xi \leq 2a_n}}\\
    &= \sum_{j} \frac{m_n a_{n-1}}{na_ja_{n-1-j}}\E{(\xi - 1)\frac{a_j\Prob{S_j = -1} a_{n-1-j}\Prob{S_{n-1-j} = -(\xi-1) \mid \xi}}{a_{n-1}\Prob{S_{n-1} = -\xi \mid \xi}}\IndEvent{a_n \leq \xi \leq 2a_n}}\\
    &= \Omega(1)\sum_{j} \frac{m_n}{na_n}\E{(\xi - 1)\frac{a_j\Prob{S_j = -1} a_{n-1-j}\Prob{S_{n-1-j} = -(\xi-1) \mid \xi}}{a_{n-1}\Prob{S_{n-1} = -\xi \mid \xi}}\IndEvent{a_n \leq \xi \leq 2a_n}}.
\end{align*}
For any fixed $\delta > 0$, three applications of the local limit theorem yield that for sufficently large $n$, (\ref{eq: replaceshypothesisH'}) is bounded below by
\[ \Omega(1)\sum_{j}\frac{m_n}{na_n} \E{(\xi - 1)\frac{\left(p(0) - \delta\right)\left(p\left(\frac{-(\xi - 1)}{a_{n-1-j}}\right) - \delta\right)}{p\left(\frac{-\xi}{a_{n-1}}\right) + \delta}\IndEvent{a_n \leq \xi \leq 2a_n}}.\]
As $(a_n)$ is regularly varying, we can find a constant $R$ such that $a_{n-1-j} \geq \frac{a_n}{R}$ whenever $n$ is sufficiently large and $j \in [\frac n3, \frac{2n}{3}]$. For such $n$, we observe that on the event $a_n \leq \xi \leq 2a_n$ we have $\frac{\xi - 1}{a_{n-1-j}} \in [0, 2R]$ and $\frac{\xi}{a_{n-1}} \in [0, 2R]$. Thus taking $\delta < \inf_{x \in [0, 2R]}p(-x)$, we obtain the existence of a constant $\eta > 0$ such that
    \[ \frac{\left(p(0) - \delta\right)\left(p\left(\frac{-(\xi - 1)}{a_{n-1-j}}\right) - \delta\right)}{p\left(\frac{-\xi}{a_{n-1}}\right) + \delta} \geq \eta \]
whenever $a_n \leq \xi \leq 2a_n$. Thus we obtain that (\ref{eq: replaceshypothesisH'}) is bounded below by
\begin{align*}
    \Omega(1) \sum_{j=n/3}^{2n/3} \frac{m_n}{na_n}\E{(\xi - 1)\eta \IndEvent{a_n \leq \xi \leq 2a_n}} &= \Omega(1) \sum_{j=n/3}^{2n/3} \frac{m_n}{n}\Prob{a_n \leq \xi \leq 2a_n}\\
    &= \Omega(1) \times m_n\Prob{\frac{\xi}{a_n} \in [1, 2]}.
\end{align*}
By standard theory of Lévy processes we know $m_n\Prob{\frac{\xi}{a_n} \in [1, 2]} \to \nu_{\alpha}([1, 2])$, where $\nu_\alpha$ is the Lévy measure of the $\alpha$-stable Lévy process $L$. In particular, our lower bound has a strictly positive limit.

The proof now follows as in Lemma 33 of \cite{haasmiermbt}. We provide a sketch of the argument here: first, without loss of generality we can focus on large values of $q$, specifically $q > \frac{\alpha}{\alpha - 1}$. Pick some $\varepsilon > 0$ such that $\gamma := q(\frac{\alpha - 1}{\alpha} - \varepsilon) > 1$. Now we can find a new sequence $\widetilde m_n \sim m_n$ such that
    \[ \widetilde m_n \E{1 - \sum_{i = 1}^{\widehat{D_1}} \left(\frac{A_i}{n}\right)^\gamma} \geq \tilde M \text{ for all } n \quad \text{ and } \quad \frac{\widetilde m_k}{\widetilde m_n} \leq \left(\frac kn\right)^{\frac{\alpha - 1}{\alpha} - \varepsilon} \text{ for all } 1 \leq k \leq n, n \geq N \]
for some fixed $\tilde M$ and $N$. Now pick $M_q$ large enough that the following hold:
\begin{itemize}
    \item $(1-u)^{-q} \leq 1 + 2qu$ for all $u \in [0, M_q^{-q}]$;
    \item $M_q \geq (2q/\tilde M)^q$;
    \item $M_q \geq \E{H(n)^q}/{\widetilde m_n}^q$ for all $n \leq N$.
\end{itemize}
We prove the result via a double induction: we prove the main result by induction on $n$, and to prove the result for a given $n$, we prove by induction on $i \geq 1$ that
    \[ \Prob{H(n) < x\widetilde m_n} \geq 1 - \frac{M_q}{x^q} \text{ for all } x \in \left(0, \frac{i}{\widetilde m_n}\right). \]
Markov's inequality handles the case $n \leq N$ (for all $i$), and the right-hand side is negative for small enough values of $x$ (and $i$), so we only need to consider $n > N, x > M_q^{1/q}$. Since $H(n)$ is 1 plus the maximum height across all subtrees rooted at children of the root,
\begin{align*}
    \Prob{H(n) < x\widetilde m_n} &= \E{\prod_{i = 1}^{\widehat D_1} \Prob{H(A_i) < x\widetilde m_n - 1 \given\big A_i}}\\
        &\geq \E{\prod_{i = 1}^{\widehat D_1}\left(1 - \frac{M_q \widetilde m_{A_i}^q}{(x\widetilde m_n - 1)^q}\right)^+}\\
        &\geq \E{1 - \sum_{i = 1}^{\widehat D_1}\frac{M_q \widetilde m_{A_i}^q}{(x\widetilde m_n - 1)^q}}.
\end{align*}
Note that the first inequality is valid by our inductive hypotheses: on the event that $(A_i)$ is a non-trivial partition, we can use the inductive hypothesis for $n$, and on the event that it is the trivial partition, we can use that $x\widetilde m_n - 1 < i - 1$, so the inductive hypothesis for $i$ is valid. Now we can use the bounds $\frac{1}{(x\widetilde m_n -1)^q} \leq \frac{1}{(x\widetilde m_n)^q}\left(1 + \frac{2q}{x \widetilde m_n}\right)$ (valid as $x > M_q^{1/q}$) and $\left(\frac{\widetilde m_{A_i}}{\widetilde m_n}\right)^q \leq \left(\frac{A_i}{n}\right)^{\gamma}$ (valid as $n \geq N$). Some further algebra implies the required bound.
\end{proof}

We continue to proceed exactly as in \cite{haasmiermbt}. Note that since $(m_n)_{n \ge 1}$ is regularly varying of index $1-1/\alpha$, we have that $(m_n^{2\alpha/(\alpha-1)}n^{-1})_{n \ge 1}$ is regularly varying of index 1 and so we can find a constant $C$ such that 
\[
\frac{m_k^{2\alpha/(\alpha-1)}k^{-1}}{m_n^{2\alpha/(\alpha-1)}n^{-1}} \le C\sqrt{\frac{k}{n}}
\]
for all $1 \le k \le n$. Combining (\ref{eq:Markovtailbound}) and \autoref{lem: HMtailbound}, and taking $q=2\alpha/(\alpha-1)$, we obtain that for any $n$ and $k$,
\begin{align*}
    \Prob{d_{\mathrm{GH}}\left(T^n(k), T^n\right) > \varepsilon m_n} & \le \frac{M_q}{\varepsilon^{q}} \E{\sum_{i \ge 1} \left(\frac{m_{F_i^n(k)}}{m_n}\right)^{2\alpha/(\alpha-1)} } \\
    &\le  \frac{M_q}{\varepsilon^{q}} \E{\sum_{i\ge 1} \frac{F_i^n(k)}{n-|T^n(k)|} \frac{m_{F_i^n(k)}^{2\alpha/(\alpha-1)} /F_i^n(k)}{m_n^{2\alpha/(\alpha-1)}/n}} \\
    & \le \frac{C M_q}{\varepsilon^{q}} \E{\sum_{i\ge 1} \frac{F_i^n(k)}{n-|T^n(k)|} \sqrt{\frac{F_i^n(k)}{n}}} \\
    & \le R(\varepsilon) \E{\sqrt{\frac{F_*^n(k)}{n}}},
\end{align*}
where $R(\varepsilon)$ is a constant depending only on $\varepsilon$ and $F_*^n(k)$ is a size-biased pick from the sizes $(F_1^n(k), F_2^n(k), \ldots)$ of the trees in $T^n \setminus T^n(k)$. The key result to prove is the following:

\begin{proposition_restate}\label{prop: tightnessprop}
    For each fixed $k$, we have $F_*^n(k)/n \convdist F_*(k)$ as $n \to \infty$, where $F_*(k)$ is a random variable stochastically dominated by the $\mathrm{Beta}(1-1/\alpha,k)$ distribution and so, in particular, $F_*(k) \convdist 0$ as $k \to \infty$.
\end{proposition_restate}

Before we prove this proposition, we note that it is sufficient to deduce the desired tightness, as we have
\begin{align*}
    \lim_{k \to \infty} \limsup_{n \to \infty} \Prob{d_{GH}\left(T^n(k), T^n\right) > \varepsilon m_n} &\leq R(\varepsilon) \lim_{k \to \infty} \limsup_{n \to \infty} \E{\sqrt{F_*^n(k)/n}\,}\\
    &= R(\varepsilon) \lim_{k \to \infty} \E{\sqrt{F_*(k)}} = 0.
\end{align*}

\begin{proof}[Proof of \autoref{prop: tightnessprop}]
In what follows, let $L = |T^n(k)|$ be the number of vertices in $T^n(k)$, and let $N$ be the number of edges from $T^n(k)$ to $T^n \setminus T^n(k)$. One can show that conditionally given $L$ and $N$, the forest $T^n \setminus T^n(k)$ has the law of $N$ i.i.d.\ Bienaymé trees (with offspring distribution $\xi$) conditioned to have total size $n - L$.

We study the conditional probabilities $\Prob{F_*^n(k) \geq \fl{xn} \given\big L, N}$ for fixed $x > 0$. This probability is zero whenever $x \geq 1$, so we can consider $x \in (0, 1)$. Let $F_1, F_2, \dots$ be i.i.d.\ random variables, also independent of $L$ and $N$, distributed as the size of an unconditioned Bienaymé tree with offspring distribution $\xi$. Then 
    \begin{align*} 
    & \Prob{F_*^n(k) \geq \fl{xn} \given\big L, N} \\
    & \qquad = \sum_{j=\fl{xn}}^n\! \frac{\frac{j}{n-L}N\Prob{F_1 = j \given\big L, N}\Prob{F_2 + \dots + F_N = n - L - j \given\big\! L, N}}{\Prob{F_1 + \dots + F_N = n - L \given\big L, N}}.
    \end{align*}
Taking $S_n = \Xi_n - n$ as before, independently of $L$ and $N$, we know by the cycle lemma that, for any fixed $a, b \geq 1$,
    \[ \Prob{F_1 + \dots + F_a = b} = \frac{a}{b}\Prob{S_b = -a}.\]
Using this, we have
    \[ \Prob{F_*^n(k) \geq \fl{xn} \given\big L, N} = \sum_{j=\fl{xn}}^n \frac{\Prob{S_j = -1}\Prob{F_1 + \dots + F_{N-1} = n - L - j \given\big L, N}}{\Prob{S_{n-L} = -N \given\big L, N}}.\]
We now wish to apply local limit theorems to this expression. We already have a local limit for $(S_n)$, namely
    \[ a_n \Prob{S_n = r} = p\left(\frac{r}{a_n}\right) + o(1) \]
as $n \to \infty$, uniformly in $r$, but by Theorem 4.2.1 of \cite{locallimits} we also have a local limit for sums of $F_i$: specifically, we can find a sequence $b_n$ such that $\frac{F_1 + \dots + F_n}{b_n}$ converges in distribution to a $(1/\alpha)$-stable subordinator at time $1$. Letting $q$ be the density of this limit law, we have
    \[ b_n \Prob{F_1 + \dots + F_n = r} = q\left(\frac{r}{b_n}\right) + o(1)\]
as $n \to \infty$, again uniformly in $r$. We note that $b_n$ is regularly varying of index $\alpha$, and we may further choose the sequence so that $b_{\fl{a_n}} \sim n$ as $n \to \infty$.

Guided by our limits, we write the conditional probability in the more suggestive format
    \[  \frac{1}{n}\sum_{j = \fl{xn}}^n \frac{a_{n-L}}{a_j}\cdot \frac{n}{b_{N-1}} \cdot\frac{a_j \Prob{S_j = -1} \cdot b_{N-1}\Prob{F_1 + \dots + F_{N-1} = n - L - j \given\big L, N}}{a_{n-L}\Prob{S_{n-L} = -N \given\big L, N}}\]
which, via the substitution $j = \fl{un}$, can in turn be expressed as the integral $\int_x^1 I_k^n(u)\,du$, where
    \[ I_k^n(u) = \frac{a_{n-L}}{a_{\fl{un}}}\cdot \frac{n}{b_{N-1}} \cdot \frac{a_{\fl{un}}\Prob{S_{\fl{un}}=-1} \cdot b_{N-1}\Prob{F_1 + \dots F_{N-1} = n-\fl{un}-L \given\big L, N}}{a_{n-L}\Prob{S_{n-L} = -N \given\big L, N}}.\]
We seek an almost sure pointwise limit for $I_k^n$. Let $Y_k$ be the time of the $k$th cut in the continuous line-breaking construction. Then we know that
    \[ \frac{N}{a_n} \overset d\to \widetilde \sigma_{Y_k} \quad \text{ and } \quad \frac{L}{m_n} \overset d\to Y_k \]
as $n \to \infty$ (with $k$ fixed), and these convergences hold jointly. Let us work on a probability space where these convergences hold almost surely. Then, almost surely, for all $u \in (x, 1)$ the following hold:
\begin{itemize}
    \item $L \ll n$ as $n \to \infty$, so $n - L \sim n$. Thus $\frac{n-L}{\fl{un}} \to u^{-1}$;
    \item Since the numerator and denominator of $\frac{n-L}{\fl{un}}$ both tend to $+\infty$ and $(a_n)$ is regularly varying of index $1/\alpha$ it follows that $\frac{a_{n-L}}{a_{\fl{un}}} \to u^{-1/\alpha}$;
    \item $\frac{a_n}{N-1} \to (\widetilde \sigma_{Y_k})^{-1}$. As $(b_n)$ is regularly varying of index $\alpha$ it follows that $\frac{b_{\fl{a_n}}}{b_{N-1}} \to (\widetilde \sigma_{Y_k})^{-\alpha}$, and so $\frac{n}{b_{N-1}}$ also converges to $(\widetilde \sigma_{Y_k})^{-\alpha}$.
    \item By the local limit results for $S$ and $F$, the fraction converges to 
        \[ \frac{p(0) \cdot q\left((1-u)/(\widetilde \sigma_{Y_k})^\alpha\right)}{p\left(-\widetilde \sigma_{Y_k}\right)}. \]
\end{itemize}
Combining everything, we obtain the pointwise limit
    \[ \lim_{n \to \infty} I_k^n(u) = u^{-\frac1\alpha}(\widetilde \sigma_{Y_k})^{-\alpha}\frac{p(0)}{p(-\widetilde \sigma_{Y_k})}q\left(\frac{1-u}{\widetilde \sigma_{Y_k}^\alpha}\right) =: \mathcal I_k(u).\]
By studying the above estimates more closely, we see that there exist constants $R = R(x) \leq x^{-\alpha} + 100$ and (random) $n_0 \in \mathbb N$ such that for all $n \geq n_0$ and all $u \in (x, 1)$ we have $I_k^n(u) \leq R \cdot \mathcal I_k(u)$. In particular, if we can show that $\int_x^1 \mathcal I_k(u) du < \infty$ almost surely, then by dominated convergence it will follow that
    \begin{equation}
    \Prob{F_*^n(k) \geq \fl{xn} \given\big L, N} \to \int_x^1 \mathcal I_k(u)\,du. \label{eqn:condtailprob}
    \end{equation}
To estimate $\mathcal I_k(u)$ we can use the duality relation (14.41) in \cite{satobook} between $p$ and $q$: we see that
    \[ \mathcal I_k(u) = u^{-\frac1\alpha}(1-u)^{-1-\frac1\alpha} \widetilde \sigma_{Y_k} \frac{p(0)\cdot p\left(-\frac{\widetilde \sigma_{Y_k}}{(1-u)^{\frac 1\alpha}}\right)}{p(-\widetilde \sigma_{Y_k})}, \quad u \in [x, 1)\]
and as $u \to 1$ we have $\mathcal I_k(u) \to 0$ almost surely (to see this use the stretched exponential decay estimate (\ref{eqn:satoasymptotic}) for $p$). Thus $\mathcal I_k$ is almost surely the restriction to $[x, 1)$ of a continuous function on $[x, 1]$ and so is bounded. We deduce that (\ref{eqn:condtailprob}) holds. Now we can use the bounded convergence theorem to eliminate the conditioning, and obtain
    \[ \Prob{F_*^n(k) \geq \fl{xn}} \to \E{\int_x^1 \mathcal I_k(u)\,du} = \int_x^1 \E{\mathcal I_k(u)}\,du, \quad x \in (0, 1).\]
By monotone convergence we also have $\int_0^1 \E{\mathcal I_k(u)}\,du \leq 1$, and so we may define a right-continuous non-increasing function $G_k : \R \to [0, 1]$ by
    \[ G_k(t) = \begin{cases}1 & t < 0\\ \int_t^1 \E{\mathcal I_k(u)}\,du & 0 \leq t < 1\\0 & t \geq 1.\end{cases}\]
It follows that we can construct a random variable $F_*(k)$ with c.d.f.\ $1 - G_k$. We have
    \begin{equation} 
    \lim_{n \to \infty} \Prob{F_*^n(k) \geq \fl{xn}} = G_k(x) \label{eqn:tailprob}
    \end{equation}
for all $x \neq 0$. If $G_k(0) = 1$ then the convergence also holds for $x = 0$; if not, then $G_k$ is discontinuous at 0. It follows that (\ref{eqn:tailprob}) holds at all continuity points of $G_k$, and so $F_*^n(k)/n \overset d\to F_*(k)$.

It remains to identify the law of $F_*(k)$. We will show that it is a mixture of a point mass at $0$ (with weight $1 - G_k(0)$) and a $\text{Beta}(1-\frac 1\alpha, k)$ distribution (with weight $G_k(0)$). If this is the case then 
    \[
    \E{F_*(k)} = G_k(0) \cdot \frac{1-1/\alpha}{k+1-1/\alpha} \leq \frac{1-1/\alpha}{k+1-\frac1\alpha} \to 0
    \]
as $k \to \infty$, and it follows immediately that $F_*(k) \convdist 0$ as $k \to \infty$.

We study the conditional law of $F_*(k)$ given that $F_*(k) > 0$. This has density function $\phi_k(x) = c\E{\mathcal{I}_k(x)}$, where $c = 1/G_k(t)$. Applying the measure change at the stopping time $Y_k$ allows us to write this density as
    \[ \phi_k(x) = cx^{-\frac1\alpha}(1-x)^{-1-\frac1\alpha}\E{\sigma_{Y_k}\exp\left(\int_0^{Y_k}\sigma_s\,ds\right)p\left(-\frac{\sigma_{Y_k}}{(1-x)^{\frac1\alpha}}\right)}. \]
Conditionally given $\sigma$, the stopping time $Y_k$ has density
    \[ \psi_k(t \given\big \sigma) = \frac{\sigma_t}{(k-1)!}\left(\int_0^t\sigma_s\,ds\right)^{k-1}\exp\left(-\int_0^t\sigma_s\,ds\right), \quad t \ge 0,\]
and so by the tower law (and Fubini's theorem) we obtain
    \[ \phi_k(x) = \frac{cx^{-\frac1\alpha}(1-x)^{-1-\frac1\alpha}}{(k-1)!}\int_0^\infty\E{\sigma_t^2 \left(\int_0^t \sigma_s\,ds\right)^{k-1}p\left(-\frac{\sigma_t}{(1-x)^{\frac1\alpha}}\right)}\,dt. \]
By the scaling property of the subordinator $\sigma$, we have the following equality in distribution (as processes):
    \[ \left(\sigma_t, t \geq 0\right) \overset d= \left((1-x)^{\frac1\alpha}\sigma_{t(1-x)^{-1+\frac1\alpha}}, t \geq 0\right).\]
Thus we may express the density $\phi_k(x)$ as
\begin{align*}
    &\frac{cx^{-\frac1\alpha}(1-x)^{-1+\frac1\alpha}}{(k-1)!} \! \! \int_0^\infty \! \! \E{\sigma_{t(1-x)^{-1+\frac1\alpha}}^2\left((1-x)^\frac1\alpha\int_0^t \! \sigma_{s(1-x)^{-1+\frac1\alpha}}\,ds\right)^{k-1} \!\!\! p\left(-\sigma_{t(1-x)^{-1+\frac1\alpha}}\right)}\,dt\\
    &= \frac{cx^{-\frac1\alpha}(1-x)^{-1+\frac1\alpha}}{(k-1)!} \! \! \int_0^\infty \! \! \E{\sigma_{t(1-x)^{-1+\frac1\alpha}}^2\left((1-x)\int_0^{t(1-x)^{-1+\frac1\alpha}} \! \! \! \! \! \sigma_{r}\,dr\right)^{k-1} \!\!\!\!\! p\left(-\sigma_{t(1-x)^{-1+\frac1\alpha}}\right)}\,dt\\
    &= \frac{cx^{-\frac1\alpha}(1-x)^{k-2+\frac1\alpha}}{(k-1)!} \! \! \int_0^\infty \! \! \E{\sigma_{t(1-x)^{-1+\frac1\alpha}}^2\left(\int_0^{t(1-x)^{-1+\frac1\alpha}}\sigma_{r}\,dr\right)^{k-1} \!\!\! p\left(-\sigma_{t(1-x)^{-1+\frac1\alpha}}\right)}\,dt\\
    &= \frac{x^{-\frac1\alpha}(1-x)^{k-1}}{(k-1)!} \cdot c\int_0^\infty \E{\sigma_u^2\left(\int_0^u\sigma_{r}\,dr\right)^{k-1} \!\!\! p\left(-\sigma_{u}\right)}\,du,
\end{align*}
after various changes of variable. The integral term in the last expression does not depend on $x$ and thus acts only as a normalising constant for the density. In particular, we can identify this density as that of the $\text{Beta}(1-\frac1\alpha, k)$ distribution.
\end{proof}

\begin{remark}
We expect that, in fact, $F_*(k)$ has no mass at zero and has precisely a $\text{Beta}(1-\frac 1\alpha, k)$ distribution, and should be interpreted as a size-biased pick from among the component sizes of the forest $\Tcal \setminus \Tcal(k)$. Indeed such a size-biased pick has distribution $\text{Beta}(1-\frac 1\alpha, k)$. To the best of our knowledge, this has not previously been explicitly identified in the literature for a general $k$, although it is easy to deduce it from pre-existing results, as we now show. In the case $k = 1$, the ranked sequence of component sizes of $\mathcal T \setminus \mathcal T(1)$ forms the \emph{fine spinal mass partition} defined in \cite{spinalpartition}. By Corollary 10 of the same paper, the sequence of sizes follows a Poisson--Dirichlet distribution with parameters $(\frac1\alpha, 1 - \frac1\alpha)$. (See Chapter 3 of Pitman~\cite{PitmanCSP} for more information about the Poisson--Dirichlet distributions.)

Now let $P^k = (P_1^k, P_2^k, \dots)$ be the ranked sequence of component sizes of $\mathcal T \setminus \mathcal T(k)$. Conditionally given $P^{k-1}$, we can sample $P^k$ as follows: we take a size-biased pick from the components of $\mathcal T \setminus \mathcal T(k-1)$, then reveal the fine spinal mass partition of this component. This has the effect of replacing some $P_I^{k-1}$ with the collection $\{ P_I^{k-1}\tilde P_i : i \in \mathbb N \}$, where $I$ is a size-biased index, $\tilde P$ is a copy of $P^1$ independent of everything else, and finally we reorder the sequence to preserve the decreasing order. By Theorem 3.1 in \cite{repeatedspinalpartitionlaw}, we see that if $P^{k-1} \sim \mathrm{PD}(\frac1\alpha, \theta)$ for some $\theta > 0$, then $P^k \sim \mathrm{PD}(\frac1\alpha, \theta + 1)$. It follows by induction that $P^k \sim \mathrm{PD}(\frac1\alpha, k - \frac1\alpha)$ for every $k \ge 1$. Now $F_*(k)$ is just a size-biased pick from $P^k$, and thus it has law $\text{Beta}(1-\frac1\alpha, k)$. 
\end{remark}

\section{Relation to Wang's construction}
In this section, we show that our construction provides a new perspective on Wang's construction of the stable tree from \cite{minmin}. The key result is \autoref{prop: sigmatildeidentified} below, which connects $\widetilde{\sigma}$ and the jumps of a normalised $\alpha$-stable excursion.

Suppose $\mathbf{x} = (x_1, x_2, \ldots)$ is a positive real sequence such that, for any $\varepsilon > 0$, the set $\{ i \in \mathbb N : x_i \geq \varepsilon \}$ is finite. Then we can rearrange the terms into decreasing order: write $(\mathbf{x})^{\downarrow}$ for this rearrangement. This notion clearly generalises to families $(x_\gamma : \gamma \in \Gamma)$ of positive real numbers indexed by arbitrary countable sets. Recall that $\Delta_1 \ge \Delta_2 \ge \dots \ge 0$ are the ordered jumps of $\widetilde{\sigma}$. Let $\Delta_1^n \ge \Delta_2^n \ge \dots \ge 0$ denote the ordered (out-)degrees in the finite-$n$ model, so that $(\Delta_1^n, \Delta_2^n, \ldots, \Delta_n^n) = (D_1, D_2, \ldots, D_n)^{\downarrow}$. For all $m \ge n+1$, let $\Delta^n_m = 0$ so that we append an infinite sequence of 0's to the end of this vector.

Let $\ell_2^{\downarrow} = \{(x_1, x_2, \ldots) \in \ell_2 : x_1 \ge x_2 \ldots \ge 0\}$ which we endow with the usual $\ell_2$ norm. For sequences indexed by an arbitrary countable set $\Gamma$ we instead write $\ell_2(\Gamma)$ for the corresponding set of square summable sequences, and $\ell_2^+(\Gamma)$ when all of the elements of the sequences are strictly positive. 

    \begin{proposition_restate} \label{prop: ell2conv}
        As $n \to \infty$,
        \[
        a_n^{-1} (\Delta_1^n, \Delta_2^n, \ldots) \convdist (\Delta_1, \Delta_2, \ldots)
        \]
        in $\ell_2^{\downarrow}$.
    \end{proposition_restate}

The proof of this result will make use of the theory of size-biased point processes developed by Aldous in Section 3.3 of \cite{aldouscritrg}. We need a little set-up.

Suppose that $\Gamma$ is a countable index-set and we have $\mathbf{Y} = \{Y_{\gamma}: \gamma \in \Gamma\}$ such that $\mathbf{Y} \in \ell_2^+(\Gamma)$. Conditionally on $\{Y_{\gamma}: \gamma \in \Gamma\}$, let $E_{\gamma}$ be exponentially distributed with parameter $Y_{\gamma}$, independently for different $\gamma \in \Gamma$. If the sequence $\mathbf{Y}$ is summable then it is straightforward to check that putting the values $Y_{\gamma}$ in increasing order of their associated exponential random variables has precisely the effect of putting them in size-biased random order. But the construction also makes sense when the sequence is not itself summable, as long as it is square-summable. For $t \in \R^+$, define
\[
S(t) = \sum_{\gamma \in \Gamma} Y_{\gamma} \IndEvent{E_{\gamma} \le t}.
\]
Then $S(t) < \infty$ a.s.\ for all $t$. Let $S_{\gamma} = S(E_{\gamma}-)$. The set $\Xi := \{(S_{\gamma},Y_{\gamma}): \gamma \in \Gamma\}$ is called  the \emph{size-biased point process} associated with $\mathbf{Y}$. This is an element of the set $\mathcal{M}$ of collections of points in $[0,\infty) \times (0,\infty)$ such that there are only finitely many points in compact rectangles of the form $[0,a] \times [\delta,1/\delta]$, $a > 0, \delta > 0$. We endow $\mathcal{M}$ with the topology of vague convergence of counting measures on $[0,\infty) \times (0,\infty)$.

Observe we may also construct the process $S(t)$ in terms of an i.i.d.\ sequence of $\Expo{1}$ random variables, say $\{E_\gamma' : \gamma \in \Gamma\}$, this time independent of $\mathbf Y$: then we may set
    \[ S(t) = \sum_{\gamma \in \Gamma} Y_\gamma \IndEvent{E_\gamma' \le Y_\gamma t}. \]
This is useful for coupling processes $S(t)$ across various choices of $\mathbf Y$.

 A convenient formulation of the result we shall use is given in Proposition 17 of Aldous and Limi\'c~\cite{aldouslimic}, which we reproduce below.

\begin{proposition_restate} \label{prop: aldouslimic}
Suppose that $\mathbf{Y}^{(n)} \in \ell_2^+(\Gamma^{(n)})$ and that $\Xi^{(n)}$ is the associated size-biased point process, for each $1 \le n < \infty$. Suppose further that
\[
\Xi^{(n)} \convdist \Xi^{(\infty)},
\]
where
\begin{enumerate}
\item $\sup\{s: (s,y) \in \Xi^{(\infty)} \text{ for some $y$}\} = \infty$ a.s.
\item if $(s,y) \in \Xi^{(\infty)}$ then $\sum_{\substack{(s',y') \in \Xi^{(\infty)}\\s' < s}} y' = s$ a.s.
\item $\max\{y: (s,y) \in \Xi^{(\infty)} \text{ for some } s > a\} \convprob 0$ as $a \to \infty$.
\end{enumerate}
Write $\mathbf{Y}^{\infty}$ for the projection of $\Xi^{(\infty)}$ onto its second co-ordinate. Then 
\[
(\mathbf{Y}^{(n)})^{\downarrow} \convdist (\mathbf{Y}^{(\infty)})^{\downarrow}
\]
in $\ell_2^{\downarrow}$.
\end{proposition_restate}

\begin{proof}[Proof of \autoref{prop: ell2conv}]
Let $\mathbf{Y}^{(n)} = \{a_n^{-1} D_i : 1 \le i \le n, D_i > 0\}$. By construction, the process $(S^{(n)}(t), t \ge 0)$ is a random time-change of $(a_n^{-1} \sum_{i=1}^{\fl{m_n t}} \widehat{D}_i, t \ge 0)$ and so, in particular, we have
\[
\Xi^{(n)} = \left\{ \left(\frac{1}{a_n} \sum_{i=1}^{j-1} \widehat{D}_i, \frac{1}{a_n} \widehat{D}_j \right): j \ge 1 \right\}.
\]
By \autoref{cor: cumuldeg}, we have 
\[
\left(\frac{1}{a_n} \sum_{i=1}^{\fl{m_n t}} \widehat{D}_i, t \ge 0\right) \convdist \widetilde{\sigma};
\]
we will use this to show that
\[
\Xi^{(n)} \convdist \Xi^{(\infty)} = \{(\widetilde{\sigma}_{t-}, \Delta\widetilde{\sigma}_t): \Delta \widetilde{\sigma}_t > 0, t \ge 0\}.
\]
To prove this, by a strengthening of Theorem 16.16 of \cite{kallenbergfmp} it is enough to prove convergence in distribution of $\Xi^{(n)}(U)$ to $\Xi^{(\infty)}(U)$ for any open, relatively compact set $U$ such that $\Xi^{(\infty)}(\partial U) = 0$ a.s. Proving this statement is routine but rather long, so we give only a brief sketch here: fix a set $U$ and let us work on a common probability space such that the convergence $(\frac{1}{a_n}\sum_{i=1}^{\fl{tm_n}}\widehat D_i, t \geq 0) \to \widetilde \sigma$ in $D([0, \infty))$ is almost sure. We now work on the (probability 1) event that this convergence does occur and that the limiting point measure puts no mass on the boundary.

Now suppose that $\Xi^{(\infty)}$ has precisely $K$ atoms $(s_1, y_1) \dots, (s_K, y_K) \in U$. That is, the process $\widetilde \sigma$ has jumps whose values go from $s_i$ to $s_i + y_i$ for each $i$, and that these are the only jumps for which $(\widetilde \sigma_{t-}, \Delta\widetilde \sigma_t) \in U$. Using the convergence of c\`adl\`ag processes we note that, for each $n$, we can identify $K$ jumps of the $n$th discrete process, say from $s_i^{(n)}$ to $s_i^{(n)} + y_i^{(n)}$, such that $s_i^{(n)} \to s_i$ and $y_i^{(n)} \to y_i$ for all $i$. (Note these jumps need not be distinct, but for sufficiently large $n$ they will be.) As $U$ is an open set, we conclude that for sufficiently large $n$, all points $(s_i^{(n)}, y_i^{(n)})$ lie in $U$ and thus $\Xi^{(n)}$ has at least $K$ atoms in $U$.

It remains to show that for all but finitely many $n$ these are the only atoms of $\Xi^{(n)}$ in $U$. Suppose not, then we can find a subsequence in $n$ along which there is an extra atom $(u^{(n)}, v^{(n)}) \in U$. Using relative compactness of $U$ it is straightforward to derive a contradiction: either we find another atom in the limiting measure in $U$ or on the boundary, or we find that $\Xi^{(n)}$ has a pair of atoms that both converge to a common point (which violates either right continuity or existence of left limits in $\widetilde \sigma$).

We now just need to check Conditions 1--3 of \autoref{prop: aldouslimic}. Since $\widetilde{\sigma}_t \to \infty$ as $t \to \infty$, 1 is clear. Condition 2 is a consequence of the fact that $\widetilde{\sigma}$ is a right-continuous increasing process which evolves only by jumps. Condition 3 follows immediately from the fact that the jumps of $\widetilde{\sigma}$ are a.s.\ square-summable, which was proved in \autoref{prop: quadraticvarfinite}.
\end{proof}

We will need one more proposition formalising an intuitive result about convergence of the processes $S^{(n)}$. The proof is deferred to the Appendix.

\begin{restatable}{proposition_restate}{sbppconvergence}\label{prop: sbppconvergence}
Let $\mathbf{X}^{(n)}, \mathbf{X}$ be random elements of $\ell_2^\downarrow = \ell_2^\downarrow(\mathbb N)$, not necessarily defined on the same probability space. Define, for $t \geq 0$,
    \[ S^{(n)}(t) = \sum_{i=1}^\infty X_i^{(n)}\IndEvent{E_i \leq X_i^{(n)}t} \quad \text{ and } \quad S(t) = \sum_{i=1}^\infty X_i \IndEvent{E_i \leq X_i t}, \]
where $E_i \sim \Expo{1}$ are i.i.d. and independent of $\mathbf{X}^{(n)}$ and $\mathbf{X}$. If $\mathbf{X}^{(n)} \convdist \mathbf{X}$, then we have $(S^{(n)}(t), t \ge 0) \convdist (S(t), t \ge 0)$ in the Skorokhod space $D([0, \infty))$. 
\end{restatable}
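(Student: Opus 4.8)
The plan is to combine a truncation to finitely many coordinates with the principle of accompanying laws (Theorem~3.2 of \cite{Billingsley}). Fix a finite horizon $T>0$; I will first establish $(S^{(n)}(t),0\le t\le T)\convdist(S(t),0\le t\le T)$ in $D([0,T])$ and then deduce convergence in $D([0,\infty))$, which is routine once one has $D([0,T])$-convergence for every $T$ (each fixed $T$ is a.s.\ a continuity point of the increasing process $S$, whose jumps occur at the a.s.-distinct times $E_i/X_i$ with $X_i>0$). For $K\ge1$ write $S^{(n)}=S^{(n),\le K}+S^{(n),>K}$ with $S^{(n),\le K}(t)=\sum_{i=1}^K X_i^{(n)}\IndEvent{E_i\le X_i^{(n)}t}$, and define $S^{\le K},S^{>K}$ analogously from $\mathbf X$. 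By Theorem~3.2 of \cite{Billingsley} it then suffices to check: (i) for each fixed $K$, $S^{(n),\le K}\convdist S^{\le K}$ in $D([0,T])$ as $n\to\infty$; (ii) $S^{\le K}\convdist S$ in $D([0,T])$ as $K\to\infty$; and (iii) for every $\varepsilon>0$, $\lim_{K\to\infty}\limsup_{n\to\infty}\Prob{d_{[0,T]}(S^{(n)},S^{(n),\le K})>\varepsilon}=0$, where $d_{[0,T]}$ denotes the Skorokhod metric on $D([0,T])$.

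Claim (i) follows from the continuous mapping theorem: the coordinate projection $\ell_2^{\downarrow}\to\R^K$ is continuous, so $(X_1^{(n)},\dots,X_K^{(n)})\convdist(X_1,\dots,X_K)$, and hence, using independence of the $E_i$, the pairs $\big((X_i^{(n)})_{i\le K},(E_i)_{i\le K}\big)$ converge in distribution; the map $\big((x_i)_{i\le K},(e_i)_{i\le K}\big)\mapsto\big(t\mapsto\sum_{i=1}^K x_i\IndEvent{e_i\le x_it}\big)$ into $D([0,T])$ is continuous except on the set where two jump times $e_i/x_i$, $e_j/x_j$ with $x_i,x_j>0$ coincide, and conditionally on $\mathbf X$ the variables $\{E_i/X_i:X_i>0\}$ are independent and absolutely continuous, so the limiting configuration avoids this exceptional set almost surely. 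Claim (ii) holds because $\|S-S^{\le K}\|_{[0,T],\infty}=S^{>K}(T)=\sum_{i>K}X_i\IndEvent{E_i\le X_iT}$, and since $\E{S(T)\mid\mathbf X}=\sum_{i\ge1}X_i(1-e^{-X_iT})\le T\|\mathbf X\|_2^2<\infty$ almost surely, we have $S(T)<\infty$ a.s., whence $S^{>K}(T)\to0$ a.s.\ as $K\to\infty$, i.e.\ $S^{\le K}\to S$ a.s.\ uniformly on $[0,T]$ (which implies convergence in $D([0,T])$).

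For claim (iii), note $S^{(n),>K}$ is non-negative and non-decreasing, so $d_{[0,T]}(S^{(n)},S^{(n),\le K})\le\|S^{(n),>K}\|_{[0,T],\infty}=S^{(n),>K}(T)$. Conditioning on $\mathbf X^{(n)}$ and using $1-e^{-x}\le x$,
\[
\E{S^{(n),>K}(T)\mid\mathbf X^{(n)}}=\sum_{i>K}X_i^{(n)}\big(1-e^{-X_i^{(n)}T}\big)\le T R_K^{(n)},\qquad R_K^{(n)}:=\sum_{i>K}\big(X_i^{(n)}\big)^2,
\]
so Markov's inequality gives $\Prob{S^{(n),>K}(T)>\varepsilon}\le\E{1\wedge(T R_K^{(n)}/\varepsilon)}$. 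Now $R_K^{(n)}=\|\mathbf X^{(n)}\|_2^2-\sum_{i\le K}(X_i^{(n)})^2$ is a continuous functional of $\mathbf X^{(n)}\in\ell_2^{\downarrow}$, so $R_K^{(n)}\convdist R_K:=\sum_{i>K}X_i^2$ as $n\to\infty$; since $x\mapsto1\wedge(Tx/\varepsilon)$ is bounded and continuous, $\limsup_n\Prob{S^{(n),>K}(T)>\varepsilon}\le\E{1\wedge(T R_K/\varepsilon)}$, and this tends to $0$ as $K\to\infty$ by monotone convergence, since $R_K\downarrow0$ almost surely (tail of the a.s.-convergent series $\sum_i X_i^2$). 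Combining (i)--(iii) yields $S^{(n)}\convdist S$ in $D([0,T])$, and then in $D([0,\infty))$ as explained.

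I expect the delicate point to be step (iii): although the coordinates of $\mathbf X^{(n)}$ need not be summable --- so the series defining $S^{(n)}$ genuinely involves infinitely many jumps --- the tail is controlled in conditional mean by $\sum_{i>K}(X_i^{(n)})^2$ because each small jump of size $X_i^{(n)}$ becomes active by time $T$ only with probability $1-e^{-X_i^{(n)}T}\le X_i^{(n)}T$. One must avoid taking an unconditional expectation of $\sum_{i>K}(X_i^{(n)})^2$, which need not be uniformly integrable; instead the truncation error is passed through as a bounded continuous functional of $\mathbf X^{(n)}$ and estimated using only the assumed convergence in distribution. A second, more routine, point is verifying the almost sure continuity of the finite-sum jump map at the limit in (i), i.e.\ that the limiting jump times are almost surely all distinct.
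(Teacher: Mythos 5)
Your proof is correct, but it follows a genuinely different route from the paper's. The paper first uses Skorokhod representation to reduce to deterministic sequences $x^{(n)}\to x$ in $\ell_2$, then compares $S^{(n)}$ with an intermediate process $R^{(n)}(t)=\sum_i x_i\IndEvent{E_i\le x_i^{(n)}t}$ (limiting jump sizes, $n$-dependent jump times) via a first-moment bound $T\|x^{(n)}-x\|_2\|x^{(n)}\|_2$ in the uniform norm, and finally compares $R^{(n)}$ with $S$ by truncating at a level $N$ and building an explicit piecewise-linear time change matching the finitely many large jump times; the random case is then recovered by conditioning and bounded convergence. You instead work directly at the level of laws: truncation at level $K$ plus the principle of accompanying laws, with the finite-$K$ part handled by the continuous mapping theorem (a.s.\ distinct jump times; strictly speaking the exceptional set should also include configurations with a jump exactly at the horizon $T$, which likewise has probability zero) and the tail handled uniformly in $n$ by conditional Markov together with the observation that $1\wedge(T R_K^{(n)}/\varepsilon)$ is a bounded continuous functional of $\mathbf X^{(n)}\in\ell_2^{\downarrow}$, so the bound passes through weak convergence without any coupling --- this neatly sidesteps the lack of uniform integrability of $\sum_{i>K}(X_i^{(n)})^2$, which is exactly the delicate point. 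Your argument is arguably cleaner and avoids both the Skorokhod representation and the hands-on time-change construction; what the paper's coupled proof buys in exchange is a convergence-in-probability statement under an almost-sure coupling of the jump-size sequences, which is the form actually invoked later in the proof of Proposition~\ref{prop: sigmatildeidentified}, whereas your argument establishes precisely the distributional statement of the proposition and would need the conditional (deterministic-sequence) variant to recover that refinement.
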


\begin{proposition_restate} \label{prop: sigmatildeidentified}
    Let $\Theta = (\Theta_1, \Theta_2, \dots)$ be the ordered jump sizes of a normalised $\alpha$-stable excursion. Then
    \[
    (\widetilde{\sigma}_t)_{t \ge 0} \eqdist \left( \sum_{i \ge 1} \Theta_i \IndEvent{E_i \le \Theta_i t} \right)_{t \ge 0},
    \]
where $E_1, E_2, \ldots$ are i.i.d.\ $\Expo{1}$ and independent of $\Theta$.
\end{proposition_restate}

\begin{proof}
    Define a new process $\rho$ as follows: first, generate a sequence $(\Delta_i)_{i \geq 1} \in \ell_2^\downarrow$ distributed as the law of the jumps of $\widetilde \sigma$ arranged into decreasing order. Independently of these, let $E_i \sim \Expo{1}$ be i.i.d.\ and set
        \[ \rho_t = \sum_{i=1}^\infty \Delta_i \IndEvent{E_i \leq \Delta_i t}. \]
    The main step is to show that $\rho$ and $\widetilde \sigma$ have the same law.
    
    In what follows, write $\widetilde \sigma^n(t)=\frac{1}{a_n}\sum_{i=1}^{\fl{tm_n}}\widehat D_i$, and note that $\widetilde \sigma^n \convdist \widetilde \sigma$. (This is not exactly the analogue of $\sigma^n$ defined in \autoref{prop: sizebiasedscalinglimit}, as we have not subtracted 1 from each term in the sum, but as $m_n \ll a_n$ this difference vanishes in the limit and the convergence in distribution still holds.) We will show that $\widetilde \sigma^n$ also converges in distribution to $\rho$ in the Skorokhod metric $D([0, T])$, where $T$ is an arbitrary time horizon. 

    By \autoref{prop: ell2conv} and Skorokhod's representation theorem we can construct $(\Delta_i^n)$ on the same space as $\rho$ such that $a_n^{-1}(\Delta_1^n, \Delta_2^n, \dots) \to (\Delta_1, \Delta_2, \dots)$ almost surely in $\ell^2$. By \autoref{prop: sbppconvergence} the processes
        \[ \rho^n(t) = \frac{1}{a_n}\sum_{i=1}^n \Delta_i^n \IndEvent{E_i \leq \frac{\Delta_i^n}{a_n}t}\]
    converge to $\rho$ in probability on the space $D([0, T])$. Moreover, the jump chain of $\rho^n$ has the same distribution as that of $\widetilde \sigma^n$. Let $0 = \tau_0^n < \tau_1^n < \tau_2^n < \dots < \tau_n^n$ be the jump times of $\rho^n$, and let $(\widehat \Delta^n)$ be the reordering of $(\Delta^n)$ induced by the jumps of $\rho^n$. We see that $\widetilde \sigma^n(t) \overset d= \rho^n(\lambda^n(t))$, where $\lambda^n$ is the minimal piecewise linear function such that $\lambda^n(\frac{i}{m_n}) = \tau_i^n$ and (say) $\lambda^n$ is constant beyond time $n/m_n$. As such, we can explicitly construct $\widetilde \sigma^n(t) := \rho^n(\lambda^n(t))$ on our common probability space. We will show that, with this construction, $\widetilde \sigma^n(t) \convprob \rho$ with respect to the Skorokhod metric.

    We aim to bound the Skorokhod distance $d_D = d_{D([0, T])}$ between $\widetilde \sigma^n$ and $\rho^n$: let $\widetilde \lambda^n$ be the minimal piecewise linear time change such that
        \[ \widetilde \lambda^n\left(\frac{i}{m_n}\right) = \tau_i^n \wedge T \quad \text{ for } i \leq \fl{Tm_n} \]
    and $\widetilde \lambda^n(T) = T$. We will show that
        \[ \sup_{t \in [0, T]}|\widetilde \lambda^n(t) - t| \convprob 0 \quad \text{ and } \quad \sup_{t \in [0, T]}|\widetilde \sigma^n - \rho^n \circ \widetilde \lambda^n| \convprob 0, \]
    from which we can deduce that $d_D(\widetilde \sigma^n, \rho^n) \convprob 0$. For the first limit, we note that we can bound the supremum by looking only at the endpoints of each piece. Moreover, for all $i \leq \fl{Tm_n}$ note that $\frac{i}{m_n}$ is closer to $\tau_i^n \wedge T$ than to $\tau_i^n$. Hence we can bound
        \[ \sup_{t \in [0, T]}|\widetilde \lambda^n(t) - t| \leq \sup_{i \leq \fl{Tm_n}} \left|\tau_i^n - \frac{i}{m_n}\right|. \]
    By identifying the conditional law of $\tau_i^n - \tau_{i-1}^n$, we see that the process defined by
        \[ Y_i = \tau_i^n - \frac{1}{m_n}\sum_{j=1}^{i}\left(1 - \frac 1n - \frac 1n \sum_{k=1}^{j-1} \widehat \Delta_k^n\right)^{-1}\]
    is a mean 0 martingale, both in its natural filtration $(\mathcal F_i)$ and the larger filtration $\widetilde{\mathcal F_i} = \sigma(\mathcal F_i, (\widehat \Delta_k^n)_{k \geq 1})$. We may bound
    \begin{multline} \label{eq: martingalebound}
    \Prob{\sup_i \left|\tau_i^n - \frac{i}{m_n}\right| > \varepsilon}\\
    \leq \Prob{\sup_{i}|Y_i| > \frac12\varepsilon} + \Prob{\sup_{i} \frac {1}{m_n}\left|\sum_{j=1}^i\left(1 - \frac 1n - \frac 1n \sum_{k=1}^{j-1} \widehat \Delta_k^n\right)^{-1} - i\right| > \frac12\varepsilon}.
    \end{multline}
    We bound the first term on the right-hand side by Doob's inequality and a technical condition as follows. Let $A_n$ be the event that $\sum_{k=1}^{\fl{Tm_n}}\widehat \Delta_k^n \leq n/2$. We can calculate
        \[ \Prob{\left\{\sup_{i} |Y_i| > \frac 12 \varepsilon\right\} \cap A_n} \leq \frac{16}{\varepsilon^2}\E{Y_{\fl{Tm_n}}^2\IndEvent{A_n}} \leq \frac{16}{\varepsilon^2}\sum_{i=1}^{\fl{Tm_n}}\E{(Y_i - Y_{i-1})^2\IndEvent{A_n}}, \]
    using that $Y$ is an $(\widetilde {\mathcal F_i})$-martingale to establish the first inequality. Now for each $i$ we can estimate
        \[ \E{(Y_i - Y_{i-1})^2\IndEvent{A_n}} \leq \E{(Y_i - Y_{i-1})^2\IndEvent{\sum_{k=1}^{i-1}\widehat \Delta_k^n \leq n/2}}. \]
    Conditionally given $(\widehat \Delta_k^n)_{k\leq i - 1}$, the increment $Y_i - Y_{i-1}$ has the law of a centered exponential random variable of rate $\beta_i = m_n\left(1 - \frac1n - \frac1n \sum_{k=1}^{i-1}\widehat \Delta_k^n\right)$. The conditional second moment of this is $\beta_i^{-2}$, and so by the tower law
    \begin{align*}
        \E{(Y_i - Y_{i-1})^2\IndEvent{\sum_{k=1}^{i-1}\widehat \Delta_k^n \leq n/2}} &= m_n^{-2}\E{\left(1 - \frac1n - \frac1n \sum_{k=1}^{i-1}\widehat \Delta_k^n\right)^{-2}\!\IndEvent{\sum_{k=1}^{i-1}\widehat \Delta_k^n \leq n/2}}\\
        &\leq m_n^{-2}\E{\left(1 - \frac 1n - \frac 1n \times \frac n2\right)^{-2}\IndEvent{\sum_{k=1}^{i-1}\widehat \Delta_k^n \leq n/2}}\\
        &\leq m_n^{-2}\left(\frac 12 - \frac 1n\right)^{-2} \leq 5m_n^{-2} \text{ for sufficiently large $n$. }
    \end{align*}
    Putting this into our initial estimate, we obtain the bound
        \[ \Prob{\left\{\sup_{i} |Y_i| > \frac 12 \varepsilon\right\} \cap A_n} \leq \frac 1{m_n} \times \frac{80T}{\varepsilon^2} \to 0. \]
    Finally we note that $\Prob{A_n} \to 1$, and thus $\Prob{\sup_i |Y_i| > \frac 12 \varepsilon} \to 0$ too.
        
    For the second term on the right-hand side of (\ref{eq: martingalebound}), we note that the term in the absolute value function is always non-negative, and the supremum is always attained at $i = \fl{Tm_n}$. Thus the probability is equal to
        \begin{equation} \label{eq: errorterm}
        \Prob{\sum_{j=1}^{\fl{Tm_n}} \left(\left(1 - \frac 1n - \frac 1n\sum_{k=1}^{j-1}\widehat \Delta_k^n\right)^{-1}-1\right) > \frac{m_n \varepsilon}{2}}. 
        \end{equation}
    On the event $A_n$ (introduced above) the estimate
        \[ \left(1 - \frac 1n - \frac 1n\sum_{k=1}^{j-1}\widehat \Delta_k^n\right)^{-1} \leq 1 + C\left(\frac 1n + \frac 1n \sum_{k=1}^{j-1}\widehat \Delta_k^n\right) \]
    holds for every $j$, where $C$ is some absolute constant. Working on this event yields that (\ref{eq: errorterm}) is bounded above by 
    \begin{multline*}
    \Prob{C\sum_{j=1}^{\fl{Tm_n}}\left(\frac 1n + \frac 1n \sum_{k=1}^{j-1}\widehat \Delta_k^n\right) > \frac{m_n \varepsilon}{2}} + \Prob{A_n^c}\\
        \leq \Prob{\frac1n \sum_{j=1}^{\fl{Tm_n}}\sum_{k=1}^{j-1}\widehat \Delta_k^n \geq m_n\left(\frac{\varepsilon}{2C} - \frac{T}{n}\right)} + o(1).
    \end{multline*}
    Since $\frac{1}{n}\sum_{j=1}^{\fl{Tm_n}}\sum_{k=1}^{j-1}\widehat \Delta_k^n$ converges in distribution to $\int_0^T \widetilde \sigma_s\,ds$, while $m_n\left(\frac{\varepsilon}{2C} - \frac{T}{n}\right) \to \infty$, this probability tends to zero. This establishes $\sup_{t \in [0, T]}|\widetilde \lambda^n(t) - t| \convprob 0$.

    It remains to bound the distance between $\rho^n(\widetilde \lambda^n(t))$ and $\widetilde \sigma^n(t) = \rho^n(\lambda^n(t))$. Observe that this distance is maximised at $t = T$ (we prove this by splitting into the cases $\tau_{\fl{Tm_n}}^n \leq T$ and $\tau_{\fl{Tm_n}}^n > T$), where we have
        \[ \widetilde \sigma^n(T) = \frac{1}{m_n}\sum_{i=1}^{\fl{Tm_n}} \widehat \Delta_i^n \quad \text{ and } \rho^n(\widetilde \lambda^n(T)) = \frac{1}{m_n}\sum_{i=1}^{I} \widehat \Delta_i^n, \]
    where $I = \sup\{i : \tau_i^n \leq T\}$. For any $\delta > 0$ the event $B_\delta^n = \{ I \in ((T-\delta)m_n, (T+\delta)m_n) \}$ has probability tending to 1, and on this event we have
        \[ |\rho^n(\widetilde \lambda^n(T)) - \widetilde \sigma^n(T)| \leq \frac{1}{m_n}\sum_{i = \fl{(T-\delta)m_n}}^{\fl{(T+\delta)m_n}} \widehat \Delta_i^n. \]
    As $n \to \infty$ the right-hand side converges in distribution to $\widetilde \sigma_{T + \delta} - \widetilde \sigma_{T - \delta}$. Thus, by picking $\delta$ suitably for each $\varepsilon$, we see that $\Prob{|\rho^n(\widetilde \lambda^n(T)) - \widetilde \sigma^n(T)| > \varepsilon} \to 0$. It follows that $d_D(\widetilde \sigma^n, \rho^n) \convprob 0$. We also know $d_D(\rho^n, \rho) \to 0$ almost surely, hence in probability and so $\widetilde \sigma^n \convprob \rho$ in the Skorokhod metric.

    We also know $\widetilde \sigma^n \convdist \widetilde \sigma$, so we identify $\widetilde \sigma_t \overset d= \sum_{i=1}^\infty \Delta_i \IndEvent{E_i \leq \Delta_i t}$. It remains only to identify $(\Delta_i)$. By \autoref{prop: ell2conv} we know that for any $k$ we have $a_n^{-1}(\Delta_1^n, \dots, \Delta_k^n) \convdist (\Delta_1, \dots, \Delta_k)$, so we will show the same sequence converges to $(\Theta_1, \dots, \Theta_k)$. Let $L^n : \{0, 1, \dots, n\} \to \mathbb Z$ be the \L{ukasiewicz} path of the discrete tree $T^n$ of size $n$, and construct a c\`adl\`ag function $e^n \in D([0, 1])$ by taking
        \[ e^n(t) = a_n^{-1}L^n(\fl{nt}). \]
    Then we have $e^n \convdist \mathbbm{e}$, where $\mathbbm{e}$ is a normalised $\alpha$-stable excursion (this result is implicit in \cite{duquesnealphalimit} or \cite{igorduquesneproof}). Now consider a map $\mathfrak J : D([0, 1]) \to \R^k$ given as follows: for any $f \in D([0, 1])$, we identify the $k$ largest upward jumps of $f$ and arrange them into decreasing order (appending zeros if there are fewer than $k$ such jumps). This gives a well-defined continuous map, and applying it to the above convergence gives
        \[ \mathfrak J(e^n) \convdist \mathfrak J(\mathbbm{e}). \]
    We identify $\mathfrak J(e^n) \overset d= a_n^{-1}((\Delta_1^n - 1)^+, \dots, (\Delta_k^n - 1)^+)$ and $\mathfrak J(\mathbbm{e}) \overset d= (\Theta_1, \dots, \Theta_k)$. It follows that $a_n^{-1}(\Delta_1^n, \dots, \Delta_k^n) \convdist (\Theta_1, \dots, \Theta_k)$ and so $(\Delta_1, \dots, \Delta_k) \overset d= (\Theta_1, \dots, \Theta_k)$ as distributions on $\R^k$. As $k$ was arbitrary, the result follows. 
\end{proof}

\appendix
\section{Appendix: omitted proofs}
\firstcuttimelaw*
\begin{proof}
Recall that if $M_1 \sim \mathrm{ML}(1-1/\alpha,1-1/\alpha)$ then 
    \[
    \E{M_1^k} = \frac{\Gamma(k+1) \Gamma(1-\tfrac{1}{\alpha})}{\Gamma((k+1)(1-\tfrac{1}{\alpha}))}
    \]
for $k \ge 1$ and (by Carleman's condition) the distribution is determined by its moments. So it is sufficient to show that $\E{Y_1^k} = \alpha^{-k} \E{M_1^k}$.

    In what follows, denote by $\Psi_\alpha$ the characteristic exponent of $L$ (the spectrally positive $\alpha$-stable Lévy process such that $L_1$ has density $p$) and by $\Psi_{\alpha-1}$ the characteristic exponent of $\sigma$. We will prove that the result holds for all $k \in (0, \infty)$ (including non-integral $k$). We begin by restricting to the case that $(k+1)\left(1 - \frac1\alpha\right) \not \in \mathbb Z$.

    By the measure change formula we have the identity
        \[ \Prob{Y_1 \geq t} = \E{e^{-\int_0^t \widetilde \sigma_s\,ds}} = \E{\frac{p(-\sigma_t)}{p(0)}}. \]
    For any $k > 0$ we can write
        \[ \E{Y_1^k} = \int_0^\infty kt^{k-1}\Prob{Y_1 \geq t}\,dt = \lim_{\varepsilon \downarrow 0} \int_0^\infty kt^{k-1}e^{-\varepsilon t} \Prob{Y_1 \geq t}\,dt, \]
    using monotone convergence to prove the limit on the right. For each fixed $\varepsilon > 0$, we have
\begin{align*}
    \int_0^\infty kt^{k-1}e^{-\varepsilon t}\Prob{Y_1 \geq t}\,dt &= \frac{1}{p(0)}\int_0^\infty kt^{k-1}e^{-\varepsilon t}\E{p(-\sigma_t)}\,dt\\
    &= \frac{1}{2\pi p(0)}\int_0^\infty kt^{k-1}e^{-\varepsilon t}\E{\int_\R e^{i\lambda \sigma_t}e^{-\Psi_\alpha(\lambda)}\,d\lambda}\,dt\\
    &= \frac{1}{2\pi p(0)}\int_0^\infty kt^{k-1}e^{-\varepsilon t}\left(\int_\R \E{e^{i\lambda \sigma_t}}e^{-\Psi_\alpha(\lambda)}\,d\lambda\right)\,dt\\
    &= \frac{1}{2\pi p(0)}\int_0^\infty kt^{k-1}e^{-\varepsilon t}\left(\int_\R e^{-t\Psi_{\alpha-1}(\lambda)}e^{-\Psi_\alpha(\lambda)}\,d\lambda\right)\,dt\\
    &= \frac{1}{2\pi p(0)}\int_\R e^{-\Psi_\alpha(\lambda)}\left(\int_0^\infty kt^{k-1}e^{-(\varepsilon + \Psi_{\alpha-1}(\lambda))t}\,dt\right)\,d\lambda\\
    &= \frac{1}{2\pi p(0)}\int_\R e^{-\Psi_\alpha(\lambda)} \cdot \left(\varepsilon + \Psi_{\alpha-1}(\lambda)\right)^{-k}k\Gamma(k)\,d\lambda\\
    &= \frac{\Gamma(k+1)}{2\pi p(0)}\int_\R \left(\varepsilon + \Psi_{\alpha-1}(\lambda)\right)^{-k}e^{-\Psi_\alpha(\lambda)}\,d\lambda.
\end{align*}
    Now if $j \geq 0$ is an integer such that $\alpha j - (\alpha - 1)k < -1$, we can show that (still for fixed $\varepsilon > 0$)
        \[ \int_\R (\varepsilon + \Psi_{\alpha-1}(\lambda))^k \Psi_\alpha(\lambda)^j\,d\lambda = 0. \]
    Indeed, by contour integration, for every $R > 0$ we have 
        \[ \int_\R (\varepsilon + \Psi_{\alpha - 1}(\lambda))^k \Psi_\alpha(\lambda)^j\,d\lambda = \int_\R (\varepsilon + \Psi_{\alpha-1}(\lambda + iR))^k \Psi_\alpha(\lambda + iR)^j\,d\lambda \]
    but the modulus of the right-hand side is at most a constant multiple of $\int_\R (R^2 + x^2)^{\frac{\alpha j - (\alpha - 1)k}{2}}\,dx$, which tends to 0 as $R \to \infty$.

    This condition on $j$ is met for $j = 0, 1, \dots, l-1$, where $l = \fl{(k+1)(1-\frac1\alpha)}$. It follows that
       \[ \E{Y_1^k} = \frac{\Gamma(k+1)}{2\pi p(0)}\lim_{\varepsilon \downarrow 0} \int_\R (\varepsilon + \Psi_{\alpha-1}(\lambda))^{-k}\left(e^{-\Psi_\alpha(\lambda)} - 1 + \Psi_\alpha(\lambda) - \dots - \frac{(-\Psi_\alpha(\lambda))^{l-1}}{(l-1)!}\right)\,d\lambda. \]
    We may now apply dominated convergence to obtain
        \[ \E{Y_1^k} = \frac{\Gamma(k+1)}{2\pi p(0)}\int_\R \Psi_{\alpha-1}(\lambda)^{-k}\left(e^{-\Psi_\alpha(\lambda)} - 1 + \Psi_\alpha(\lambda) - \dots - \frac{(-\Psi_\alpha(\lambda))^{l-1}}{(l-1)!}\right)\,d\lambda.\]
    Now we have the relations
        \[ \Psi_{\alpha - 1}(\lambda) = \alpha(-\Psi_\alpha(\lambda))^{1 - \frac1\alpha} \quad \text{ and } \quad -i\Psi_{\alpha - 1}(\lambda) = \frac{d}{d\lambda}\left(-\Psi_\alpha(\lambda)\right), \]
    which are valid for all $\lambda \in \R$. (Some care is required taking fractional powers of complex numbers. For example, given $a, b \in \R$ and $z \in \mathbb C$, we can only say $(z^a)^b = z^{ab}$ if $a \arg(z) \in (-\frac \pi2, \frac \pi2)$ or $b$ is an integer. Fortunately, the results above turn out to be true whenever $\lambda$ is real). Using these, we get
    \begin{align*}
    \E{Y_1^k} &= \frac{i\Gamma(k+1)}{2\pi p(0) \alpha^{k+1}}\int_\R (-\Psi_\alpha)^{-(k+1)(1-\frac 1\alpha)}\left(e^{-\Psi_\alpha} - 1 + \dots - \frac{(-\Psi_\alpha)^{l-1}}{(l-1)!}\right)\frac{d}{d\lambda}(-\Psi_\alpha)\,d\lambda\\
        &= \frac{i\Gamma(k+1)}{2\pi p(0)\alpha^{k+1}}\int_\gamma z^{-(k+1)(1-\frac1\alpha)}\left(e^z - 1 - z - \dots - \frac{z^{l-1}}{(l-1)!}\right)\,dz,
    \end{align*}
    where $\gamma : \R \to \mathbb C$ is the curve given by $\gamma(\lambda) = -\Psi_\alpha(\lambda) = (-i\lambda)^\alpha$.

    Now let $G(z) = z^{-(k+1)(1-\frac1\alpha)}\left(e^z - 1 - \dots - \frac{z^{l-1}}{(l-1)!}\right)$. We evaluate $\int_\gamma G(z)\,dz$ via contour integration as follows. Note that for each $R > 0$, $\gamma(-R)$ has negative real part and positive imaginary part, while $\gamma(R) = \overline{\gamma(-R)}$. We can replace $\gamma|_{[-R, R]}$ by the union of the following four pieces:
    \begin{itemize}
        \item The vertical line from $\gamma(-R)$ down to $\Re(\gamma(-R))$;
        \item The horizontal line from $\Re(\gamma(-R))$ to 0 (where we take $\arg(z) = \pi$ to deal with the branch cut for $z^{-(k+1)(1-\frac1\alpha)})$;
        \item The horizontal line from 0 to $\Re(\gamma(-R)) = \Re(\gamma(R))$ (where we take $\arg(z) = -\pi$ instead);
        \item The vertical line from $\Re(\gamma(R))$ down to $\gamma(R)$.
    \end{itemize}
    As $R \to 0$, the integrals along the vertical lines go to zero. Meanwhile, by standard integral expressions for $\Gamma(z)$ when $\Re(z) < 0$, we see that the two horizontal integrals each converge to
        \[ \pm e^{\mp i\pi (k+1)(1-\frac1\alpha)}\Gamma\left(1 - (k+1)\left(1 - \frac1\alpha\right)\right). \]
    In total, we see that
\begin{align*}
    \int_\gamma G(z)\,dz &= \left(e^{-i\pi(k+1)(1-\frac1\alpha)}-e^{i\pi(k+1)(1-\frac1\alpha)}\right)\Gamma\left(1-(k+1)\left(1-\frac1\alpha\right)\right)\\
    &= -2i\sin\left(\pi(k+1)\left(1-\frac1\alpha\right)\right)\Gamma\left(1-(k+1)\left(1-\frac1\alpha\right)\right)\\
    &= \frac{-2i\pi}{\Gamma\left((k+1)\left(1 - \frac1\alpha\right)\right)},
\end{align*}   
    using the reflection formula for the Gamma function to obtain the last line. Finally we have
\begin{equation}
    \E{Y_1^k} = \frac{i\Gamma(k+1)}{2\pi p(0)\alpha^{k+1}} \times \frac{-2i\pi}{\Gamma\left((k+1)\left(1-\frac1\alpha\right)\right)}= \alpha^{-k}\frac{\Gamma(k+1)}{\Gamma\left((k+1)\left(1-\frac1\alpha\right)\right)} \times \frac{1}{\alpha p(0)}. \label{eq: gammas}
\end{equation}
Recall that this holds whenever $(k+1)(1-\frac1\alpha)$ is not an integer --- the set of such $k$ is dense in $[0, \infty)$. The right-hand side of (\ref{eq: gammas}) makes sense for all $k \geq 0$ and is continuous in $k$. By dominated convergence the map $k \mapsto \E{Y_1^k}$ is continuous for $k \in [0, \infty)$. It follows that the left- and right-hand sides of (\ref{eq: gammas}) are equal for all $k \geq 0$. Using that $\E{Y_1^0} = 1$, we see that $\frac{1}{\alpha p(0)} = \Gamma(1 - \frac1\alpha)$, completing the proof.
\end{proof}

\recipemg*

\begin{proof}
    By condition (ii) we know $e^{-\Psi_L(\lambda)}$ is integrable, and so $L_1$ has a density by Fourier inversion, namely
        \[ p(x) = \frac{1}{2\pi}\int_\R e^{-i\lambda x}e^{-\Psi_L(\lambda)}\,d\lambda. \]
    As discussed in \autoref{sec: lbalphastab}, we will prove the equivalent condition that
        \[ \E{\exp\left(\int_0^t (c+\sigma_s)\,ds\right)p(-c-\sigma_t)\IndEvent{\sigma_t < \infty}} = p(-c).\]
    First observe that $\sigma$ is killed at rate $a$, independently of its other jumps, and so $\Prob{\sigma_t < \infty} = e^{-at}$. We can pull this factor out of the expectation and henceforth forget about the killing in $\sigma$: we aim to show that
        \begin{equation} e^{-at}\E{\exp\left(\int_0^t (c+\sigma_s)\,ds\right)p(-c-\sigma_t)} = p(-c). \label{eqn:dagger} 
        \end{equation}
    Using the Fourier representation of the density, the left-hand side of (\ref{eqn:dagger}) expands to
    \[ \frac{e^{-at}}{2\pi}\E{\int_\R\exp\left(i(c+\sigma_t)\lambda + \int_0^t(c+\sigma_s)\,ds\right)e^{-\Psi_L(\lambda)}\,d\lambda }.\]
    We would like to exchange the integral with the expectation --- this is valid provided the modulus of the integrand has finite double integral. To this end, we compute
    \begin{multline*}
        \E{\int_\R \left|\exp\left(i(c+\sigma_t)\lambda + \int_0^t (c+\sigma_s)\,ds\right)e^{-\Psi_L(\lambda)}\right|\,d\lambda}\\
        = \int_\R \left|e^{-\Psi_L(\lambda)}\right|\,d\lambda \times \E{\exp\left(\int_0^t (c+\sigma_s)\,ds\right)}.
    \end{multline*}
    We already know that the integral term is finite, so we just need to prove that the expectation is finite too. We can represent $\sigma_s = b^2s + J_s$, where $J$ is a pure jump subordinator. Then we have
    \[ \E{\exp\left(\int_0^t (c+\sigma_s)\,ds\right)} = \exp\left(ct + \frac 12b^2t^2\right)\E{\exp\left(\int_0^t J_s\,ds\right)}.\]
    We can apply Campbell's formula to the rightmost expectation: let $\Pi \subset \R^+ \times \R^+$ be the (atoms of) the Poisson process of jumps of $J$, so that
        \[ J_t = \sum_{\substack{(s, x) \in \Pi\\s \leq t}}x.\]
    Then $\int_0^t J_s\,ds = \sum_{s \leq t} (t-s)x$. It follows that
    \begin{align*}
        \E{\exp\left(\int_0^t J_s\,ds\right)} &= \exp\left(\int_0^\infty \int_0^t \left(e^{(t-s)x}-1\right)\,ds\,x\nu(dx)\right)\\
        &= \exp\left(\int_0^\infty \left(e^{tx}-1-tx\right)\,\nu(dx)\right).
    \end{align*}
    This quantity is finite by condition (i). Thus the exchange is valid, and so the left-hand side of (\ref{eqn:dagger}) is equal to
        \[ \frac{e^{-at}}{2\pi}\int_\R \E{\exp\left(i\lambda \sigma_t + \int_0^t \sigma_s\,ds\right)}\exp\left(c(t + i\lambda) - \Psi_L(\lambda)\right)\,d\lambda. \]
    Once again, we can decompose $\sigma_t = b^2t + J_t$, so that
    \[i\lambda \sigma_t + \int_0^t \sigma_s\,ds = \left(b^2i\lambda t + \frac 12b^2t^2\right) + \left(i\lambda J_t + \int_0^t J_s\,ds\right).\]
    Thus the LHS of (\ref{eqn:dagger}) further expands to
        \[ \frac{e^{-at}}{2\pi}\int_\R \E{\exp\left(i\lambda J_t + \int_0^t J_s\,ds\right)}\exp\left(b^2i\lambda t + \frac 12b^2t^2 + c(t+i\lambda) - \Psi_L(\lambda)\right)\,d\lambda.\]
    By another application of Campbell's formula, the inner expectation is
    \begin{align*}
        \E{\exp\left(i\lambda J_t + \int_0^t J_s\,ds\right)} &= \exp\left(\int_0^\infty \int_0^t \left(e^{i\lambda x + (t-s)x}-1\right)\,ds\,x\nu(dx)\right)\\
        &= \exp\left(\int_0^\infty \left(e^{i\lambda x}\left(\frac{e^{tx}-1}{x}\right)-t\right)\,x\nu(dx)\right)\\
        &= \exp\left(\int_0^\infty \left(e^{(t+i\lambda)x}-e^{i\lambda x} - tx\right)\,\nu(dx)\right),
    \end{align*}
    noting that all of these integrals are finite by condition (i).
    Thus, the left-hand side of (\ref{eqn:dagger}) can be written as
        \begin{equation*} \frac{1}{2\pi}\!\int_\R\! \exp\left(-at+\int_0^\infty \!\left(e^{(t+i\lambda)x}-e^{i\lambda x} - tx\right)\!\nu(dx) + b^2i\lambda t + \frac12b^2t^2 + c(t+i\lambda) - \Psi_L(\lambda)\right) d\lambda.\end{equation*}
    For brevity, let us denote by $H(t, \lambda)$ the expression inside the exponential. It is now useful to separate the drift and Brownian term in $\Psi_L$ from the (compensated) jumps: write 
        \[ \Psi_L(\lambda) = ai\lambda + \frac 12b^2\lambda^2 + \Psi_0(\lambda),\]
    where
        \[ \Psi_0(\lambda) = -\int_0^\infty (e^{i\lambda x} - 1 - i\lambda x)\,\nu(dx).\]
    Condition (i) implies that $\Psi_0$ has an analytic continuation to all of $\mathbb C$, given by the same formula. We now observe that
        \[ \int_0^\infty \left(e^{(t+i\lambda)x} - e^{i\lambda x} - tx\right)\,\nu(dx) = \Psi_0(\lambda) - \Psi_0(\lambda - it),\]
    and so we have
    \begin{align*}
        H(t, \lambda) &= -at + \Psi_0(\lambda) - \Psi_0(\lambda - it) + b^2i\lambda t + \frac12b^2t^2 + c(t+i\lambda) - \Psi_L(\lambda)\\
        &= -at - \Psi_0(\lambda - it) + b^2i\lambda t + \frac12b^2t^2 + c(t+i\lambda) - ai\lambda - \frac12b^2\lambda^2\\
        &= -ai(\lambda - it) - \frac 12b^2 (\lambda - it)^2 + ic(t - i\lambda) - \Psi_0(\lambda - it)\\
        &= ic(\lambda - it) - \Psi_L(\lambda - it).
    \end{align*}

    In summary, we have shown that
        \[ e^{-at}\E{\exp\left(\int_0^t (c+\sigma_s)\,ds\right)p(-c-\sigma_t)} = \frac{1}{2\pi}\int_\R \exp\left(ic(\lambda - it) - \Psi_L(\lambda - it)\right)\,d\lambda,\]
    and we would like to show that this common quantity is equal to $p(-c)$. By Fourier inversion we know
        \[ p(-c) = \frac 1{2\pi} \int_\R \exp(ic\lambda - \Psi_L(\lambda))\,d\lambda \]
    and so it remains only to prove that
        \begin{equation} 
        \int_\R \exp\left(ic(\lambda - it) - \Psi_L(\lambda - it)\right)\,d\lambda = \int_\R \exp\left(ic\lambda - \Psi_L(\lambda)\right)\,d\lambda. \label{eqn:star}
        \end{equation}
    To this end, set $G(z) = icz - \Psi_L(z)$, $z \in \mathbb C$. We can show that, for any closed polygonal contour $\gamma$, we have $\oint_\gamma G(z)\,dz = 0$ --- this is by Fubini's theorem, which is valid due to the condition that $\int_0^\infty (x \wedge x^2) \,\nu(dx) < \infty$. Thus by Morera's theorem $G$ is holomorphic on $\mathbb C$, so $\exp(G(z))$ is holomorphic too.

    Thus for any closed contour $\gamma$ in the complex plane, we have $\oint_\gamma \exp(G(z))\,dz = 0$. We consider the rectangular contour with vertices at $\pm R$, $\pm R - it$ and then take $R \to \infty$. Observe that
        \begin{multline*} \oint_\gamma \exp(G(z))\,dz = \int_{-R}^R \exp(ic(\lambda - it) - \Psi_L(\lambda - it))\,d\lambda - \int_{-R}^R \exp(ic\lambda - \Psi_L(\lambda))\,d\lambda\\
        + i\int_0^t \exp(G(R - is))\,ds - i\int_0^t \exp(G(-R-is))\,ds, 
        \end{multline*}
    and so we will obtain (\ref{eqn:star}) if we can show that the final two integrals tend to 0 as $R \to \infty$.

    To this end, we will show that the integrands of these two integrals tend to 0 uniformly: that is, for $z = \lambda - is$, $s \in [0, t]$ we have $\exp(G(z)) \to 0$ uniformly in $s$ as $|\lambda| \to \infty$. As $|\exp(G(z))| = \exp(\Realpart{G(z)})$ it suffices to show $\Realpart{G(z)} \to -\infty$ uniformly, and since $s$ is bounded we can further reduce this to showing that $\Realpart{\Psi_L(z)} \to +\infty$ uniformly. It is straightforward to calculate
        \[ \Realpart{\Psi_L(\lambda - is)} = as - \frac 12b^2s^2 + \frac 12\lambda^2 b^2 - \int_0^\infty\left(e^{sx}\cos(|\lambda|x) - 1 - sx\right)\,\nu(dx).\]
    Note that the term $as - \frac 12b^2s^2$ is bounded. We can decompose the integral as
    \begin{multline*}
        -\int_0^\infty (e^{sx}\cos(|\lambda|x) - 1 - sx)\,\nu(dx) \\= \int_0^\infty e^{sx}(1-\cos(|\lambda|x))\nu(dx)
        - \int_0^\infty(e^{sx}-1-sx)\,\nu(dx).
    \end{multline*}
    The second term is bounded by condition (i), while the first term is lower bounded by removing the $e^{sx}$ term. Hence we have
        \[ \Realpart{\Psi_L(\lambda - is)} \geq \frac 12\lambda^2b^2 + \int_0^\infty (1-\cos(|\lambda|x))\,\nu(dx) + O(1) \]
    and now we recognise that
        \[ \frac 12\lambda^2 b^2 + \int_0^\infty (1-\cos(|\lambda|x))\,\nu(dx) = \Realpart{\Psi_L(\lambda)} \to \infty,
        \]
    by condition (ii). This completes the proof.
\end{proof}

\polyaurncvg*
\begin{proof}
    Write $X_n = \frac{A_n}{M_n}$ and $Y_n = \frac{\#\{i \leq n : A_i \neq A_{i-1}\}}{n}$ for brevity, and define a filtration
        \[ \mathcal F_n = \sigma((M_j)_{j \geq 0}, (A_i)_{0 \leq i \leq n}). \]
    A straightforward calculation reveals that $X_n$ is an $(\mathcal F_n)$--martingale. We also have $X_n \in [0, 1]$ for all $n$, so the martingale is uniformly integrable and thus converges almost surely to some $X_\infty$. Now let
        \[ S_n = \sum_{i=1}^n \left(\IndEvent{A_i \neq A_{i-1}} - \frac{A_{i-1}}{M_{i-1}}\right). \]
    Another calculation reveals that $(S_n)_{n \geq 0}$ is also an $(\mathcal F_n)$--martingale. Since
        \[ Y_n = \frac{S_n}{n} + \frac{1}{n}\sum_{i=1}^n \frac{A_{i-1}}{M_{i-1}}, \]
    and we already know $\frac{1}{n}\sum_{i=1}^n \frac{A_{i-1}}{M_{i-1}} \to X_\infty$ almost surely, it suffices to show $\frac{S_n}{n} \to 0$.

    By the Azuma--Hoeffding inequality (noting that the increments of $S_n$ all have modulus at most 1) we obtain, for each $\varepsilon > 0$,
        \[ \Prob{|S_n| \geq \varepsilon n} \leq 2\exp\left(- \frac12 \varepsilon^2 n \right), \]
    which is summable in $n$. Hence, almost surely, we have $|S_n/n| < \varepsilon$ for all but finitely many $n$. Repeating for, say, each $\varepsilon = 1/k$, we see $S_n/n \to 0$ almost surely.
\end{proof}

\sbppconvergence*
\begin{proof}
    Note that it is sufficient to prove the convergence in $D([0,T])$ for arbitrary $T > 0$. So fix $T > 0$. We begin by handling the case that $\mathbf{X}^{(n)}$ and $\mathbf X$ are deterministic, say $\mathbf{X}^{(n)} = x^{(n)}$ and $\mathbf X = x$. We can construct $S^{(n)}$ and $S$ on the same probability space by using the same $E_i$ for all of them. In this setting we will prove that $S^{(n)} \to S$ in probability with respect to the Skorokhod metric on $[0, T]$. We introduce an intermediate sequence of processes $R^{(n)}$ defined by
    \[ R^{(n)}(t) = \sum_{i=1}^\infty x_i \IndEvent{E_i \leq x_i^{(n)}t}. \]
    Let $d_D$ be the Skorokhod distance on $[0, T]$. We will show that $d_D(S^{(n)}, R^{(n)}) \to 0$ in probability and $d_D(R^{(n)}, S) \to 0$ in probability.

    For the first of these statements, we will show convergence in probability with respect to the uniform norm (which implies the desired result). For any $t \in [0, T]$ we have
\begin{align*}
    |S^{(n)}(t) - R^{(n)}(t)| &\leq \sum_{i=1}^{\infty} |x_i^{(n)} - x_i|\IndEvent{E_i \leq x_i^{(n)}t}\\
    &\leq \sum_{i=1}^{\infty} |x_i^{(n)} - x_i|\IndEvent{E_i \leq x_i^{(n)}T}.
\end{align*}
This bound does not depend on $t$ and so 
\begin{align*}
    \E{\sup_{t \in [0, T]}|S^{(n)}(t) - R^{(n)}(t)|} &\leq \sum_{i=1}^\infty |x_i^{(n)} - x_i|\left(1 - e^{-x_i^{(n)}T}\right)\\
    &\leq T\sum_{i=1}^\infty |x_i^{(n)} - x_i| \cdot x_i^{(n)}\\
    &\leq T ||x^{(n)} - x||_2 \cdot ||x^{(n)}||_2.
\end{align*}
Now $x^{(n)}$ is bounded in $\ell_2$ and $x^{(n)} - x \to 0$ in $\ell_2$, so this expectation tends to zero. In particular, by Markov's inequality we have, for any $\varepsilon > 0$,
    \[ \Prob{\sup_{t \in [0, T]}|S^{(n)}(t) - R^{(n)}(t)| > \varepsilon} \to 0, \]
establishing the first convergence in probability.

For the second convergence, some more care is needed. Let $\varepsilon > 0$ and $\delta > 0$ be given. We will show that for sufficiently large $n$ we have $\Prob{d_D(R^{(n)}, S) > \varepsilon} < \delta$. We introduce a new notation $||\cdot||_{2, N}$ defined by
    \[ ||u||_{2, N} = \sqrt{\sum_{i = N+1}^\infty u_i^2}. \]
We can interpret this as the $\ell_2$-norm of the projection of $u$ onto the subspace of codimension $N$ spanned by the $i$th unit vectors for $i > N$. Note that for any $u \in \ell_2$ we have $||u||_{2, N} \to 0$ as $N \to \infty$. For any $N$ and any $n$ we can estimate
\begin{align*}
    &\E{\left|\sum_{i=N+1}^\infty x_i \left(\IndEvent{E_i \leq x_i^{(n)}T} + \IndEvent{E_i \leq x_i T}\right)\right|} \\
    &\qquad\qquad\leq \sum_{i=N+1}^\infty x_i \left(\Prob{E_i \leq x_i^{(n)} T} + \Prob{E_i \leq x_i T}\right)\\
    &\qquad\qquad\leq T\sum_{i=N+1}^\infty x_i(x_i^{(n)} + x_i)\\
    &\qquad\qquad\leq T\left(||x||_{2, N}^2 + ||x||_{2, N} \cdot ||x^{(n)}||_2\right)\\
    &\qquad\qquad\leq T||x||_{2, N}\left(||x||_2 + ||x^{(n)}||_2\right)\\
    &\qquad\qquad\leq C||x||_{2, N},
\end{align*}
where $C$ is an absolute constant (here we are using the fact that $x^{(n)}$ is bounded in $\ell_2$). This bound does not depend on $n$, and goes to zero as $N \to \infty$. By Markov's inequality we can choose $N$ large enough such that
    \[ \Prob{\left|\sum_{i=N+1}^\infty x_i \left(\IndEvent{E_i \leq x_i^{(n)}T} + \IndEvent{E_i \leq x_i T}\right)\right| \leq \varepsilon} \geq 1 - \frac 12\delta \]
for every $n \geq 1$. Let $A_1^{(n)}$ denote this event.

Having chosen $N$, we define truncated processes
    \[ \widetilde R^{(n)}(t) = \sum_{i = 1}^N x_i\IndEvent{E_i \leq x_i^{(n)}t} \quad \text{ and } \quad \widetilde S(t) = \sum_{i = 1}^N x_i \IndEvent{E_i \leq x_i t}.\]
Let $B$ be the event that $\widetilde S$ does not jump at time $T$, and note that this has probability 1. Let $K$ be the (random) number of times $\widetilde S$ jumps before time $T$, and let $\pi : [K] \to [N]$ be the unique injection such that these jumps occur at times
    \[ \frac{E_{\pi(1)}}{x_{\pi(1)}} < \frac{E_{\pi(2)}}{x_{\pi(2)}} < \dots < \frac{E_{\pi(K)}}{x_{\pi(K)}}. \]
Let $A_2^{(n)}$ be the event that $\widetilde R^{(n)}$ also has $K$ jumps before time $T$ and that these are given by
    \[ \frac{E_{\pi(1)}}{x_{\pi(1)}^{(n)}} < \frac{E_{\pi(2)}}{x_{\pi(2)}^{(n)}} < \dots < \frac{E_{\pi(K)}}{x_{\pi(K)}^{(n)}}, \]
using the same permutation $\pi$. We observe that on the full measure event $B$, the events $A_2^{(n)}$ necessarily occur for all sufficiently large $n$, and so $\Prob{A_2^{(n)}} \to 1.$ On the event $A_2^{(n)}$ we may construct a time change $\lambda^{(n)} : [0, T] \to [0, T]$ as follows: we take the minimal piecewise linear function such that $\lambda^{(n)}(0) = 0$, $\lambda^{(n)}(T) = T$ and
    \[ \lambda^{(n)}\left(\frac{E_{\pi(i)}}{x_{\pi(i)}}\right) = \frac{E_{\pi(i)}}{x_{\pi(i)}^{(n)}} \]
for $i = 1, 2, \dots, K$. Observe that
    \[ \sup_{t \in [0, T]}\left|\lambda^{(n)}(t) - t\right| \leq \max_{i \leq K}\left|\frac{E_{\pi(i)}}{x_{\pi(i)}} - \frac{E_{\pi(i)}}{x_{\pi(i)}^{(n)}}\right| \]
which converges to 0 almost surely (on the event that $\lambda^{(n)}$ is defined). Letting $A_3^{(n)}$ be the event that $\sup_{t \in [0, T]}\left| \lambda^{(n)}(t) - t\right| \leq \varepsilon$, we see that $\Prob{A_3^{(n)}} \to 1$. Note that $\widetilde R^{(n)}(\lambda^{(n)}(t)) = \widetilde S(t)$ for all $t \in [0, T]$. Now we can compute 
\begin{align*}
    &\sup_{t \in [0, T]} |R^{(n)}(\lambda^{(n)}(t)) - S(t)| \\
    &\qquad\qquad= \sup_{t \in [0, T]}\left|\sum_{i = 1}^\infty x_i\left(\IndEvent{E_i \leq x_i^{(n)}\lambda^{(n)}(t)} - \IndEvent{E_i \leq x_i t}\right)\right|\\
    &\qquad\qquad= \sup_{t \in [0, T]}\left|\widetilde R^{(n)}(\lambda^{(n)}(t)) - \widetilde S(t) + \sum_{i = N+1}^\infty x_i\left(\IndEvent{E_i \leq x_i^{(n)}\lambda^{(n)}(t)} - \IndEvent{E_i \leq x_i t}\right)\right|\\
    &\qquad\qquad= \sup_{t \in [0, T]} \left| \sum_{i = N+1}^\infty x_i\left(\IndEvent{E_i \leq x_i^{(n)}\lambda^{(n)}(t)} - \IndEvent{E_i \leq x_i t}\right) \right|\\
    &\qquad\qquad\leq \left| \sum_{i = N+1}^\infty x_i \left(\IndEvent{E_i \leq x_i^{(n)}T} + \IndEvent{E_i \leq x_i T}\right)\right|.
\end{align*}
On the event $A_1^{(n)}$ this quantity is at most $\varepsilon$. Thus on the event $A^{(n)} := A_1^{(n)} \cap A_2^{(n)} \cap A_3^{(n)}$ we have $d_D(R^{(n)}, S) \leq \varepsilon$. Note that $\liminf_{n \to \infty} \Prob{A^{(n)}} \geq 1 - \frac12 \delta$, and so for sufficiently large $n$ we have $\Prob{A^{(n)}} \geq 1 - \delta$. Thus $R^{(n)} \to S$ in probability in the Skorokhod metric.

This completes the proof in the case that $\mathbf{X}^{(n)}$ and $\mathbf X$ are deterministic. For the general case, we use Skorokhod's representation theorem again, this time to construct $\mathbf{X}^{(n)}$ and $\mathbf X$ on the same space such that $\mathbf{X}^{(n)} \to \mathbf X$ almost surely. Constructing our exponential random variables also on this space, independent of everything else, and using them to define all processes $S^{(n)}$ and $S$, we deduce from the previous case that for any fixed $\varepsilon > 0$ we have
    \[ \Prob{d_D(S^{(n)}, S) \geq \varepsilon \given\Big \mathbf{X}^{(n)}, \mathbf X} \to 0 \text{ almost surely.}\]
By the bounded convergence theorem we may take expectations of the above to see that
    \[ \Prob{d_D(S^{(n)}, S) \geq \varepsilon} \to 0.\]
As $\varepsilon$ was arbitrary we see that $S^{(n)} \to S$ in probability for this coupling. In particular, the laws of $S^{(n)}$ converge weakly to that of $S$.
\end{proof}

\section*{Acknowledgements}

We are grateful to Andreas Kyprianou for a very helpful suggestion in the early stages of the project, and to Minmin Wang for a discussion of \cite{minmin}. LH's work is supported by the EPSRC Centre for
Doctoral Training in the Mathematics of Random Systems: Analysis, Modelling, and Simulation (EP/S023925/1). Some of the work was carried out while CG
was in residence at the Simons Laufer Mathematical Sciences Institute in Berkeley, California, during the Spring 2025 semester, which LH also visited for a week with funding from the Institute. They gratefully acknowledge the financial support of the National Science Foundation under Grant No.\ DMS-1928930.


\addcontentsline{toc}{chapter}{Bibliography}
\bibliography{refs}        
\bibliographystyle{plain}  

\end{document}